\theoremstyle{plain}
\newtheorem{thm}{Theorem}[section]
\newtheorem{prop}[thm]{Proposition}
\newtheorem{lem}[thm]{Lemma}
\theoremstyle{definition}
\theoremstyle{remark}
\newtheorem{rmk}[thm]{Remark}
\newcommand*\R{\mathbb{R}}
\newcommand*\D{\mathrm{d}}
\DeclarePairedDelimiterX\set[2]\lbrace\rbrace{#1\;\delimsize\vert\;#2}
\newcommand*\eqdef{\overset{\mbox{\tiny{def}}}{=}}
\newcommand*\mr{\mathring}
\DeclareMathOperator{\dvol}{dvol}
\DeclareMathOperator{\supp}{supp}
\newcommand*\Lebw{\mathcal{L}}
\newcommand*\Sobw{\mathcal{W}}
\numberwithin{equation}{section}
\begin{document}
\title[Global GNS inequalities and applications]{Global versions of Gagliardo-Nirenberg-Sobolev inequality and applications to wave and Klein-Gordon equations}
\author{Leonardo Abbrescia}
\address{Michigan State University, East Lansing, Michigan, USA}
\email{abbresci@msu.edu}
\author{Willie Wai Yeung Wong}
\address{Department of Mathematics, Michigan State University, East Lansing, Michigan, USA}
\email{wongwwy@math.msu.edu}
\thanks{WWY Wong is supported by a Collaboration Grant from the Simons Foundation, \#585199.}

\begin{abstract}
	We prove global, or space-time weighted, versions of the Gagliardo-Nirenberg interpolation inequality, with $L^p$ ($p < \infty$) endpoint, adapted to a hyperboloidal foliation. The corresponding versions with $L^\infty$ endpoint was first introduced by Klainerman and is the basis of the classical vector field method, which is now one of the standard techniques for studying long-time behavior of nonlinear evolution equations. 
	We were motivated in our pursuit by settings where the vector field method is applied to an energy hierarchy with growing higher order energies. 
	In these settings the use of the $L^p$ endpoint versions of Sobolev inequalities can allow one to gain essentially one derivative in the estimates, which would then give a corresponding gain of decay rate.
	The paper closes with the analysis of one such model problem, where our new estimates provide an improvement. 
\end{abstract}

\maketitle
\tableofcontents

\section{Introduction} \label{sec:intro}
We are led to the subject of the present manuscript, which are weighted $L^2$--$L^p$ type Sobolev estimates adapted to hyperboloidal foliations, through our previous work on the stability of travelling wave solutions to the membrane equation \cite{Planewaves}. 
A feature of our argument is the use of an \emph{energy hierarchy}, where higher-order energies that control the higher order derivatives of the unknown with respect to space-time weighted vector fields are allowed to grow, in time, with rate of growth depending on the number of derivatives taken. 
Such a hierarchy appears necessary due to the large (in fact infinite energy) background solution causing the equations for higher order weighted derivatives to have coefficients that are themselves growing in time. While we were able to successfully study the problem there for all spatial dimensions $d \geq 3$, the case with $d = 2$ eluded our analysis. 

The difficulty, as we understood it, stems from the interaction of the global Sobolev inequalities with the energy hierarcy. 
The standard argument, using the energy method, for either the stability problem or the local existence problem for quasilinear waves, handles the nonlinearities with the general prescription of ``putting the highest order derivative factor in $L^2$ and the remainder in $L^\infty$.'' The $L^\infty$ term is then controlled by a \emph{higher order} $L^2$ integral using some version of the Sobolev inequality.

The use of the $L^2$--$L^\infty$ Sobolev inequality naturally introduces some amount of inefficiency.
A poignant example occurs in dimension $d = 2$. Using only the $L^\infty$ type Sobolev estimates we can bound
\[ \| u^2 \|_{L^2(\R^2)} \leq \|u\|_{L^\infty(\R^2)} \|u\|_{L^2(\R^2)} \lesssim \|u\|_{H^2(\R^2)} \|u\|_{L^2(\R^2)}.\]
(Scaling would have given us the first factor of $u$ in $H^1$, but as we know the end-point Sobolev embedding in $L^\infty$ is false.)
Using $L^p$ type Sobolev inequalities instead we can appeal to Ladyzhenskaya's inequality to get
\[ \|u^2\|_{L^2(\R^2)} \lesssim \|u\|_{H^1(\R^2)} \|u\|_{L^2(\R^2)}\]
for a gain of one derivative. (Or rather, one should think of this as the $L^\infty$ Sobolev inequality \emph{losing} one derivative.)

For classical applications where all orders of energies are typically bounded, this derivative loss is of no consequence, except in the need of working with higher regularity initial data. In \cite{Planewaves}, however, higher energies are allowed to \emph{grow}. This type of derivative loss will then be accompanied by a loss of decay of the solution, which can severely impact whether the estimates are closable, especially in the even dimensions. 
Exactly such a difficulty seems to be happening when we tried extending our analysis in \cite{Planewaves} from the case of spatial dimension $d \geq 3$ to the case of spatial dimension $d = 2$.
The main difficulty arises in the analysis of the quasilinear terms; we will not discuss precisely this difficulty in the present paper, in view of other technical complications for dealing with quasilinear equations. 
At the end of this paper, we will however give a flavor of the improvements one can obtain by showing how the use of $L^2$--$L^p$ type, global Gagliardo-Nirenberg-Sobolev inequalities can improve the analysis of a $d = 2$ semilinear model problem.

\section{The global GNS inequalities} \label{sec:GGNS}
The goal of this section is to develop certain weighted Gagliardo-Nirenberg-Sobolev (GNS) inequalities. 
These inequalities can be considered as being adapted to suitably weighted energy integrals associated to studying the linear wave and Klein-Gordon equations using a hyperboloidal foliation. 
The Morrey versions of these inequalities, which give $L^\infty$ control based on $L^2$ integrals of higher derivatives, have been previously described in \cite{LeFloch} and \cite{Wong2017}. 
Our results can be viewed as the counterpart to this theory extended to weighted $L^p$ based Sobolev spaces. 

Keeping in mind the expectation that these integrals will be viewed as being adapted to a hyperboloidal foliation, we will set our notation accordingly. 
By $\Sigma_\tau$ we refer to the hyperboloid in $\R^{1+d}$ given by 
\begin{equation}
	\Sigma_\tau \eqdef \{t^2 - |x|^2 = \tau^2, t > 0\}.
\end{equation}
We can parametrize it by $\R^d$ via the map
\begin{equation}
	(x^1, \ldots, x^d) \mapsto ( t =  \sqrt{\tau^2 + |x|^2}, x^1, \ldots, x^d) \in \R^{1+d}.
\end{equation}
For convenience throughout we will denote by
\begin{equation}
	w_\tau(x) \eqdef \sqrt{\tau^2 + |x|^2}, \quad x\in \R^d.
\end{equation}
We note that the value of $w_\tau$, when thinking of $\Sigma_\tau$ as embedded in $\R^{1+d}$, of course agrees with the value of the $t$ coordinate; we use the notation $w_\tau$ as mental aid to work intrinsically on $\Sigma_\tau$ whenever appropriate. 

The Minkowski metric on $\R^{1+d}$ induces a Riemannian metric on $\Sigma_{\tau}$, which is given by the matrix-valued function 
\begin{equation}
	g_{ij} = \delta_{ij} - \frac{x^i x^j}{w_\tau(x)^2}
\end{equation}
relative to the parametrization above. This being a rank-1 perturbation of the Euclidean metric, the corresponding volume form can be easily computed to be
\begin{equation}\label{eq:volformweight}
	\dvol = \frac{\tau}{w_\tau} \D x^1 \wedge \cdots \wedge \D x^d.
\end{equation}

The Minkowski space $\R^{1+d}$ admits as Killing vector fields the Lorentzian boosts, given as
\begin{equation}
	L^i \eqdef x^i \partial_t + t \partial_{x^i}.
\end{equation}
These vector fields are \emph{tangent} to the hypersurfaces $\Sigma_\tau$ for every $\tau > 0$, and in the parametrization above can be identified with 
\begin{equation}
	L^i \cong w_\tau \partial_{x^i}.
\end{equation}
We remark that 
\[ L^i w_\tau = x^i, \quad L^i x^i = w_\tau.\]
In particular, we have that for any string of derivatives
\begin{equation}\label{eq:weightder}
\left\lvert L^{i_1} \cdots L^{i_K} w_\tau \right\rvert \leq w_\tau.
\end{equation}

\subsection{The basic global GNS inequalities} 

The Nirenberg argument \cite{Nirenberg59} is built upon the fundamental theorem of calculus. Given a point $x\in \R^d$, we will write
\[ x'_i(s) \eqdef (x^1, x^2, \ldots, x^{i-1}, s, x^{i+1}, \ldots, x^d) \]
as the point where the $i$th coordinate of $x$ is replaced by the real parameter $s$. Then the fundamental theorem of calculus states that, for any smooth, compactly supported function $u$,
\begin{equation}
	|u(x)| \leq \int_{-\infty}^{x^i} | \partial_i u(x'_i(s))| ~\D s \leq \int_{-\infty}^\infty \frac{1}{w_\tau\circ x'_i(s)}| L^i u(x'_i(s))|\ \D s.
\end{equation}
This implies
\begin{equation}
	|u(x)|^{\frac{d}{d-1}} \leq \prod_{i = 1}^d \left(\int_{\R} \frac{|L^i u(x'_i(s))|}{w_\tau\circ x'_i(s)}\ \D s \right)^{\frac{1}{d-1}}.
\end{equation}
Now, integrating the left hand side and applying H\"older's inequality (exactly as in \cite{Nirenberg59}) this implies (noting that the volume form is weighted according to \eqref{eq:volformweight})
\begin{equation}\label{eq:mainGNS}
	\tau^{\frac{1}{d-1}} \int_{\Sigma_\tau}  w_\tau(x) |u(x)|^{\frac{d}{d-1}} \ \dvol \leq \prod_{i = 1}^d \left( \int_{\Sigma_\tau} |L^i u(x)| \ \dvol \right)^{\frac{1}{d-1}}. \tag{GNS\textsubscript{1}}
\end{equation}
The extra factor of $\tau$ comes from the $\dvol$ that appears different number of times on the two sides. 
Taking advantage of \eqref{eq:weightder} which shows that we have really an exponential-type weight, \eqref{eq:mainGNS} implies the following arbitrarily-weighted counterpart. For any $\alpha\in \R$,  
\begin{equation}\label{eq:wtGNS}
	\tau^{\frac{1}{d-1}} \int_{\Sigma_\tau} w_\tau^{1 + \alpha\cdot \frac{d}{d-1}} |u(x)|^{\frac{d}{d-1}} \ \dvol \leq \prod_{i = 1}^d \left( \int_{\Sigma_\tau} w_\tau^\alpha |L^i u| + |\alpha| w_\tau^\alpha |u| \ \dvol \right)^{\frac{1}{d-1}}. \tag{GNAWS\textsubscript{1}}
\end{equation}
(This last inequality follows by replacing $u \mapsto w_\tau^\alpha u$ in \eqref{eq:mainGNS}.)

In view of the form of the inequalities, we will introduce the following notations for weighted Sobolev spaces on $\Sigma_\tau$:
\begin{itemize}
	\item For $p\in [1,\infty)$ and $\alpha\in\R$, by $\Lebw^p_{\alpha}$ we refer to the weighted Lebesgue norm
		\[ \|u\|_{\Lebw^p_\alpha} = \left( \int w_\tau^\alpha |u|^p \ \dvol \right)^{1/p}. \]
	\item For $p\in [1,\infty)$, $\alpha\in\R$, and $k\in \mathbb{N}$, by $\mr\Sobw^{k,p}_\alpha$ we refer to the weighted homogeneous Sobolev norm
		\[ \|u\|_{\mr\Sobw^{k,p}_\alpha} = \sum_{i_1, \ldots, i_k = 1}^d \| L^{i_1} \cdots L^{i_k} u\|_{\Lebw^p_\alpha}.\]
		The corresponding inhomogeneous version $\Sobw^{k,p}_\alpha$ is 
		\[ \|u\|_{\Sobw^{k,p}_\alpha} = \sum_{j = 0}^k \|u\|_{\mr\Sobw^{j,p}_\alpha}.\]
\end{itemize}
So \eqref{eq:wtGNS} asserts the continuous embedding $\Sobw^{1,1}_\alpha \hookrightarrow \Lebw^{d/(d-1)}_{\alpha d / (d-1) + 1}$. 

\begin{rmk}\label{rmk:energycontrol}
	To foreshadow our discussion, notice that the standard $t$-energy of the linear wave equation (see \cite{Wong2017}) controls 
	\[  \tau^{-1} \|u\|^2_{\mr\Sobw^{1,2}_{-1}} + \tau \|\partial_t u\|^2_{\Lebw^2_{-1}}.\]
	On the other hand, the $t$-energy of the linear Klein-Gordon equation controls
	\[  \tau^{-1} \|u\|^2_{\mr\Sobw^{1,2}_{-1}} + \tau \|\partial_t u\|^2_{\Lebw^2_{-1}} + \tau^{-1} \|u\|^2_{\Lebw^2_1} \]
	(note the different weight on the final term).
\end{rmk}

Replacing $u$ by $u^q$, coupled with an application of H\"older's inequality, gives the standard extensions of \eqref{eq:mainGNS} and \eqref{eq:wtGNS} to $\Sobw^{1,p}_\alpha$. 
Let $1 \leq p < d$, we have
\begin{gather}
	\tau^{1/d} \|u\|_{\Lebw^{dp/(d-p)}_1} \lesssim \|u\|_{\mr\Sobw^{1,p}_{1-p}}, \tag{GNS\textsubscript{p}} \label{eq:pGNS}\\
	\tau^{1/d} \|u\|_{\Lebw^{dp/(d-p)}_{1+ \alpha d p / (d-p)}} \lesssim \|u\|_{\Sobw^{1,p}_{1-p + \alpha p}}. \label{eq:wtpGNS} \tag{GNAWS\textsubscript{p}}
\end{gather}
Iterating \eqref{eq:wtpGNS} above, we also have as a corollary that, given $k\in \mathbb{N}$ and $p\in [1,\infty)$ such that $kp < d$, for any $\beta\in \R$, 
\begin{equation}\label{eq:wtkGNS}
	\tau^{k/d} \|u\|_{\Lebw^{q}_{1 - q + q(\beta+k)}} \lesssim \|u\|_{\Sobw^{k,p}_{1-p + p\beta}}, \tag{GNAWS\textsubscript{pk}}
\end{equation}
where $q = dp / (d - kp)$ is the usual Sobolev conjugate of $p$. 
We note that the case $\beta + k = 1$ is essentially a re-formulation of the standard Gagliardo-Nirenberg-Sobolev inequality on $\R^d$. 

\begin{rmk}\label{rmk:morrey}
	Notice that formally setting $p = 2$, $k = d/2$, and $\beta = 0$, one sees that \eqref{eq:wtkGNS} has the correct scaling for an inequality of the type
	\[ \tau^{1/2} \|w_\tau^{d/2 - 1} u\|_{L^\infty} \quad \text{``} \lesssim \text{''} \quad  \|u\|_{\Sobw^{d/2,2}_{-1}}. \]
 	This inequality, as we know, is not true, due to the failure of the end-point Sobolev inequality into $L^\infty$. 
	On the other hand, the (Morrey-type) global Sobolev inequality as stated and proved in \cite{Wong2017} can be restated in the following form
	\begin{equation}
	\tau^{1/2} \|w_\tau^{d/2 - 1}u \|_{L^\infty} \lesssim \|u\|_{\Sobw^{\lfloor d/2 \rfloor + 1, 2}_{-1}}. \label{Morrey}
	\end{equation}
\end{rmk}

\subsection{Interpolating inequalities: non-borderline case}

The inequalities \eqref{eq:pGNS} and \eqref{eq:wtpGNS} represent the endpoint Sobolev embeddings, when $p < d$, in our setting. 
In this section we prove Gagliardo-Nirenberg type interpolation inequalities. 
For simplicity we will focus on the case of 1 derivative: that is, we examine embeddings of the form 
\[ \Sobw^{1,p}_\alpha \cap \Lebw^q_\beta \hookrightarrow \Lebw^r_\gamma \]
with $q \leq r \leq dp/(d-p)$. The case of higher derivatives, based on \eqref{eq:wtkGNS}, is analogous and left to the reader. For convenience we denote $p^* \eqdef \tfrac{dp}{d - p}$ as the Sobolev conjugate of $p$.

\begin{prop}
	Given $q \leq r \leq p^*$, and let $\theta \in [0,1]$ satisfy
\[ \frac{1}{r} = \frac{\theta}{q} + \frac{1-\theta}{p^*}.\]
Then the following inequalities hold for any $\alpha,\beta \in \R$:
\begin{align}
\tau^{(1-\theta)/d} \| u\|_{\Lebw_{1 + \theta \beta r}^r} & \lesssim \left(\| u\|_{\Lebw_{1+\beta q}^q}\right)^{\theta } \cdot\left( \| u\|_{\mr \Sobw_{1-p}^{1,p}}\right)^{1-\theta} \tag{GNS{\textsubscript{pqr}}}  \label{eq:pqrGNS}, \\
\tau^{(1-\theta)/d} \| u\|_{\Lebw_{1 +  (\theta \beta + (1-\theta) \alpha)\cdot r}^r} & \lesssim\left( \| u\|_{\Lebw_{1+\beta q}^q}\right)^{\theta }\cdot \left( \|u\|_{\Sobw_{1 - p +\alpha p}^{1,p}}\right)^{1-\theta}. \tag{GNAWS\textsubscript{pqr}} \label{eq:wtpqrGNS}
\end{align}
\end{prop}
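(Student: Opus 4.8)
The plan is to obtain \eqref{eq:pqrGNS} by interpolating between the endpoint embedding \eqref{eq:pGNS} and the trivial identity embedding $\Lebw^q_\beta \hookrightarrow \Lebw^q_\beta$, using H\"older's inequality with the conjugate exponents dictated by $\theta$; the weighted version \eqref{eq:wtpqrGNS} then follows from \eqref{eq:pqrGNS} by the substitution $u \mapsto w_\tau^\alpha u$, exactly as \eqref{eq:wtGNS} was deduced from \eqref{eq:mainGNS}. So the heart of the matter is the unweighted inequality.

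First I would write, for the chosen $\theta$, $|u|^r = |u|^{\theta r} \cdot |u|^{(1-\theta) r}$ and apply H\"older on $\Sigma_\tau$ (with respect to $\dvol$) to the pair of functions $w_\tau^{c_1}|u|^{\theta r}$ and $w_\tau^{c_2}|u|^{(1-\theta)r}$, where the weight split $c_1 + c_2$ must reconstruct the target weight $1 + \theta\beta r$ and the H\"older exponents must be $\frac{q}{\theta r}$ and $\frac{p^*}{(1-\theta) r}$ — these are conjugate precisely because of the defining relation $\frac1r = \frac{\theta}{q} + \frac{1-\theta}{p^*}$. Matching the weights so that the first factor becomes $\|u\|_{\Lebw^q_{1+\beta q}}^{\theta r}$ and the second becomes $\|u\|_{\Lebw^{p^*}_{1}}^{(1-\theta)r}$ forces $c_1 = \frac{\theta r}{q}(1 + \beta q)$ and $c_2 = \frac{(1-\theta)r}{p^*}$, and one checks $c_1 + c_2 = \theta r \cdot \frac1q + \beta\theta r + (1-\theta) r \cdot \frac{1}{p^*} = 1 + \theta\beta r$ by the same relation. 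Taking $r$-th roots gives
\[
\|u\|_{\Lebw^r_{1+\theta\beta r}} \lesssim \left(\|u\|_{\Lebw^q_{1+\beta q}}\right)^{\theta} \left(\|u\|_{\Lebw^{p^*}_1}\right)^{1-\theta},
\]
and then \eqref{eq:pGNS} bounds $\|u\|_{\Lebw^{p^*}_1} \lesssim \tau^{-1/d}\|u\|_{\mr\Sobw^{1,p}_{1-p}}$, producing the factor $\tau^{(1-\theta)/d}$ on the left after rearranging. The condition $r \le p^*$ guarantees $\theta$ can be chosen in $[0,1]$, and $r \ge q$ is what makes the argument non-vacuous; the endpoint $r = p^*$ recovers \eqref{eq:pGNS} and $r = q$ is trivial.

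For \eqref{eq:wtpqrGNS}, replacing $u$ by $w_\tau^\alpha u$ in \eqref{eq:pqrGNS}: the left side becomes $\|w_\tau^\alpha u\|_{\Lebw^r_{1+\theta\beta r}} = \|u\|_{\Lebw^r_{1+\theta\beta r + \alpha r}}$, which I would rewrite with weight $1 + (\theta\beta + (1-\theta)\alpha)r + \theta\alpha r$; to land on the stated weight $1 + (\theta\beta + (1-\theta)\alpha)r$ I instead feed $w_\tau^\alpha u$ through more carefully, absorbing the $\alpha$ into a shifted $\beta' = \beta$ in the first factor and noting $\|w_\tau^\alpha u\|_{\Lebw^q_{1+\beta q}} = \|u\|_{\Lebw^q_{1+(\beta+\alpha/1)\cdot q}}$... — here one must bookkeep the exponents so that the two factors on the right become $\|u\|_{\Lebw^q_{1+\beta q}}^\theta$ and $\|u\|_{\Sobw^{1,p}_{1-p+\alpha p}}^{1-\theta}$. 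The Sobolev factor is handled as in the passage from \eqref{eq:mainGNS} to \eqref{eq:wtGNS}: $\|L^i(w_\tau^\alpha u)\|_{\Lebw^p_{1-p}} \lesssim \|u\|_{\Sobw^{1,p}_{1-p+\alpha p}}$ using $|L^i w_\tau^\alpha| \le |\alpha| w_\tau^\alpha$ from \eqref{eq:weightder} and the triangle inequality, which is exactly why the inhomogeneous $\Sobw^{1,p}_{1-p+\alpha p}$ (rather than $\mr\Sobw^{1,p}$) appears.

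The main obstacle — really the only delicate point — is the weight bookkeeping: one has to verify that the H\"older split of powers of $w_\tau$ is consistent with the target exponent on the left, and separately that the substitution $u \mapsto w_\tau^\alpha u$ threads the weights onto both factors on the right-hand side with exactly the exponents claimed. Both reduce to linear-algebra identities among $\theta, q, r, p^*, \alpha, \beta$ that follow from $\frac1r = \frac{\theta}{q} + \frac{1-\theta}{p^*}$ and $\frac{1}{p^*} = \frac1p - \frac1d$; I would carry these out explicitly once and cite \eqref{eq:weightder} for the commutator-type bound on $L^i w_\tau^\alpha$. Everything else is H\"older plus the already-established \eqref{eq:pGNS}.
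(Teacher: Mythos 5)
Your argument is essentially the paper's: the paper proves both inequalities by the elementary weighted H\"older interpolation of $\Lebw$-norms (its inequality \eqref{interpolate}) followed by the endpoint estimates \eqref{eq:pGNS}, \eqref{eq:wtpGNS}, and your derivation of \eqref{eq:pqrGNS} --- splitting $|u|^r=|u|^{\theta r}|u|^{(1-\theta)r}$, checking $c_1+c_2=1+\theta\beta r$ from $\tfrac1r=\tfrac{\theta}{q}+\tfrac{1-\theta}{p^*}$, then applying \eqref{eq:pGNS} to the $\Lebw^{p^*}_1$ factor --- is exactly that computation, and is correct. The one place you leave the argument dangling is \eqref{eq:wtpqrGNS}: as you noticed, substituting $u\mapsto w_\tau^\alpha u$ into \eqref{eq:pqrGNS} with the \emph{same} $\beta$ yields the left-hand weight $1+\theta\beta r+\alpha r$ rather than the stated one, and your sentence about ``absorbing $\alpha$ into a shifted $\beta'$'' trails off without the check. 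The fix is just the relabeling you gesture at: apply \eqref{eq:pqrGNS} with $\beta$ replaced by $\beta-\alpha$ to the function $w_\tau^\alpha u$. Then the first right-hand factor is $\|w_\tau^\alpha u\|_{\Lebw^q_{1+(\beta-\alpha)q}}=\|u\|_{\Lebw^q_{1+\beta q}}$, the left-hand weight becomes $1+\theta(\beta-\alpha)r+\alpha r=1+(\theta\beta+(1-\theta)\alpha)r$ as claimed, and the Sobolev factor is bounded by $\|u\|_{\Sobw^{1,p}_{1-p+\alpha p}}$ via Leibniz and \eqref{eq:weightder}, exactly as you describe (which is indeed why the inhomogeneous norm appears, with an implicit constant depending on $\alpha$). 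Equivalently one can interpolate directly between $\Lebw^q_{1+\beta q}$ and $\Lebw^{p^*}_{1+\alpha p^*}$ and invoke \eqref{eq:wtpGNS}; either route closes the remaining bookkeeping and completes the proof.
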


\begin{proof}
	The inequalities hold by applying the following elementary interpolation inequality of the weighted $\Lebw^p_\alpha$ spaces: for all $\theta \in [0,1]$,
	\begin{equation} \| u\|_{\Lebw_{\beta\theta + (1-\theta)\alpha}^r} \le \| u\|_{\Lebw_{\beta q/r}^q}^{\theta}\cdot \| u\|_{\Lebw_{\alpha p/r}^p}^{1-\theta}, \label{interpolate}
	\end{equation}
whenever
\[\frac{1}{r} = \frac{\theta}{q} + \frac{1-\theta}{p}.\] 
\end{proof}

\subsection{Interpolating inequalities: borderline case}

In the previous section we treated the interpolation inequalities when $p < d$. 
In this section we treat the interpolation inequalities when $p = d$. 
Specifically, we examine embeddings of the form
\[ \Sobw^{1,d}_\alpha \cap \Lebw^q_\beta \hookrightarrow \Lebw^r_\gamma \]
where now $1 \le q \leq r < \infty$. In view of our applications, the case $p = d = 2$ will be of specific interest. We occasionally abbreviate the Sobolev conjugate $1^* = d/(d-1)$.

\begin{prop}
	Let $q \leq r < \infty$, and $\beta\in \R$. Then 
	\begin{equation} \label{eq:pdrGNS} \tag{GNS\textsubscript{pdr}}
	\left(\tau^{1/d}\right)^{\frac{r-q}{r}} \| u\|_{\Lebw_{1 +\theta \beta  r}^r} \lesssim\left( \|u\|_{\Lebw_{1+\beta q}^q }\right)^{q/r} \cdot \left( \|u\|_{\mr \Sobw_{(1-d)(1 + \beta\theta r)}^{1,d}}\right)^{(r-q)/r},
\end{equation}
	where $\theta\in (0,1]$ is the solution to 
	\[ \frac{1}{r} = \frac{\theta}{q} + \frac{1-\theta}{r+1^*}.\]
\end{prop}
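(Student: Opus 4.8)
The plan is to run the standard Gagliardo--Nirenberg bootstrap adapted to these weighted norms. In outline: (i) feed a power $|u|^\gamma$ into the \emph{homogeneous} $p = 1$ endpoint inequality \eqref{eq:pGNS} to get a weighted bound for $u$ at the exponent $r + 1^*$; (ii) interpolate that bound against the $\Lebw^q$ norm to descend to $\Lebw^r$; (iii) absorb the $\Lebw^r$ factor that reappears on the right. For step~(i), since $r \ge q \ge 1$ we have $\gamma > 1$, so $|u|^\gamma$ is $C^1$ with compact support and, by a standard approximation argument, \eqref{eq:pGNS} with $p = 1$ applies to it: it bounds $\tau^{1/d} \bigl\| |u|^\gamma \bigr\|_{\Lebw_1^{1^*}}$ by $\sum_{i=1}^d \int \bigl| L^i(|u|^\gamma)\bigr|\,\dvol$. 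Using $\bigl| L^i(|u|^\gamma)\bigr| \le \gamma\, |u|^{\gamma - 1}\, |L^i u|$ almost everywhere, then splitting $|u|^{\gamma - 1} |L^i u| = \bigl( w_\tau^{a} |u|^{\gamma - 1}\bigr)\bigl( w_\tau^{-a} |L^i u|\bigr)$ for a free parameter $a \in \R$ and applying H\"older's inequality with the conjugate pair $(1^*, d)$, one arrives at
\begin{equation*}
	\tau^{1/d}\, \|u\|_{\Lebw_1^{\gamma 1^*}}^{\gamma} \lesssim_{d,\gamma} \|u\|_{\Lebw_{a 1^*}^{(\gamma - 1)1^*}}^{\gamma - 1} \cdot \|u\|_{\mr\Sobw_{-ad}^{1,d}}.
\end{equation*}
The crucial features are that no lower-order term appears, because the right side of \eqref{eq:pGNS} is the \emph{homogeneous} norm $\mr\Sobw^{1,1}$, and that H\"older deposits opposite weights $\pm a$ on the two factors from a single parameter.

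Next I would pin down the parameters. Choosing $\gamma = 1 + r/1^*$ gives $(\gamma - 1)1^* = r$ and $\gamma 1^* = r + 1^*$; choosing $a = (1 + \beta\theta r)/1^*$ gives $a 1^* = 1 + \beta\theta r$ and $-ad = (1-d)(1 + \beta\theta r)$, so the last display becomes
\begin{equation*}
	\|u\|_{\Lebw_1^{r+1^*}} \lesssim \tau^{-1/(\gamma d)}\, \|u\|_{\Lebw_{1+\beta\theta r}^{r}}^{(\gamma - 1)/\gamma} \cdot \|u\|_{\mr\Sobw_{(1-d)(1+\beta\theta r)}^{1,d}}^{1/\gamma}.
\end{equation*}
For step~(ii) I would invoke the elementary weighted interpolation \eqref{interpolate} with the exponent triple $(q, r, r + 1^*)$ in place of $(q, r, p)$; its balance condition is exactly the relation $\tfrac1r = \tfrac\theta q + \tfrac{1-\theta}{r+1^*}$ that defines $\theta$ in the statement. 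Choosing the two weight slots of \eqref{interpolate} so that the $\Lebw^q$ factor receives weight $1 + \beta q$ and the $\Lebw^{r+1^*}$ factor receives weight $1$, a short manipulation using the defining relation for $\theta$ shows that the weight produced on the left is precisely $1 + \beta\theta r$:
\begin{equation*}
	\|u\|_{\Lebw_{1+\beta\theta r}^{r}} \le \|u\|_{\Lebw_{1+\beta q}^{q}}^{\theta} \cdot \|u\|_{\Lebw_1^{r+1^*}}^{1-\theta}.
\end{equation*}

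Substituting the bound for $\|u\|_{\Lebw_1^{r+1^*}}$ into this interpolation inequality, the factor $\|u\|_{\Lebw_{1+\beta\theta r}^{r}}$ reappears on the right with exponent $(1-\theta)(\gamma - 1)/\gamma$, which is strictly below $1$ (since $\theta\gamma > 0 \ge \theta - 1$). As $u$ is smooth with compact support all the norms involved are finite, so this factor can be transferred to the left. Setting $\delta = 1 - (1-\theta)(\gamma-1)/\gamma = (1 + \theta(\gamma - 1))/\gamma$, one is left with
\begin{equation*}
	\|u\|_{\Lebw_{1+\beta\theta r}^{r}} \lesssim \tau^{-(1-\theta)/(\gamma d \delta)}\, \|u\|_{\Lebw_{1+\beta q}^{q}}^{\theta/\delta} \cdot \|u\|_{\mr\Sobw_{(1-d)(1+\beta\theta r)}^{1,d}}^{(1-\theta)/(\gamma\delta)},
\end{equation*}
and it remains only to check that the exponents collapse. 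Using the identity $1^*\gamma = r + 1^*$ together with the equivalent form $\theta = \tfrac{1^* q}{r(r + 1^* - q)}$ of the balance relation, one computes $\theta/\delta = q/r$, hence $(1-\theta)/(\gamma\delta) = (r-q)/r$ and $(1-\theta)/(\gamma d\delta) = \tfrac1d \cdot \tfrac{r-q}{r}$, so that $\tau^{-(1-\theta)/(\gamma d\delta)} = (\tau^{1/d})^{-(r-q)/r}$. Transposing this power of $\tau$ to the left yields \eqref{eq:pdrGNS}.

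The hard part is not any one computation but the compatibility of the matching in steps (i) and (ii): the single H\"older parameter $a$ must be set so that the $\Lebw^r$ norm manufactured by the interpolation carries the \emph{same} weight as the $\Lebw^r$ norm appearing inside step~(i) (otherwise the absorption in step~(iii) is illegitimate), and \emph{simultaneously} the Sobolev weight must emerge as the claimed $(1-d)(1 + \beta\theta r)$. That both requirements can be met, with no residual constraint relating $\beta$ to $q, r, d$ (in contrast to the two free weight parameters of \eqref{eq:wtpqrGNS} in the non-borderline case), is the one genuinely structural point, and it is dictated by the fact that the weights in \eqref{eq:pGNS} are pinned. A convenient consistency check on the bookkeeping is that the resulting inequality is invariant under the scaling $(x, \tau) \mapsto (\lambda x, \lambda\tau)$, $\lambda > 0$.
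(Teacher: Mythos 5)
Your proposal is correct and follows essentially the same route as the paper: applying the $p=1$ endpoint inequality to $|u|^{1+r/1^*}$, splitting the weight $w_\tau^{\pm(1+\beta\theta r)/1^*}$ inside H\"older with exponents $(1^*,d)$, interpolating via \eqref{interpolate} with the triple $(q,r,r+1^*)$, and absorbing the reappearing $\Lebw^r_{1+\beta\theta r}$ factor. The exponent bookkeeping ($\theta/\delta = q/r$, etc.) checks out and reproduces \eqref{eq:pdrGNS} exactly.
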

\begin{proof}
	Replacing $u \mapsto u^{1 + r/1^*}$ in \eqref{eq:mainGNS} implies
\begin{align*}
\tau^{1/d} \left( \int w_\tau |u|^{r + 1^*} \ \dvol \right)^{1/1^*} & \lesssim \sum_{i=1}^d \int |u|^{r/1^*} | L^i u| \ \dvol \\
								    & \lesssim \left( \int w_\tau^{1 + \theta\beta r} |u|^r \ \dvol\right)^{1/1^*} \cdot \| u\|_{\mr \Sobw_{(1-d)(1 + \beta\theta r)}^{1,d}}
\end{align*}
by H\"older's inequality. Here we used that 
\[1 = w_\tau^{(1+\theta\beta r)/1^*}\cdot w_\tau^{(-1-\theta\beta r)/1^*}. \] 
This in particular implies
\begin{equation} \label{interp}
	\tau^{1/(d-1)} \| u\|_{\Lebw_1^{r + 1^*}}^{r + 1^*} \lesssim \| u\|_{\Lebw_{1+\theta\beta r}^r}^{r} \| u\|_{\mr \Sobw_{(1-d)(1 + \theta\beta r)}^{1,d}}^{1^*}.
\end{equation}
We next interpolate using \eqref{interpolate} to find
\[\| u\|_{\Lebw_{1 + \theta \beta r}^r} \le \left( \|u\|_{\Lebw_{1+\beta q}^q}\right)^\theta \cdot \left( \|u\|_{\Lebw_1^{r + 1^*}}\right)^{1-\theta}.\]
Plugging \eqref{interp} in, cancelling the extra factors on both sides, we get the desired inequality after noting that $\theta$ is given by 
\[ \theta = \frac{1^* q}{r(1^* +r - q)},\qquad 1-\theta = \frac{(r-q)(r + 1^*)}{r(1^* + r - q)}.\]
\end{proof}

We note that when $\beta = 0$, the triple of weights
\[ (1 + \theta\beta r, 1 + \beta q, (1-d)(1 + \beta\theta r)) = (1,1,1-d).\]
Replacing $u\mapsto w_\tau^{\alpha} u$ we further have as a corollary 
\begin{equation}\label{eq:pdrGNAWS}
	\tag{GNAWS\textsubscript{pdr}}
	\left(\tau^{1/d}\right)^{\frac{r-q}{r}} \| u\|_{\Lebw_{1 +\theta \beta  r + \alpha r}^r} \lesssim\Bigl( \|u\|_{\Lebw_{1+\beta q + \alpha q}^q }\Bigr)^{q/r} \Bigl( \|u\|_{\Sobw_{(1-d)(1 + \beta\theta r) + \alpha d}^{1,d}}\Bigr)^{(r-q)/r}.
\end{equation}

\section{Linear estimates}
\label{sect:lin:est}

In this section we apply our results to obtain $\Lebw_*^r$ bounds by $\Lebw_*^2$ integrals that occur as part of the conserved energy for the linear wave and Klein-Gordon equations. 
As we will see there is often more than one way to obtain interpolated estimates, depending on the number of derivatives one is willing to sacrifice. 
Rather than attempt to be exhaustive in this section, we will opt for concreteness and list several possible estimates for dimensions $d = 2, 3, 4$, where the choices are more limited. 
Throughout we will let $u$ be a smooth function on $\R^{1+d}$, and $u_t$ will denote its time derivative. If $\alpha$ is an $m$-tuple with elements drawn from $\{1,\dots,d\}$ (namely that $\alpha = (\alpha_1,\dots,\alpha_m)$ with $\alpha_i \in \{1,\dots,d\}$) we denote
\[ L^\alpha u \eqdef L^{\alpha_m}L^{\alpha_{m-1}}\cdots L^{\alpha_1} u.\]
By $|\alpha|$ we refer to its length, namely $m$. 

\subsection{Wave equation, \texorpdfstring{$d = 3,4$}{d=3,4}}
When $d \geq 3$, we can make use of the Hardy inequality (see \cite{Wong2017}) and obtain that $\|u\|_{\Lebw_{-1}^2} \lesssim \|u\|_{\mr\Sobw_{-1}^{1,2}}$. Therefore we will denote by $\mathfrak{E}_k$ the $k$th order energy quantity
\[ \mathfrak{E}_k(\tau) = \tau^{-1/2} \|u\|_{\Sobw^{k+1,2}_{-1}(\Sigma_\tau)} + \tau^{1/2} \|u_t\|_{\Sobw^{k,2}_{-1}(\Sigma_\tau)}. \]
If $u$ solves the linear wave equation with initial data $u(0,x) = u_0(x)$ and $u_t(0,x) = u_1(x)$, then $\mathfrak{E}_k(\tau)$ is uniformly bounded by $\|u_0\|_{W^{k+1,2}} + \|u_1\|_{W^{k,2}}$. Here $W^{k,p}$ are the standard Sobolev spaces on $\R^d$. 

\begin{prop}[$d = 3$]
	When $r \in [2,6]$,  
	\begin{align}
		\tau^{-1/r} \| u\|_{\Lebw_{r/2 - 2}^r(\Sigma_\tau)} & \lesssim \mathfrak{E}_0(\tau), \label{pqrGNS-wave-d=3} \\
		\tau^{-1/r}\left( \|u\|_{\mr \Sobw_{r/2 - 2}^{k+1,r}(\Sigma_\tau)} + \tau \|u_t\|_{\mr \Sobw_{r/2-2}^{k,r}(\Sigma_\tau)}\right) & \lesssim \left( \mathfrak{E}_k(\tau)\right)^{\frac{6-r}{2r}} \cdot \left( \mathfrak{E}_{k+1}(\tau)\right)^{\frac{3r-6}{2r}} . \label{pqrGNS-waveder-d=3}
	\end{align}
	When $r > 6$, 	
	\begin{align}
		\tau^{-1/r} \|u\|_{\Lebw_{r/2 -2}^{r}(\Sigma_\tau)}&  \lesssim \left(\mathfrak{E}_0(\tau)\right)^{\frac{r+6}{2r}} \cdot \left( \mathfrak{E}_{1}(\tau)\right)^{\frac{r-6}{2r}}, \label{p3rGNS-wave-ver1} \\
			\tau^{-1/r}\left(  \|u\|_{\mr \Sobw_{r/2 - 2}^{k+1,r}(\Sigma_\tau)}  + \tau \|u_t\|_{\mr \Sobw_{r/2-2}^{k,r}(\Sigma_\tau)}\right) &  \lesssim \left(\mathfrak{E}_{k+1}(\tau)\right)^{\frac{r+6}{2r}} \cdot \left( \mathfrak{E}_{k+2}(\tau)\right)^{\frac{r-6}{2r}} . \label{p3rGNS-waveder-ver1}
	\end{align}
	For higher derivatives, the latter of the above estimate in $r > 6$ can be replaced by
	\begin{equation}
	\tau^{-1/r} \left( \| u\|_{\mr \Sobw_{r/2-2}^{k+1,r}(\Sigma_\tau)} + \tau \|u_t\|_{\mr \Sobw_{r/2-2}^{k,r}(\Sigma_\tau)} \right) \lesssim \left( \mathfrak{E}_k(\tau)\right)^{\frac{4}{2r}} \cdot \left( \mathfrak{E}_{k+1}(\tau)\right)^{\frac{r-2}{2r}}\cdot \left( \mathfrak{E}_{k+2}\right)^{\frac{r-2}{2r}} \label{p3rGNS-waveder-ver2}
	\end{equation}
\end{prop}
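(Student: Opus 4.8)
The plan is to handle the proposition in two regimes: the sub-critical range $r\in[2,6]$, where a single application of \eqref{eq:wtpqrGNS} does everything, and the super-critical range $r>6$ (where the Sobolev conjugate $p^*=6$ is exceeded), where one first buys an $L^\infty$ bound at the cost of the next energy level and then interpolates by hand.

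For $r\in[2,6]$ I would apply \eqref{eq:wtpqrGNS} with $p=q=2$, $\alpha=0$, $\beta=-1$: the exponent is $\theta=\tfrac{6-r}{2r}$, $1-\theta=\tfrac{3r-6}{2r}$, and one checks that the left-hand weight $1+(\theta\beta+(1-\theta)\alpha)r$ collapses to exactly $\tfrac r2-2$. The two factors on the right are $\|u\|_{\Lebw^2_{-1}}$ and $\|u\|_{\Sobw^{1,2}_{-1}}$, both $\lesssim\tau^{1/2}\mathfrak{E}_0(\tau)$ — for the first one uses the Hardy inequality and $\|u\|_{\Lebw^2_{-1}}\lesssim\|u\|_{\mr\Sobw^{1,2}_{-1}}$. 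Collecting the powers of $\tau$ (from the prefactor $\tau^{(1-\theta)/d}$ and the two $\tau^{1/2}$'s) leaves precisely $\tau^{-1/r}$ on the left, which is \eqref{pqrGNS-wave-d=3}. For \eqref{pqrGNS-waveder-d=3} I would rerun this with $u$ replaced by $L^\alpha u$ for each $|\alpha|\le k+1$ and by $L^\alpha u_t$ for each $|\alpha|\le k$, sum over multi-indices, and use that $\mathfrak{E}_k$ controls $\tau^{-1/2}\|u\|_{\Sobw^{k+1,2}_{-1}}$ and $\tau^{1/2}\|u_t\|_{\Sobw^{k,2}_{-1}}$; the point worth checking is that the powers of $\tau$ for the $u$-piece and for the extra-$\tau$-weighted $u_t$-piece come out equal, so that a common $\tau^{-1/r}$ can be pulled out, producing the combination $(\mathfrak{E}_k)^{\theta}(\mathfrak{E}_{k+1})^{1-\theta}$.

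For $r>6$ the two ingredients are: (i) the endpoint embedding \eqref{eq:pGNS}, which with the energy gives $\|v\|_{\Lebw^6_1}\lesssim\tau^{-1/3}\|v\|_{\mr\Sobw^{1,2}_{-1}}$; and (ii) an Agmon-type $L^\infty$ estimate, the Gagliardo--Nirenberg companion of the Morrey inequality \eqref{Morrey}, namely $\tau^{1/2}\|w_\tau^{1/2}v\|_{L^\infty(\Sigma_\tau)}\lesssim\|v\|_{\mr\Sobw^{1,2}_{-1}}^{1/2}\|v\|_{\mr\Sobw^{2,2}_{-1}}^{1/2}$ — this may be quoted from \cite{Wong2017}, or obtained as the $r\to\infty$ limit of the borderline inequality \eqref{eq:pdrGNAWS} (whose $p=d=3$ case, fed the $\mr\Sobw^{1,3}_{-1/2}$-bound gotten by interpolating $\Lebw^2_{-1}$ and $\Lebw^6_1$ of one derivative via \eqref{interpolate}, in fact reproduces all of the $r>6$ estimates directly). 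Granting (i)--(ii), \eqref{p3rGNS-wave-ver1} follows from the pointwise split $w_\tau^{r/2-2}|v|^r=(w_\tau^{1/2}|v|)^{r-6}\cdot w_\tau|v|^6$, i.e.\ $\|v\|_{\Lebw^r_{r/2-2}}^r\le\|w_\tau^{1/2}v\|_{L^\infty}^{r-6}\|v\|_{\Lebw^6_1}^6$, applied with $v=u$ and combined with $\|w_\tau^{1/2}u\|_{L^\infty}\lesssim\mathfrak{E}_0^{1/2}\mathfrak{E}_1^{1/2}$ and $\|u\|_{\Lebw^6_1}\lesssim\tau^{1/6}\mathfrak{E}_0$. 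The derivative version \eqref{p3rGNS-waveder-ver1} is identical with $v=L^\alpha u$ ($|\alpha|\le k+1$) and $v=L^\alpha u_t$ ($|\alpha|\le k$), $\mathfrak{E}_0,\mathfrak{E}_1\mapsto\mathfrak{E}_{k+1},\mathfrak{E}_{k+2}$ since two derivatives land on the $L^\infty$ factor. For \eqref{p3rGNS-waveder-ver2} I would instead use the split $w_\tau^{r/2-2}|v|^r=(w_\tau^{1/2}|v|)^{r-2}\cdot w_\tau^{-1}|v|^2$, i.e.\ $\|v\|_{\Lebw^r_{r/2-2}}^r\le\|w_\tau^{1/2}v\|_{L^\infty}^{r-2}\|v\|_{\Lebw^2_{-1}}^2$: now the $L^2$ slot carries $\|L^\alpha u\|_{\Lebw^2_{-1}}\le\|u\|_{\Sobw^{k+1,2}_{-1}}\lesssim\tau^{1/2}\mathfrak{E}_k$ — one order lower — while the $L^\infty$ slot still costs $\mathfrak{E}_{k+1}$ and $\mathfrak{E}_{k+2}$, yielding the three-factor right-hand side $\mathfrak{E}_k^{2/r}\mathfrak{E}_{k+1}^{(r-2)/(2r)}\mathfrak{E}_{k+2}^{(r-2)/(2r)}$.

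The routine but only real source of friction throughout is the bookkeeping: verifying that each choice of free parameters and the chosen Hölder split makes the exponent of $w_\tau$ in the target norm come out to exactly $\tfrac r2-2$, and that the powers of $\tau$ produced by the energy, by the mismatch between $\dvol$ and $\D x$, and by the GNS prefactors recombine into $\tau^{-1/r}$ (and, for the derivative estimates, that the $u$- and $u_t$-pieces carry the same power of $\tau$). The one genuinely external ingredient is the Agmon-type $L^\infty$ inequality in (ii); if one prefers to stay fully self-contained, the cleanest alternative for $r>6$ is to route everything through \eqref{eq:pdrGNAWS} with $q=6$ (resp.\ $q=2$ for \eqref{p3rGNS-waveder-ver2}) together with \eqref{interpolate}, at the cost of carrying a suitable $r$-dependent choice of the free parameters through the computation.
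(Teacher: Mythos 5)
Your handling of $r\in[2,6]$ is essentially the paper's own argument: apply \eqref{eq:wtpqrGNS} with $p=q=2$, $\alpha=0$, $\beta=-1$ (the paper writes it via \eqref{eq:pqrGNS} with $\beta=-1$, an inessential distinction), compute $\theta=\tfrac{6-r}{2r}$, check the weight collapses to $r/2-2$, dispatch the $\Lebw^2_{-1}$ factor by Hardy, and run the same with $L^\alpha u$, $L^\alpha u_t$ for \eqref{pqrGNS-waveder-d=3}. Nothing to fault there.

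For $r>6$ you propose a genuinely different route from the paper: a pointwise H\"older split $w_\tau^{r/2-2}|v|^r = (w_\tau^{1/2}|v|)^{r-6}\cdot w_\tau|v|^6$ (resp.\ $(w_\tau^{1/2}|v|)^{r-2}\cdot w_\tau^{-1}|v|^2$), with the $L^\infty$ slot fed by a hyperboloidal Agmon inequality
\[
\tau^{1/2}\,\|w_\tau^{1/2}v\|_{L^\infty(\Sigma_\tau)}\lesssim \|v\|_{\mr\Sobw^{1,2}_{-1}}^{1/2}\,\|v\|_{\mr\Sobw^{2,2}_{-1}}^{1/2}.
\]
Your exponent bookkeeping is all correct: I checked that the $\tau$-powers from the $u$- and $u_t$-pieces both recombine to $\tau^{1/r}$ and that the resulting energy exponents are precisely the ones in \eqref{p3rGNS-wave-ver1}, \eqref{p3rGNS-waveder-ver1}, \eqref{p3rGNS-waveder-ver2}. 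The paper instead stays entirely inside its own toolbox: it applies the borderline \eqref{eq:pdrGNAWS} with $q=6$ (for \eqref{p3rGNS-waveder-ver1}) and $q=2$ (for \eqref{p3rGNS-waveder-ver2}), solving for the free parameters $\alpha,\beta$ so that the target weight is $r/2-2$, and then controls the $\Sobw^{1,3}_{-1/2}$ factor by the already-proved non-borderline \eqref{pqrGNS-waveder-d=3} at $r=3$. What your route buys is a more transparent structure (one pointwise split rather than a page of parameter algebra); what it costs is the Agmon inequality, which is an ingredient the paper never states or proves.

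Two cautionary remarks on that missing ingredient. First, the route you sketch for obtaining it --- ``the $r\to\infty$ limit of \eqref{eq:pdrGNAWS}'' --- does not work as stated: the implicit constants in the chain leading to \eqref{eq:pdrGNS} grow at least linearly in $r$ (already the substitution $u\mapsto u^{1+r/1^*}$ in \eqref{eq:mainGNS} produces a factor $(1+r/1^*)$ in front of $|L^iu|$), so the inequality degenerates rather than converging. Second, the reference \cite{Wong2017} contains the Morrey-type estimate \eqref{Morrey} but not its bilinear Agmon refinement; quoting it from there is not safe. The inequality itself is almost certainly true (it is the hyperboloidal analogue of $\|u\|_{L^\infty(\R^3)}\lesssim\|\nabla u\|_{L^2}^{1/2}\|D^2u\|_{L^2}^{1/2}$, which holds by the usual frequency-splitting argument even though the borderline $\dot H^{3/2}\hookrightarrow L^\infty$ fails), but as things stand it would need its own proof. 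You identify this honestly and offer as fallback exactly the \eqref{eq:pdrGNAWS}-plus-\eqref{interpolate} route, which is what the paper actually does; so the gap is one of self-containment, not of correctness, provided you either prove the Agmon estimate or switch to the fallback.
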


\begin{proof}	
Estimate \eqref{pqrGNS-wave-d=3} follows by applying \eqref{eq:pqrGNS} with $d = 3, \ q = 2$. Indeed, we see
\[ \tau^{(1-\theta)/3} \| u\|_{\Lebw_{1-\theta r}^r} \lesssim \left( \|u\|_{\Lebw_{-1}^2}\right)^\theta \cdot \left( \|u\|_{\mr \Sobw_{-1}^{1,2}}\right)^{1-\theta},\]
where $\theta \in [0,1]$ is the solution to 
\[\frac{1}{r} = \frac{\theta}{2} + \frac{1-\theta}{6}\qquad \Longrightarrow \qquad \theta = \frac{6-r}{2r}, \quad 1-\theta = \frac{3r-6}{2r}.\]
Rearranging using Hardy on the first factor and the definition of the energy we see that \eqref{pqrGNS-wave-d=3} follows. Similarly, if $\alpha$ is a $k$-tuple with elements drawn from $\{1,2,3\}$ and $v$ is any function we have
\[ \tau^{(1-\theta)/3} \|L^\alpha v\|_{\Lebw_{1-\theta r}^r} \lesssim \left( \|L^\alpha v\|_{\Lebw_{-1}^2}\right)^\theta \cdot \left( \|L^\alpha v\|_{\mr \Sobw_{-1}^{1,2}}\right)^{1-\theta},\]
with the same $\theta$ as before. Replacing $v \mapsto L^iu$ or $u_t$, and since we can estimate $\| L^\alpha L^i u\|_{\Lebw_{-1}^2}$ by the $k$th order energy without invoking Hardy, \eqref{pqrGNS-waveder-d=3} follows using the definition of their energies with the respective weights.

For larger $r$, we first appeal to \eqref{eq:pdrGNAWS} with $d = 3$, $q = 6$, and
\begin{gather*}
	1 + \beta q + \alpha q = 1\\
	-2 (1 + \beta \theta r) + 3\alpha = -\frac12\\
	\theta r = \frac{9}{\frac32 + r - 6} \\
\end{gather*}
which is solved by
\[ - \alpha = \beta = - \frac{\frac32 + r - 6}{3 + 2r} .\]
This implies
\[ \tau^{\frac{r-6}{3r}} \| u\|_{\Lebw_{r/2 - 2}^r} \lesssim \|u\|_{\Lebw_1^6}^{6/r}\cdot \|u\|_{\Sobw_{-1/2}^{1,3}}^{(r-6)/r}.\]
Applying \eqref{pqrGNS-wave-d=3} and \eqref{pqrGNS-waveder-d=3} to the two terms on the right we get
\[
	\tau^{\frac{r-6}{3r}} \| u\|_{\Lebw_{r/2 - 2}^r} \lesssim \bigl(\tau^{1/6} \mathfrak{E}_0(\tau)\bigr)^{6/r} \cdot 
	\bigl(\tau^{1/3} \mathfrak{E}_0(\tau)^{1/2} \mathfrak{E}_1(\tau)^{1/2}\bigr)^{(r-6)/r}
\]
and
\[
	\tau^{\frac{r-6}{3r}} \| u\|_{\mr\Sobw_{r/2 - 2}^{k,r}} \lesssim \bigl(\tau^{1/6} \mathfrak{E}_k(\tau)\bigr)^{6/r} \cdot 
	\bigl(\tau^{1/3} \mathfrak{E}_{k}(\tau)^{1/2} \mathfrak{E}_{k+1}(\tau)^{1/2}\bigr)^{(r-6)/r}.
\]
Rearranging this gives \eqref{p3rGNS-wave-ver1} and \eqref{p3rGNS-waveder-ver1}

To find the other estimate for $r > 6$ we appeal to the borderline \eqref{eq:pdrGNAWS} inequality slightly differently. 
Using $d = 3$ and $q = 2$ now, with
\begin{gather*}
		1 + \beta q + \alpha q = -1,\\
		(1-d)(1 + \theta\beta r) + \alpha d = -1/2,\\
		1/r = \theta/q + (1-\theta)/(r + 1^*),
\end{gather*}
we can solve to find
\[ \theta = \frac{6}{r(2r-1)},\qquad \beta = \frac{(-3)(2r-1)}{2(3+2r)},\qquad \alpha = \frac{2r-9}{2(3+2r)}. \] 
Let $\alpha$ be a $k$-tuple with elements drawn from $\{1,2,3\}$ and $v$ be any function. Then the inequality reads  
\[ \left(\tau^{1/3}\right)^{\frac{r-2}{r}} \| L^\alpha v\|_{\Lebw_{r/2-2}^r} \lesssim  \left(\| L^\alpha v\|_{\Lebw_{-1}^2}\right)^{2/r} \cdot \left( \| L^\alpha v\|_{\Sobw_{-1/2}^{1,3}}\right)^{\frac{r-2}{r}}.\]
Estimating the second factor using \eqref{pqrGNS-waveder-d=3} with $r = 3$ and the choice $v = L^i u$ or $u_t$, we can then rearrange to obtain \eqref{p3rGNS-waveder-ver2}.
\end{proof}

\begin{prop}[$d = 4$]
	When $r \in [2,4]$, 
	\begin{align}
		\tau^{-1/r} \|u\|_{\Lebw_{r-3}^r(\Sigma_\tau)} &\lesssim \mathfrak{E}_0(\tau), \label{pqrGNS-wave-d=4} \\
		\tau^{-1/r} \left( \| u\|_{\mr \Sobw_{r-3}^{k+1,r}} + \tau \| u_t\|_{\mr \Sobw_{r-3}^{k,r}(\Sigma_\tau)} \right) & \lesssim \left( \mathfrak{E}_k(\tau)\right)^{\frac{4-r}{r}} \left(\mathfrak{E}_{k+1}(\tau)\right)^{\frac{2r-4}{r}}.\label{pqrGNS-waveder-d=4} 
	\end{align}
	When $r > 4$, 
	\begin{align}
	\tau^{-1/r} \| u\|_{\Lebw_{r-3}^r(\Sigma_\tau)} & \lesssim \left( \mathfrak{E}_0(\tau)\right)^{2/r} \cdot \left( \mathfrak{E}_1(\tau)\right)^{\frac{r-2}{r}} \label{p4rGNS-wave-ver1} \\ 
		\tau^{-1/r} \left( \| u \|_{\mr\Sobw_{r-3}^{k+1,r}(\Sigma_\tau)} + \tau \|u_t\|_{\mr \Sobw_{r-3}^{k,r}(\Sigma_\tau)}\right) & \lesssim \left( \mathfrak{E}_k(\tau) \right)^{2/r} \left(  \mathfrak{E}_{k+2}(\tau)\right)^{\frac{r-2}{r}} \label{p4rGNS-waveder-ver1},
	\end{align}
	or
	\begin{align}
		\tau^{-1/r}\|u\|_{\Lebw_{r-3}^{r}(\Sigma_\tau)} & \lesssim \left( \mathfrak{E}_{0}(\tau)\right)^{4/r} \left( \mathfrak{E}_{1}(\tau)\right)^{\frac{r-4}{r}}\label{p4rGNS-wave-ver2}, \\
		\tau^{-1/r}\left( \| u\|_{\mr \Sobw_{r-3}^{k+1,r}(\Sigma_\tau)} + \tau \|u_t\|_{\mr\Sobw_{r-3}^{k,r}(\Sigma_\tau)} \right)& \lesssim \left( \mathfrak{E}_{k+1}(\tau)\right)^{4/r} \left( \mathfrak{E}_{k+2}(\tau)\right)^{\frac{r-4}{r}}.\label{p4rGNS-waveder-ver2}
	\end{align}
\end{prop}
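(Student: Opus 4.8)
The proof follows the template of the $d=3$ proposition, with the Sobolev conjugate of $p = 2$ — which is $p^* = 6$ in dimension $3$ — now being $p^* = 4$ in dimension $4$. For the non-borderline range $r \in [2,4]$, I would apply the interpolation inequality \eqref{eq:pqrGNS} with $d = 4$ and $p = q = 2$. The interpolation exponent is the solution of $\tfrac{1}{r} = \tfrac{\theta}{2} + \tfrac{1-\theta}{4}$, namely $\theta = \tfrac{4-r}{r}$ and $1-\theta = \tfrac{2r-4}{r}$, and one checks that $\tfrac12 - \tfrac{1-\theta}{d} = \tfrac1r$, which is what turns the prefactor $\tau^{(1-\theta)/d}$ into $\tau^{-1/r}$ after the right-hand factors $\|u\|_{\Lebw^2_{-1}}$ and $\|u\|_{\mr\Sobw^{1,2}_{-1}}$ are both bounded by $\tau^{1/2}\mathfrak{E}_0$ (using the Hardy inequality on the former, available since $d \geq 3$). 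The left-hand weight $r-3$ appears as $1 + \theta\beta r$ once $\beta = -1$ is forced by $1 + \beta q = -1$. Estimate \eqref{pqrGNS-waveder-d=4} is obtained by running the same argument on $L^\alpha v$ with $v = L^i u$ or $v = u_t$ and $\alpha$ a $k$-tuple: no Hardy is needed, and one uses that such an $L^\alpha v$ has its $\Lebw^2_{-1}$-norm controlled by $\mathfrak{E}_k$ and its $\mr\Sobw^{1,2}_{-1}$-norm (one further $L$-derivative) controlled by $\mathfrak{E}_{k+1}$, which produces the exponents $\tfrac{4-r}{r}$ on $\mathfrak{E}_k$ and $\tfrac{2r-4}{r}$ on $\mathfrak{E}_{k+1}$; the $u_t$-piece carries an extra power of $\tau$ that precisely matches the $\tau\|u_t\|$ normalisation in the statement.

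For $r > 4$ I would invoke the borderline inequality \eqref{eq:pdrGNAWS} with $d = p = 4$ (so $1^* = \tfrac43$) in two ways, paralleling the two estimates for $r > 6$ in the $d = 3$ case. In both, the free parameters $\alpha, \beta$ — together with $\theta = \tfrac{1^* q}{r(1^* + r - q)}$ — are chosen so that the output $\Lebw^r$-weight is $r - 3$, the $\Sobw^{1,4}$-factor is brought to weight $1$ (so that the $r = 4$ specialisations of \eqref{pqrGNS-wave-d=4} and \eqref{pqrGNS-waveder-d=4}, where $\theta = 0$ and the weight is $r - 3 = 1$, bound it by the appropriate energy), and the $\Lebw^q$-factor is brought to a weight at which it too is controlled by an energy. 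Taking $q = 2$ puts the $\Lebw^2$-factor at weight $-1$, where Hardy plus the definition of $\mathfrak{E}$ applies, and gives the borderline exponents $q/r = 2/r$ and $(r - q)/r = (r - 2)/r$; the $\Lebw^2_{-1}$-factor is then bounded by $\mathfrak{E}_0$ and the $\Sobw^{1,4}_1$-factor by $\mathfrak{E}_1$, which yields \eqref{p4rGNS-wave-ver1}. The differentiated version \eqref{p4rGNS-waveder-ver1} follows by the same procedure applied to $L^\alpha L^i u$ and $L^\alpha u_t$, noting that the $\mr\Sobw^{1,4}$-part of $L^\alpha L^i u$ carries $k + 2$ boosts and is thus controlled at order $\mathfrak{E}_{k+2}$. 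Taking instead $q = 4$ puts the $\Lebw^4$-factor at weight $1$, where the $r = 4$ case of \eqref{pqrGNS-wave-d=4} applies, and gives the exponents $4/r$ and $(r - 4)/r$; both right-hand factors are then at the $r = 4$ endpoint, controlled by $\mathfrak{E}_0$ and $\mathfrak{E}_1$ (respectively $\mathfrak{E}_{k+1}$ and $\mathfrak{E}_{k+2}$ after differentiation), which yields \eqref{p4rGNS-wave-ver2} and \eqref{p4rGNS-waveder-ver2}.

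The only genuine work is the weight and scaling bookkeeping. In each of the two $r > 4$ sub-cases one must check that the linear system imposed by the three weight requirements on $(\alpha,\beta)$ is consistent with the prescribed value of $\theta$, and then track the various powers of $\tau$ — those built into \eqref{eq:pqrGNS} and \eqref{eq:pdrGNAWS}, together with the factors $\tau^{\pm 1/2}$, $\tau^{\pm 1/4}$ and $\tau^{-3/4}$ that appear when the $L^2$- and $\Lebw^4$-based energies are inserted — so that the leftover power of $\tau$ is exactly $\tau^{1/r}$ (respectively $\tau^{1/r - 1}$ on the $u_t$-pieces). I expect this arithmetic to be the main, though entirely routine, obstacle; no new analytic ingredient beyond what the $d = 3$ proof already exhibits is needed.
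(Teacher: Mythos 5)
Your proposal is correct and follows essentially the same route as the paper's proof: the non-borderline range is a direct application of \eqref{eq:pqrGNS} with $d=4$, $p=q=2$, $\theta = (4-r)/r$, and the two $r>4$ estimates come from \eqref{eq:pdrGNAWS} with $d=4$ and $q=2$ (respectively $q=4$), with the weights pinned to $-1$ (resp.\ $1$) on the $\Lebw^q$-factor and $1$ on the $\Sobw^{1,4}$-factor, exactly as in the paper. The only superficial difference is that where you bound the $\Sobw^{1,4}_1$-factor (and, in the $q=4$ case, the $\Lebw^4_1$-factor) via the $r=4$ specialisation of \eqref{pqrGNS-wave-d=4}--\eqref{pqrGNS-waveder-d=4}, the paper invokes \eqref{eq:pGNS} directly; these are the same inequality, so the arguments coincide.
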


\begin{proof}
The proofs of \eqref{pqrGNS-wave-d=4} and \eqref{pqrGNS-waveder-d=4} are the same as \eqref{pqrGNS-wave-d=3} and \eqref{pqrGNS-waveder-d=3} except that now $d = 4$ and $\theta$ solves 
\[ \frac{1}{r} = \frac{\theta}{2} + \frac{1-\theta}{4} \qquad \Longrightarrow \qquad \theta = \frac{4-r}{r}, \quad 1-\theta = \frac{2r - 4}{r}.\] 

To find estimate for $r > 4$ we appeal to the borderline \eqref{eq:pdrGNAWS} inequality. We will first be applying the inequality with
\begin{gather*}
		d = 4,\\
		q = 2,\\
		1 + \beta q + \alpha q = -1,\\
		(1-d)(1 + \theta\beta r) + \alpha d = 1,\\
		1/r = \theta/q + (1-\theta)/(r + 1^*).
\end{gather*}
These equations are solved by
\[ \theta = \frac{8}{r(3r-2)},\qquad \beta = \frac{(-2)(3r-2)}{4+3r},\qquad \alpha = \frac{3r - 8}{4 + 3r}, \] 
and so the weight $1 + \theta\beta r + \alpha r = r - 3$. 

Let $\alpha$ be a $k$-tuple with elements drawn from $\{1,2,3,4\}$ and $v$ be any function. Then
\[ \left(\tau^{1/4}\right)^{\frac{r-2}{r}} \| L^\alpha v\|_{\Lebw_{r-3}^r} \lesssim \left( \|L^\alpha v\|_{\Lebw_{-1}^2}\right)^{2/r} \cdot \left( \| L^\alpha v\|_{\Sobw^{1,4}_1}\right)^{\frac{r-2}{r}}.\]
This inequality holds for $r > 2$, so in particular for $r > 4$. If $k = 0$ and $v = u$, then the first factor can be estimated by the energy after invoking Hardy. The second factor can by treated with \eqref{eq:pGNS} because $2^* = 4$:
\[ \| u\|_{\Sobw_1^{1,4}} = \| u\|_{\Lebw_1^4} + \|u\|_{\mr \Sobw_1^{1,4}} \lesssim \tau^{-1/4} (\| u \|_{\mr \Sobw_{-1}^{1,2}} + \|u\|_{\mr \Sobw_{-1}^{2,2}}).\] This gives \eqref{p4rGNS-wave-ver1} after applying the definition of the energy. Again, note that if $k$ is arbitrary and $v = L^i u$, then we do \emph{not} have to invoke Hardy to estimate the first factor by the energy $\tau^{1/r}\mathfrak{E}_{k}^{2/r}$. On the other hand, if  $v =  u_t$,  the first factor is bounded by $\tau^{-1/r} \mathfrak{E}_k^{2/r}$. The second factor in the case of $v \mapsto (L^i u, u_t)$ can again be treated with \eqref{eq:pGNS}. Rearranging the inequalities and using the coercivity of their energies with the respective weights gives \eqref{p4rGNS-waveder-ver1}.

Alternatively, we can also solve with
\begin{gather*}
		d = 4\\
		q = 4\\
		1 + \beta q + \alpha q = 1\\
		(1-d)(1 + \theta\beta r) + \alpha d = 1\\
		1/r = \theta/q + (1-\theta)/(r + 1^*).
\end{gather*}
Let $\alpha$ be a $k$-tuple now and compute again with \eqref{eq:pdrGNAWS} and \eqref{eq:pGNS}
\begin{align*}
\left( \tau^{1/4}\right)^{\frac{r-4}{r}} \| L^\alpha v\|_{\Lebw_{1 + \theta \beta r + \alpha r}^r}&  \lesssim \left( \|L^\alpha v \|_{\Lebw_1^4}\right)^{4/r} \cdot \left( \|L^\alpha v \|_{\Sobw_1^{1,4}}\right)^{\frac{r-4}{r}}.
\end{align*}
The prior equations are solved by 
\[ \theta = \frac{16}{r(3r - 8)}, \qquad - \beta = \alpha = \frac{3r-8}{4+3r}\] 
and so the weight $1 + \theta \beta r + \alpha r = r - 3$. We control each factor with \eqref{eq:pGNS} in the two cases $k = 0, \ v = u$ and arbitrary $k$ and $v = (L^i u, u_t)$ as above. This finishes the proof of \eqref{p4rGNS-wave-ver2} and \eqref{p4rGNS-waveder-ver2}.
\end{proof}

\subsection{Wave equation, \texorpdfstring{$d = 2$}{d=2}}
When $d = 2$, Hardy's inequality is generally unavailable for the wave equation energy. So the $k$th order energy should only be
\[ \mathfrak{E}_k(\tau) = \tau^{-1/2} \sum_{j = 1}^{k+1}\|u\|_{\mr\Sobw^{j,2}_{-1}(\Sigma_\tau)} + \tau^{1/2} \|u_t\|_{\Sobw^{k,2}_{-1}(\Sigma_\tau)}. \]
So we cannot in general control $\|u\|_{\Lebw_*^{r}}$; but we can control the first derivatives of $u$ in $\Lebw^r_*$ with suitable weights. 

\begin{prop}
	When $r \in[2,\infty)$, 
	\begin{align}
		\tau^{ - 1/r} \left( \|u\|_{\mr\Sobw_{-1}^{k+1,r}(\Sigma_\tau)} +\tau \|u_t\|_{\mr\Sobw_{-1}^{k,r}(\Sigma_\tau)} \right) \lesssim \left( \mathfrak{E}_{k}(\tau) \right)^{2/r} \left( \mathfrak{E}_{k+1}(\tau)\right)^{\frac{r-2}{r}}, \label{p2rGNS-waveder}
	\end{align}
\end{prop}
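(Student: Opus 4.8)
The plan is to distill everything to one weighted interpolation estimate, then apply it to each individual derivative $L^\alpha u$ and $L^\alpha u_t$, converting the right-hand factors into the energies $\mathfrak{E}_k$ and $\mathfrak{E}_{k+1}$. First I would invoke the borderline inequality \eqref{eq:pdrGNAWS} with $d = q = 2$ (so $1^* = 2$), and choose the parameters $\alpha,\beta$ so that the three weights $1 + \theta\beta r + \alpha r$, $1 + \beta q + \alpha q$ and $(1-d)(1+\beta\theta r) + \alpha d$ all equal $-1$. Since the interpolation constraint forces $\theta = 4/r^2$ (note $\theta \in (0,1]$ exactly when $r \geq 2$), this is two equations for $\alpha,\beta$ together with one compatibility condition; one checks it is solved by $\alpha = -2/(r+2)$, $\beta = -r/(r+2)$. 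The outcome is a \emph{master inequality}: for any function $v$ on $\Sigma_\tau$,
\[ \tau^{\frac{r-2}{2r}}\|v\|_{\Lebw^r_{-1}} \lesssim \|v\|_{\Lebw^2_{-1}}^{2/r}\,\|v\|_{\Sobw^{1,2}_{-1}}^{\frac{r-2}{r}}. \]
This is just the hyperboloidal weighted analogue of the standard two-dimensional Gagliardo–Nirenberg inequality $\|v\|_{L^r(\R^2)} \lesssim \|v\|_{L^2}^{2/r}\|v\|_{H^1}^{(r-2)/r}$, reducing to weighted Ladyzhenskaya at $r = 4$. Throughout I use $\Sobw^{1,2}_{-1} = \Lebw^2_{-1} + \mr\Sobw^{1,2}_{-1}$ without comment.

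Next I would handle the top-order $u$-term. For a $(k+1)$-tuple $\alpha$ with entries in $\{1,2\}$, apply the master inequality with $v = L^\alpha u$. Since $L^\alpha u$ is a $(k+1)$st order derivative of $u$ we have $\|L^\alpha u\|_{\Lebw^2_{-1}} \leq \|u\|_{\mr\Sobw^{k+1,2}_{-1}} \lesssim \tau^{1/2}\mathfrak{E}_k$, and similarly $\|L^\alpha u\|_{\Sobw^{1,2}_{-1}} \lesssim \|u\|_{\mr\Sobw^{k+1,2}_{-1}} + \|u\|_{\mr\Sobw^{k+2,2}_{-1}} \lesssim \tau^{1/2}\mathfrak{E}_{k+1}$; no Hardy inequality is needed here. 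Feeding these in and summing over $\alpha$, after collecting the powers of $\tau$ via $\tfrac12\cdot\tfrac2r + \tfrac12\cdot\tfrac{r-2}{r} - \tfrac{r-2}{2r} = \tfrac1r$, gives
\[ \tau^{-1/r}\,\|u\|_{\mr\Sobw^{k+1,r}_{-1}} \lesssim \mathfrak{E}_k^{2/r}\,\mathfrak{E}_{k+1}^{\frac{r-2}{r}}. \]

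For the $u_t$-term, for a $k$-tuple $\alpha$ I would apply the master inequality with $v = L^\alpha u_t$. Now the energy bounds carry an extra factor $\tau^{-1}$: $\|L^\alpha u_t\|_{\Lebw^2_{-1}} \leq \|u_t\|_{\Sobw^{k,2}_{-1}} \lesssim \tau^{-1/2}\mathfrak{E}_k$ and $\|L^\alpha u_t\|_{\Sobw^{1,2}_{-1}} \lesssim \|u_t\|_{\Sobw^{k+1,2}_{-1}} \lesssim \tau^{-1/2}\mathfrak{E}_{k+1}$. The master inequality then yields $\|L^\alpha u_t\|_{\Lebw^r_{-1}} \lesssim \tau^{-(r-1)/r}\mathfrak{E}_k^{2/r}\mathfrak{E}_{k+1}^{(r-2)/r}$; multiplying by $\tau$ and then by $\tau^{-1/r}$ reproduces exactly the right-hand side $\mathfrak{E}_k^{2/r}\mathfrak{E}_{k+1}^{(r-2)/r}$, since $1 - (r-1)/r - 1/r = 0$. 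Summing over $\alpha$ and adding this to the bound from the previous paragraph gives the claimed estimate \eqref{p2rGNS-waveder}.

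The one place calling for care — and where I expect the computation to be most error-prone — is the weight and $\tau$-power bookkeeping: verifying that the (formally overdetermined) system forcing all three weights in \eqref{eq:pdrGNAWS} to equal $-1$ is actually consistent, and confirming that the $\tau^{\pm 1/2}$ asymmetry between the $u$-part and the $u_t$-part of $\mathfrak{E}_k$ is precisely absorbed by the explicit factor of $\tau$ multiplying $\|u_t\|_{\mr\Sobw^{k,r}_{-1}}$ on the left, so that both pieces land on the same quantity $\tau^{1/r}\mathfrak{E}_k^{2/r}\mathfrak{E}_{k+1}^{(r-2)/r}$. Beyond this arithmetic there is no genuine analytic obstacle, since the substantive work is already contained in the borderline inequality \eqref{eq:pdrGNAWS}.
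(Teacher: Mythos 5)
Your proposal is correct and follows the paper's proof exactly: you invoke the borderline inequality \eqref{eq:pdrGNAWS} with $d = q = 2$, solve for $\theta = 4/r^2$, $\alpha = -2/(r+2)$, $\beta = -r/(r+2)$ so that all three weights equal $-1$, and then apply the resulting interpolation estimate to $L^\alpha u$ and $L^\alpha u_t$, absorbing the $\tau^{\pm 1/2}$ factors into the definitions of $\mathfrak{E}_k$, $\mathfrak{E}_{k+1}$. The $\tau$-power bookkeeping you carry out explicitly is what the paper leaves to ``the coercivity of their energies with the respective weights.''
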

\begin{proof}
We appeal to the borderline \eqref{eq:pdrGNAWS} inequality with
	\begin{gather*}
		d = 2,\\
		q = 2,\\
		1 + \beta q + \alpha q = -1, \\
		(1-d)(1 + \theta\beta r) + \alpha d = -1,\\
		1/r = \theta / 2 + (1-\theta) / (r + 2).
	\end{gather*}
These equations are solved by 
\[ \theta = \frac{4}{r^2},\qquad \beta = \frac{-r}{2 + r},\qquad \alpha = \frac{-2}{2+r},\]
and so the weight $1 + \theta \beta r + \alpha r = -1$. 
Let $\alpha$ be a $k$-tuple and let $v$ be an arbitrary function. Then we compute  
\begin{align*}
 \left(\tau^{1/2}\right)^{\frac{r-2}{r}}\| L^\alpha v\|_{\Lebw_{-1}^r}  \lesssim \left( \|L^\alpha v\|_{\Lebw_{-1}^2}\right)^{2/r} \cdot \left( \| L^\alpha v\|_{\Sobw_{-1}^{1,2}} \right)^{\frac{r-2}{r}}.
\end{align*}
Replacing $v \mapsto L^iu$ or $u_t$ and using the coercivity of their energies with the respective weights concludes the proof.
\end{proof}

\subsection{Klein-Gordon equation, \texorpdfstring{$d = 2, 3,4$}{d = 2,3,4}}
The Klein-Gordon energies control additionally a differently weighted $L^2$ term. Moreover, as we will see below, it is useful to distinguish between the energies of $u$ and $u_t$ (the latter of which also solves the Klein-Gordon equation). We write the $k$th order energy as
\[ \mathfrak{E}_k[v](\tau) = \tau^{-1/2} \| v \|_{\Sobw^{k+1,2}_{-1}(\Sigma_\tau)} + \tau^{1/2} \|v_t\|_{\Sobw^{k,2}_{-1}(\Sigma_\tau)} + \tau^{-1/2} \|v\|_{\Sobw^{k,2}_{1}(\Sigma_\tau)},\]
where $v$ can play the roll of $u$ or $u_t$. Here we've assumed that $\tau \geq 1$, so that $\|u\|_{\Lebw_{-1}^2} \leq \|u\|_{\Lebw_{1}^2}$.

\begin{prop}[$d = 2$] \label{KG-d=2}
	When $r > 2$, we have
	\begin{align}
	\tau^{-1/r} \| u\|_{\mr \Sobw_1^{k,r}(\Sigma_\tau)} & \lesssim \mathfrak{E}_{k}[u](\tau) \label{p2rGNS-KGder-ver1}, \\
\tau^{-1/r} \| u\|_{\mr \Sobw_{r-1}^{k,r}(\Sigma_\tau)} & \lesssim \left(\mathfrak{E}_{k}[u](\tau)\right)^{2/r}\cdot \left( \mathfrak{E}_{k+1}[u]\right)^{\frac{r-2}{r}} \label{p2rGNS-KGder-ver2},\\
\tau^{-1/r} \|u\|_{\mr \Sobw_{-1}^{k+1,r}(\Sigma_\tau)} & \lesssim \left( \mathfrak{E}_k[u](\tau)\right)^{2/r}\cdot \left( \mathfrak{E}_{k+1}[u]\right)^{\frac{r-2}{r}} \label{p2rGNS-KGder-ver3},\\
\tau^{-1/r} \| u\|_{\mr \Sobw_{r-3}^{k+1,r}(\Sigma_\tau)} & \lesssim \left( \mathfrak{E}_{k}[u](\tau)\right)^{2/r} \cdot \left( \mathfrak{E}_{k+2}[u](\tau)\right)^{\frac{r-2}{r}} \label{p2rGNS-KGder-ver4}.
\end{align}
 For the time derivatives the following estimates hold:
\begin{align}
 \tau^{1-3/r} \| u_t\|_{\mr \Sobw_1^{k,r}(\Sigma_\tau)} & \lesssim \left( \mathfrak{E}_k[u_t](\tau)\right)^{{2/r}}\cdot \left(\mathfrak{E}_{k+1}[u](\tau)\right)^{\frac{r-2}{r}},\label{p2rGNS-KG-ut-ver1} \\
  \tau^{-1/r}  \| u_t\|_{\mr \Sobw_{r-1}^{k,r}(\Sigma_\tau)} & \lesssim \left( \mathfrak{E}_k[u_t](\tau)\right)^{2/r} \cdot \left( \mathfrak{E}_{k+1}[u_t](\tau)\right)^{\frac{r-2}{r}}\label{p2rGNS-KG-ut-ver2} ,\\
 \tau^{1-1/r} \|u_t\|_{\mr \Sobw_{-1}^{k,r}(\Sigma_\tau)} & \lesssim \left(\mathfrak{E}_k[u](\tau)\right)^{2/r} \cdot \left( \mathfrak{E}_{k+1}[u](\tau)\right)^{\frac{r-2}{r}}, \label{p2rGNS-KG-ut-ver3}\\
 \tau^{1/r} \| u_t\|_{\mr \Sobw_{r-3}^{k,r}(\Sigma_\tau)} & \lesssim \left( \mathfrak{E}_k[u](\tau)\right)^{2/r} \cdot \left( \mathfrak{E}_{k+1}[u_t](\tau)\right)^{\frac{r-2}{r}}  \label{p2rGNS-KG-ut-ver4}.
 \end{align}
\end{prop}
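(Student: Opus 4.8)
The plan is to derive all eight inequalities from a single family of applications of the borderline estimate \eqref{eq:pdrGNAWS}, specialized to $d=p=q=2$ (the choice $q=2$ is forced, since the stated right-hand exponents $2/r$ and $(r-2)/r$ must match the exponents $q/r$ and $(r-q)/r$ appearing in \eqref{eq:pdrGNAWS}), combined with the coercivity of the Klein--Gordon energies $\mathfrak{E}_\bullet[u]$ and $\mathfrak{E}_\bullet[u_t]$. The first step is to record exactly what these energies control: for $v$ equal to $u$ or $u_t$ and $\beta$ a multi-index, the definition of $\mathfrak{E}_k[v]$ gives $\|L^\beta v\|_{\Lebw^2_{-1}}\lesssim\tau^{1/2}\mathfrak{E}_k[v]$ when $|\beta|\le k+1$, $\|L^\beta v\|_{\Lebw^2_{1}}\lesssim\tau^{1/2}\mathfrak{E}_k[v]$ when $|\beta|\le k$, and $\|L^\beta v_t\|_{\Lebw^2_{-1}}\lesssim\tau^{-1/2}\mathfrak{E}_k[v]$ when $|\beta|\le k$. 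In particular, since $u_t$ again solves the Klein--Gordon equation, $L^\beta u_t$ can be controlled either through $\mathfrak{E}_\bullet[u_t]$ (in both of the scales $\Lebw^2_{\pm1}$) or through $\mathfrak{E}_\bullet[u]$ (but then only in the scale $\Lebw^2_{-1}$, and with the weight $\tau^{-1/2}$ in place of $\tau^{1/2}$); it is this dichotomy that accounts for the differences between the time-derivative estimates and their $u$-counterparts.

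I would next specialize \eqref{eq:pdrGNAWS}. With $d=p=q=2$ and $1^*=2$ the equation defining $\theta$ reads $\tfrac1r=\tfrac\theta2+\tfrac{1-\theta}{r+2}$, which forces $\theta=4/r^2$; then \eqref{eq:pdrGNAWS} becomes
\[ \tau^{\frac{r-2}{2r}}\,\|w\|_{\Lebw^r_{1+\theta\beta r+\alpha r}}\ \lesssim\ \bigl(\|w\|_{\Lebw^2_{1+2\beta+2\alpha}}\bigr)^{2/r}\bigl(\|w\|_{\Sobw^{1,2}_{-1-\theta\beta r+2\alpha}}\bigr)^{\frac{r-2}{r}}, \]
a two-parameter family with three weights to be assigned. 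A short computation with $\theta=4/r^2$ shows that the left-hand weight $\gamma_{\mathrm{lhs}}$, the $\Lebw^2$-weight $\gamma_{\mathrm{Leb}}$, and the $\Sobw^{1,2}$-weight $\gamma_{\mathrm{Sob}}$ are forced to satisfy $\gamma_{\mathrm{Leb}}=\gamma_{\mathrm{lhs}}$ whenever $\gamma_{\mathrm{Sob}}=-1$, and $\gamma_{\mathrm{Leb}}=\gamma_{\mathrm{lhs}}-r+2$ whenever $\gamma_{\mathrm{Sob}}=1$. Since the energies control weighted $\Sobw^{1,2}$ norms only for the weights $-1$ and $1$, the admissible configurations $(\gamma_{\mathrm{lhs}},\gamma_{\mathrm{Leb}},\gamma_{\mathrm{Sob}})$ are exactly $(1,1,-1)$, $(-1,-1,-1)$, $(r-1,1,1)$, and $(r-3,-1,1)$, whose left-hand weights $1,-1,r-1,r-3$ are precisely those occurring in \eqref{p2rGNS-KGder-ver1}--\eqref{p2rGNS-KGder-ver4}.

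For each of the four configurations I would put $w=L^\beta u$, with $|\beta|=k$ for \eqref{p2rGNS-KGder-ver1}--\eqref{p2rGNS-KGder-ver2} and $|\beta|=k+1$ for \eqref{p2rGNS-KGder-ver3}--\eqref{p2rGNS-KGder-ver4}, bound the two right-hand factors using the energy inequalities recorded above, sum over all multi-indices of the relevant length, and collect the powers of $\tau$. The derivative count dictates which energy index is incurred: in \eqref{p2rGNS-KGder-ver1} both factors are controlled by $\mathfrak{E}_k[u]$ (hence the single energy on the right-hand side), whereas in \eqref{p2rGNS-KGder-ver4} the factor $\|L^\beta u\|_{\Sobw^{1,2}_1}$ with $|\beta|=k+1$ reaches derivatives of order $k+2$ in $\Lebw^2_1$ and therefore costs $\mathfrak{E}_{k+2}[u]$. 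The time-derivative estimates \eqref{p2rGNS-KG-ut-ver1}--\eqref{p2rGNS-KG-ut-ver4} come out of the identical scheme applied to $w=L^\beta u_t$: each of the two right-hand factors carries a weight that determines the available energies ($-1$ permitting $\mathfrak{E}_\bullet[u]$ and contributing $\tau^{-1/2}$, $1$ forcing $\mathfrak{E}_\bullet[u_t]$ and contributing $\tau^{1/2}$), and the resulting mismatch of $\tau$-powers is exactly what produces the left-hand prefactors $\tau^{1-3/r}$, $\tau^{1-1/r}$, and $\tau^{1/r}$.

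Each individual computation is routine; the real content, and the one place requiring genuine care, is the weight bookkeeping. Because \eqref{eq:pdrGNAWS} offers only the two parameters $\alpha,\beta$ against three weights, one cannot in general place all three on energy-controlled values, and it is the little algebraic coincidence recorded above --- which is special to the borderline case $p=d=2$ --- that makes the scheme close at all. The remaining difficulty is purely clerical: keeping the derivative orders straight so as to read off $\mathfrak{E}_k$ versus $\mathfrak{E}_{k+1}$ versus $\mathfrak{E}_{k+2}$, and keeping the powers of $\tau$ straight (especially in the $u_t$ estimates, where each factor must be routed through whichever of $\mathfrak{E}_\bullet[u]$ and $\mathfrak{E}_\bullet[u_t]$ controls the relevant weighted norm). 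No tool is needed beyond \eqref{eq:pdrGNAWS} and the elementary bound $\|\cdot\|_{\Lebw^2_{-1}}\le\|\cdot\|_{\Lebw^2_1}$, valid for $\tau\ge1$.
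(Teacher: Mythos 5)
Your proposal is correct and follows essentially the same route as the paper: both apply the borderline inequality \eqref{eq:pdrGNAWS} with $d=q=2$ (so $\theta=4/r^2$), identify the four admissible weight configurations --- your triples $(1,1,-1)$, $(-1,-1,-1)$, $(r-1,1,1)$, $(r-3,-1,1)$ are exactly the paper's cases $\mu,\nu=\pm1$ with left-hand weights $1,-1,r-1,r-3$ --- and then substitute $L^\beta u$ or $L^\beta u_t$ and invoke the coercivity of $\mathfrak{E}_\bullet[u]$, $\mathfrak{E}_\bullet[u_t]$ with the stated $\tau$-bookkeeping. The energy indices and $\tau$-prefactors you read off in each of the eight cases agree with the proposition, so no gap remains.
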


\begin{proof}
Throughout this proof $\alpha$ will be a $k$-tuple and $v$ will be an arbitrary function. We solve \eqref{eq:pdrGNAWS} for
	\begin{gather*}
		d = 2\\
		q = 2\\
		1 + \beta q + \alpha q = \mu\\
		(1-d)(1 + \theta\beta r) + \alpha d = \nu\\
		1/r = \theta / 2 + (1-\theta) / (r+2),
	\end{gather*}
where $\mu,\nu$ can take the values $\pm 1$. Denoting the weight
\[\rho(\mu,\nu) \eqdef 1 + \theta\beta r + \alpha r,\]
the borderline inequality yields
\begin{equation}
\left(\tau^{1/2}\right)^{\frac{r-2}{r}}\| L^\alpha v\|_{\Lebw_{\rho(\mu,\nu)}^r} \lesssim \left( \| u\|_{\Lebw_\mu^2}\right)^{2/r}\cdot \left( \| u\|_{\Sobw_\nu^{1,2}}\right)^{\frac{r-2}{r}}. \label{KGder-d=2}
\end{equation}
One explicitly computes the weights as
\[ \rho(1,-1) = 1,\qquad \rho(1,1) = r-1, \qquad \rho(-1,-1) = -1, \qquad \rho(-1,1) = r-3.\]
Replacing $v \mapsto (u,u_t)$ in \eqref{KGder-d=2} and using the definition of the energies with their respective weights with $\mu = 1, \nu = -1$ proves \eqref{p2rGNS-KGder-ver1}, \eqref{p2rGNS-KG-ut-ver1}. When $\mu = \nu = 1$, this proves \eqref{p2rGNS-KGder-ver2}, and \eqref{p2rGNS-KG-ut-ver2}. On the other hand, replacing $v \mapsto (L^iu,u_t)$ in \eqref{KGder-d=2} and using the definition of the energies with their respective weights with $\mu = \nu = -1$ shows \eqref{p2rGNS-KGder-ver3}, \eqref{p2rGNS-KG-ut-ver3}. Finally, using $\mu = -1, \nu = 1$ proves \eqref{p2rGNS-KGder-ver4}, and \eqref{p2rGNS-KG-ut-ver4}.

\end{proof}

\begin{rmk}
We note that \eqref{p2rGNS-KGder-ver3} and \eqref{p2rGNS-KG-ut-ver3} are identical to the estimates \eqref{p2rGNS-waveder} derived for the wave equation. Indeed, the Klein-Gordon and wave $t$-energies \emph{both} control
\[ \tau^{-1/2} \sum_{j=1}^{k+1} \|u\|_{\mr\Sobw_{-1}^{j,2}} + \tau^{1/2} \|u_t\|_{\Sobw_{-1}^{1,2}}.\] The takeaway is that the mass term $\tau^{-1/2} \|u\|_{\Lebw_1^2}$ allows for estimates with different weights.
\end{rmk}
\begin{rmk}
One can summarize the proof of Proposition \ref{KG-d=2} by saying that its estimates correspond to the four endpoint cases of $\mu,\nu = \pm 1$ when applying \eqref{eq:pdrGNAWS}. Of course, various interpolations of these hold. One can interpolate, for example,  equation \eqref{p2rGNS-KGder-ver1} with \eqref{p2rGNS-KGder-ver3} to see, for any $\theta \in [0,1]$,
\[\tau^{-1/r} \| u\|_{\mr \Sobw_{1-2\theta}^{k+1,r}} \lesssim \left( \mathfrak{E}_k[u](\tau)\right)^{2\theta/r}\cdot \left( \mathfrak{E}_{k+1}[u](\tau)\right)^{1 - 2\theta/r}.\]
For the sake of brevity and clarity, we leave these straightforward computations to the reader.
\end{rmk}

\begin{prop}[$d= 3$] \label{KG-d=3}
When $r \in [2,6]$, 
\begin{align}
\tau^{-1/r}\| u\|_{\mr \Sobw_1^{k,r}(\Sigma_\tau)} & \lesssim \mathfrak{E}_k[u](\tau) \label{pqrGNS-KGder-d=3-ver1} ,\\
\tau^{-1/r}\| u\|_{\mr \Sobw_{3r/2 - 2}^{k,r}(\Sigma_\tau)} & \lesssim \left( \mathfrak{E}_k[u]\right)^{\frac{6-r}{2r}} \cdot \left( \mathfrak{E}_{k+1}[u]\right)^{\frac{3r-6}{2r}}\label{pqrGNS-KGder-d=3-ver2}, \\
\tau^{-1/r}\| u\|_{\mr \Sobw_{r/2-2}^{k+1,r}(\Sigma_\tau)} & \lesssim \left( \mathfrak{E}_k[u]\right)^{\frac{6-r}{2r}} \cdot \left( \mathfrak{E}_{k+1}[u]\right)^{\frac{3r-6}{2r}} \label{pqrGNS-KGder-d=3-ver3}, \\
\tau^{-1/r}\| u\|_{\mr \Sobw_{2r-5}^{k+1,r}(\Sigma_\tau)} & \lesssim \left( \mathfrak{E}_k[u]\right)^{\frac{6-r}{2r}} \cdot \left( \mathfrak{E}_{k+2}[u]\right)^{\frac{3r-6}{2r}}\label{pqrGNS-KGder-d=3-ver4}.
\end{align}
For the time derivatives, the following estimates hold:
\begin{align}
 \tau^{3/2-4/r} \| u_t\|_{\mr \Sobw_1^{k,r}(\Sigma_\tau)} & \lesssim \left( \mathfrak{E}_k[u_t](\tau)\right)^{\frac{6-r}{2r}}\cdot \left(\mathfrak{E}_{k+1}[u](\tau)\right)^{\frac{3r-6}{2r}},\label{pqrGNS-KG-ut-d=3-ver1} \\
  \tau^{-1/r}  \| u_t\|_{\mr \Sobw_{3r/2-2}^{k,r}(\Sigma_\tau)} & \lesssim \left( \mathfrak{E}_k[u_t](\tau)\right)^{\frac{6-r}{2r}} \cdot \left( \mathfrak{E}_{k+1}[u_t](\tau)\right)^{\frac{3r-6}{2r}}\label{pqrGNS-KG-ut-d=3-ver2} ,\\
 \tau^{1-1/r} \|u_t\|_{\mr \Sobw_{r/2-2}^{k,r}(\Sigma_\tau)} & \lesssim \left(\mathfrak{E}_k[u](\tau)\right)^{\frac{6-r}{2r}} \cdot \left( \mathfrak{E}_{k+1}[u](\tau)\right)^{\frac{3r-6}{2r}}, \label{pqrGNS-KG-ut-d=3-ver3}\\
 \tau^{-1/2 + 2/r} \| u_t\|_{\mr \Sobw_{2r-5}^{k,r}(\Sigma_\tau)} & \lesssim \left( \mathfrak{E}_k[u](\tau)\right)^{\frac{6-r}{2r}} \cdot \left( \mathfrak{E}_{k+1}[u_t](\tau)\right)^{\frac{3r-6}{2r}}  \label{pqrGNS-KG-ut-d=3-ver4}.
 \end{align}
 When $r > 6$, the following estimates hold:
 \begin{align}
 \tau^{-1/r} \| u\|_{\mr \Sobw_{r-1}^{k,2}(\Sigma_\tau)} & \lesssim \left( \mathfrak{E}_k[u] (\tau)\right)^{2/r} \cdot \left( \mathfrak{E}_{k+1}[u]\right)^{\frac{r-2}{r}}, \label{p3rGNS-KGder-ver1}  \\
 \tau^{-1/r} \| u\|_{\mr \Sobw_{3r/2 - 2}^{k,2}(\Sigma_\tau)} & \lesssim  \left( \mathfrak{E}_k[u] (\tau)\right)^{2/r}\cdot \left( \mathfrak{E}_{k+1}[u] (\tau)^{1/2} \cdot  \mathfrak{E}_{k+2}[u](\tau)^{1/2}\right)^{\frac{r-2}{r}},\label{p3rGNS-KGder-ver2} 
\\
 \tau^{-1/r} \| u\|_{\mr \Sobw_{r/2}^{k,2}(\Sigma_\tau)} & \lesssim \left( \mathfrak{E}_k[u] (\tau)\right)^{\frac{r+2}{2r}} \cdot \left( \mathfrak{E}_{k+1}[u](\tau)\right)^{\frac{r-2}{2r}}, \label{p3rGNS-KGder-ver3} 
\\
  \tau^{-1/r} \| u\|_{\mr \Sobw_{r-1}^{k,2}(\Sigma_\tau)} & \lesssim \left( \mathfrak{E}_k[u] (\tau)\right)^{\frac{r+2}{2r}} \cdot \left( \mathfrak{E}_{k+2}[u]\right)^{\frac{r-2}{2r}}, \label{p3rGNS-KGder-ver4} 
\\
  \tau^{-1/r} \| u\|_{\mr \Sobw_{r-3}^{k+1,2}(\Sigma_\tau)} & \lesssim  \left( \mathfrak{E}_k[u] (\tau)\right)^{2/r} \cdot\left( \mathfrak{E}_{k+2}[u] (\tau)\right)^{\frac{r-2}{r}}, \label{p3rGNS-KGder-ver5} 
\\
   \tau^{-1/r} \| u\|_{\mr \Sobw_{3r/2 - 4}^{k+1,2}(\Sigma_\tau)} & \lesssim  \left( \mathfrak{E}_k[u] (\tau)\right)^{2/r} \cdot\left( \mathfrak{E}_{k+2}[u] (\tau)^{1/2} \cdot  \mathfrak{E}_{k+3}[u](\tau)^{1/2}\right)^{\frac{r-2}{r}},\label{p3rGNS-KGder-ver6} 
\\
    \tau^{-1/r} \| u\|_{\mr \Sobw_{r/2-2}^{k+1,2}(\Sigma_\tau)} & \lesssim  \left( \mathfrak{E}_k[u] (\tau)\right)^{2/r} \cdot\left( \mathfrak{E}_{k+1}[u] (\tau)^{1/2} \cdot  \mathfrak{E}_{k+2}[u](\tau)^{1/2}\right)^{\frac{r-2}{r}}, \label{p3rGNS-KGder-ver7} 
\\
  \tau^{-1/r} \| u\|_{\mr \Sobw_{r-3}^{k+1,2}(\Sigma_\tau)} & \lesssim  \left( \mathfrak{E}_k[u] (\tau)\right)^{2/r} \cdot\left( \mathfrak{E}_{k+1}[u] (\tau)^{1/2} \cdot  \mathfrak{E}_{k+3}[u](\tau)^{1/2}\right)^{\frac{r-2}{r}}.\label{p3rGNS-KGder-ver8} 
  \end{align}
For the time derivatives, we have:
\begin{align}
 \tau^{1/2 - 2/r} \| u\|_{\mr \Sobw_{r-1}^{k,2}(\Sigma_\tau)} & \lesssim \left( \mathfrak{E}_k[u_t] (\tau)\right)^{2/r} \cdot\left( \mathfrak{E}_{k+1}[u_t] (\tau)^{1/2} \cdot  \mathfrak{E}_{k+2}[u](\tau)^{1/2}\right)^{\frac{r-2}{r}}, \label{p3rGNS-KG-ut-ver1}  \\
 \tau^{-1/r} \| u\|_{\mr \Sobw_{3r/2 - 2}^{k,2}(\Sigma_\tau)} & \lesssim  \left( \mathfrak{E}_k[u_t] (\tau)\right)^{2/r} \cdot\left( \mathfrak{E}_{k+1}[u_t] (\tau)^{1/2} \cdot  \mathfrak{E}_{k+2}[u_t](\tau)^{1/2}\right)^{\frac{r-2}{r}},\label{p3rGNS-KG-ut-ver2} 
\\
 \tau^{1-3/r} \| u_t\|_{\mr \Sobw_{r/2}^{k,2}(\Sigma_\tau)} & \lesssim\left( \mathfrak{E}_k[u_t] (\tau)\right)^{2/r} \cdot\left( \mathfrak{E}_{k+1}[u] (\tau)^{1/2} \cdot  \mathfrak{E}_{k+2}[u](\tau)^{1/2}\right)^{\frac{r-2}{r}}, \label{p3rGNS-KG-ut-ver3} 
\\
  \tau^{1/2 - 2/r} \| u_t\|_{\mr \Sobw_{r-1}^{k,2}(\Sigma_\tau)} & \lesssim \left( \mathfrak{E}_k[u_t] (\tau)\right)^{2/r} \cdot\left( \mathfrak{E}_{k+1}[u] (\tau)^{1/2} \cdot  \mathfrak{E}_{k+2}[u_t](\tau)^{1/2}\right)^{\frac{r-2}{r}}, \label{p3rGNS-KG-ut-ver4} 
\\
  \tau^{1/2} \| u_t\|_{\mr \Sobw_{r-3}^{k,2}(\Sigma_\tau)} & \lesssim \left( \mathfrak{E}_k[u] (\tau)\right)^{2/r} \cdot\left( \mathfrak{E}_{k+1}[u_t] (\tau)^{1/2} \cdot  \mathfrak{E}_{k+2}[u](\tau)^{1/2}\right)^{\frac{r-2}{r}}, \label{p3rGNS-KG-ut-ver5} 
\\
   \tau^{1/r} \| u_t\|_{\mr \Sobw_{3r/2 - 4}^{k,2}(\Sigma_\tau)} & \lesssim  \left( \mathfrak{E}_k[u] (\tau)\right)^{2/r} \cdot\left( \mathfrak{E}_{k+1}[u_t] (\tau)^{1/2} \cdot  \mathfrak{E}_{k+2}[u_t](\tau)^{1/2}\right)^{\frac{r-2}{r}},\label{p3rGNS-KG-ut-ver6} 
\\
    \tau^{1-1/r} \| u_t\|_{\mr \Sobw_{r/2-2}^{k,2}(\Sigma_\tau)} & \lesssim  \left( \mathfrak{E}_k[u] (\tau)\right)^{2/r} \cdot \left( \mathfrak{E}_{k+1}[u] (\tau)^{1/2} \cdot  \mathfrak{E}_{k+2}[u](\tau)^{1/2}\right)^{\frac{r-2}{r}}, \label{p3rGNS-KG-ut-ver7} 
\\
  \tau^{1/2} \| u_t\|_{\mr \Sobw_{r-3}^{k,2}(\Sigma_\tau)} & \lesssim  \left( \mathfrak{E}_k[u] (\tau)\right)^{2/r} \cdot \left( \mathfrak{E}_{k+1}[u_t] (\tau)^{1/2} \cdot  \mathfrak{E}_{k+2}[u](\tau)^{1/2}\right)^{\frac{r-2}{r}}.\label{p3rGNS-KG-ut-ver8} 
  \end{align}

\end{prop}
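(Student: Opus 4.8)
This statement is the $d = 3$ counterpart of Proposition~\ref{KG-d=2}, and I would prove it by the same two-regime strategy, the dividing line being the Sobolev conjugate $2^* = 6$. Throughout, $\alpha$ will denote a $k$-tuple with entries in $\{1,2,3\}$, and for an auxiliary function $v$ I will substitute $u$, $L^i u$, or $u_t$; recall that $\mathfrak{E}_k[v]$ controls the three pieces $\tau^{-1/2}\|v\|_{\Sobw^{k+1,2}_{-1}}$, $\tau^{1/2}\|v_t\|_{\Sobw^{k,2}_{-1}}$, and $\tau^{-1/2}\|v\|_{\Sobw^{k,2}_1}$, and that $u_t$ is itself a Klein--Gordon solution, so $\mathfrak{E}_k[u_t]$ is available too. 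The estimates for $r \in [2,6]$ must be established first, since those for $r > 6$ will be bootstrapped from them.

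For $r \in [2,6]$ one is in the non-borderline range $2 \le r \le 2^* = 6$, so I would apply \eqref{eq:wtpqrGNS} with $d = 3$, $q = 2$, to $L^\alpha v$. The interpolation exponent is then forced to $\theta = (6-r)/(2r)$, and, choosing the free weight parameter of \eqref{eq:wtpqrGNS} so that the $\Lebw^2$-factor carries weight $\mu \in \{1,-1\}$ and the $\Sobw^{1,2}$-factor carries weight $\nu \in \{1,-1\}$, the inequality reads
\[ \tau^{(1-\theta)/3}\,\|L^\alpha v\|_{\Lebw^r_{\rho(\mu,\nu)}} \lesssim \bigl(\|L^\alpha v\|_{\Lebw^2_{\mu}}\bigr)^{\theta}\,\bigl(\|L^\alpha v\|_{\Sobw^{1,2}_{\nu}}\bigr)^{1-\theta}, \]
with $\rho(\mu,\nu)$ taking the four values $\rho(1,-1)=1$, $\rho(1,1)=3r/2-2$, $\rho(-1,-1)=r/2-2$, $\rho(-1,1)=2r-5$. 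Since the three pieces of the energy realize $\Lebw^2$ with either weight and $\Sobw^{1,2}$ with either weight, every factor on the right is absorbed into a term of the energy hierarchy; which term (order $k$, $k+1$, or $k+2$, and for $u$ or for $u_t$) is dictated by the sign of $\nu$, the weight it carries, and whether $v = u$ or $v = L^i u$, and then the $\tau$-powers match automatically. Taking $v = u$ yields \eqref{pqrGNS-KGder-d=3-ver1}, \eqref{pqrGNS-KGder-d=3-ver2} (left-hand sides of order $k$) and $v = L^i u$ yields \eqref{pqrGNS-KGder-d=3-ver3}, \eqref{pqrGNS-KGder-d=3-ver4} (order $k+1$); substituting $v = u_t$ in the same four applications yields \eqref{pqrGNS-KG-ut-d=3-ver1}--\eqref{pqrGNS-KG-ut-d=3-ver4}, the extra $\tau$-powers there coming from the $\tau^{1/2}$ weighting of $u_t$ in the energy.

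For $r > 6$ one is past $2^*$ and \eqref{eq:wtpqrGNS} no longer applies, so I would run the two-step argument used for the $d = 3$ wave equation: apply the borderline inequality \eqref{eq:pdrGNAWS} with $d = p = 3$ and intermediate exponent $q = 2$ to $L^\alpha v$, which bounds $\|L^\alpha v\|_{\Lebw^r_*}$ by $\|L^\alpha v\|_{\Lebw^2_*}^{2/r}$ times $\|L^\alpha v\|_{\Sobw^{1,3}_*}^{(r-2)/r}$ (the weights and the parameter $\theta$, which solves $\tfrac1r = \tfrac\theta2 + \tfrac{1-\theta}{r+3/2}$, being fixed by the target weight; one checks this gives an admissible $\theta$). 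The first factor is controlled directly by the energy. The second factor I would control by the $r \in [2,6]$ estimates \emph{already proved}, evaluated at $r = 3$ and at the relevant order (those estimates hold at all orders): the $r = 3$ form of \eqref{pqrGNS-KGder-d=3-ver1} controls $\mr\Sobw^{m,3}$-norms (for the appropriate $m$) by a single energy, giving the clean right-hand sides such as that of \eqref{p3rGNS-KGder-ver1}, while the $r = 3$ forms of \eqref{pqrGNS-KGder-d=3-ver2}--\eqref{pqrGNS-KGder-d=3-ver4} control them by a geometric mean $\mathfrak{E}_j^{1/2}\mathfrak{E}_{j'}^{1/2}$ of two consecutive or next-to-consecutive energies, producing the factors $\mathfrak{E}_{k+1}^{1/2}\mathfrak{E}_{k+2}^{1/2}$, $\mathfrak{E}_{k+2}^{1/2}\mathfrak{E}_{k+3}^{1/2}$, and so on. Running over the weight realizations, the choice of which $r = 3$ estimate bounds the $\Sobw^{1,3}$-factor, and $v \in \{u, L^i u\}$ (orders $k$ and $k+1$) yields \eqref{p3rGNS-KGder-ver1}--\eqref{p3rGNS-KGder-ver8}; substituting $v = u_t$ with the $r = 3$ time-derivative estimates yields \eqref{p3rGNS-KG-ut-ver1}--\eqref{p3rGNS-KG-ut-ver8}.

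The difficulty is entirely organizational: there is no new idea beyond the $d = 2$ Klein--Gordon argument and the $d = 3$ wave argument, but one must correctly match roughly two dozen triples of (weight exponent, $\tau$-power, energy order) and, for $r > 6$, verify in each application of \eqref{eq:pdrGNAWS} that the linear system for the parameters is solvable with $\theta \in (0,1]$ and that the $\tau$-powers telescope through the composition. I expect the $r > 6$ cases to be the delicate ones, since there the energy order can jump by as much as three and the left-hand $\tau$-exponents (e.g.\ $\tau^{3/2 - 4/r}$, $\tau^{1-3/r}$, $\tau^{1/2}$ in the $u_t$-versions) are sensitive to the exact route taken through the intermediate $\Lebw^3$ estimates.
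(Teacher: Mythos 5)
Your proposal matches the paper's proof essentially step for step: for $r\in[2,6]$ you apply \eqref{eq:wtpqrGNS} with $p=q=2$, $d=3$, and the four weight choices $\mu,\nu=\pm1$ to $L^\alpha v$ with $v\in\{u,L^iu,u_t\}$, and for $r>6$ you compose the borderline \eqref{eq:pdrGNAWS} (with intermediate exponent $q=2$) with the already-established $r=3$ cases to control the $\Sobw^{1,3}$ factor, which is exactly the paper's route. The only point you leave implicit --- how the \emph{inhomogeneous} $\Sobw^{1,2}_\nu$ and $\Sobw^{1,3}_{*}$ factors are absorbed into the energy hierarchy (via the mass term and a case-by-case application of \eqref{pqrGNS-KGder-d=3-ver1}--\eqref{pqrGNS-KGder-d=3-ver4}) --- is the bookkeeping the paper carries out explicitly, and your plan handles it correctly in principle.
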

\begin{proof}
Throughout this proof $\alpha$ will be a $k$-tuple and $v$ will be an arbitrary function. For $r \in [2,6]$, we can solve \eqref{eq:wtpqrGNS} with
	\begin{gather*}
		d = 3\\
		q = 2\\ 
		p = 2\\
		1 + \beta q = \mu\\
		1-p + \alpha p =  \nu\\
		1/r = \theta /q + (1-\theta) /p^*,
	\end{gather*}
where $\mu,\nu$ can again take the values $\pm 1$. Denoting the weight 
\[\rho_{pqr}(\mu,\nu) \eqdef 1 + (\theta \beta + (1-\theta)\cdot \alpha)r\]
the interpolation inequality yields
\begin{equation}
\tau^{1/2 - 1/r} \| L^\alpha v\|_{\Lebw_{\rho_{pqr}(\mu,\nu)}^r} \lesssim \left( \| L^ \alpha v\|_{\Lebw_\mu^2}\right)^{\frac{6-r}{2r}}\cdot \left( \| L^\alpha v\|_{\Sobw^{1,2}_\nu}\right)^{\frac{3r-6}{2r}}. \label{KGder-d=3}
\end{equation}
One explicitly computes the weights as 
\[ \rho_{pqr}(1,-1) = 1, \quad \rho_{pqr}(1,1) = 3r/2 - 2, \quad \rho_{pqr}(-1,-1) = r/2 - 2, \quad \rho_{pqr}(-1,1) = 2r - 5.\]
We note that we are unable to simply replace $v \mapsto u$ in \eqref{KGder-d=3} and use the definition of the energies with their respective weights with $\mu = 1, \nu = -1$ because the second factor in \eqref{KGder-d=3} is the \emph{inhomogeneous} Sobolev norm. To remedy this, we again use the extra mass term in the energy:  
\begin{align*}
 \| L^\alpha u \|_{\Sobw_{-1}^{1,2}} & = \|L^\alpha u\|_{\Lebw_{-1}^2} + \| L^\alpha u \|_{\mr \Sobw_{-1}^{1,2}} \\
 & \le \| L^\alpha u\|_{\Lebw_1^2} +  \| L^\alpha u \|_{\mr \Sobw_{-1}^{1,2}} \\
 & \lesssim \tau^{1/2} \mathfrak{E}_k[u].
\end{align*}
Now we can replace $v \mapsto(u,u_t)$ in \eqref{KGder-d=3} to prove \eqref{pqrGNS-KGder-d=3-ver1}, \eqref{pqrGNS-KG-ut-d=3-ver1} (note that this problem did not occur for $v = u_t$). When $\mu = \nu = 1$, $v \mapsto (u,u_t)$ in \eqref{KGder-d=3} also proves  \eqref{pqrGNS-KGder-d=3-ver2}, and \eqref{pqrGNS-KG-ut-d=3-ver2}.

On the other hand, replacing $v \mapsto (L^iu,u_t)$ in \eqref{KGder-d=3} and using the definition of the energies with their respective weights with $\mu = \nu = -1 $ shows \eqref{pqrGNS-KGder-d=3-ver3}, \eqref{pqrGNS-KG-ut-d=3-ver3}. Finally, using $\mu = -1, \nu = 1$ proves \eqref{pqrGNS-KGder-d=3-ver4}, and \eqref{pqrGNS-KG-ut-d=3-ver4}.

For the estimates when $r > 6$, we appeal to the borderline \eqref{eq:pdrGNAWS} inequality with 
\begin{gather*}
		d = 3,\\
		q = 2,\\
		1 + \beta q + \alpha q = \sigma,\\
		(1-d)(1 + \theta\beta r) + \alpha d = \rho_{pq3}(\mu,\nu),\\
		1/r = \theta/q + (1-\theta)/(r + 1^*),
\end{gather*}
where $\sigma$ can take the values $\pm 1$ and $\rho_{pq3}(\mu,\nu)$ is as above. This inequality is valid for $r > 2$ so in particular $r > 6$. Denoting the weight 
\[\rho_{d=3}(\sigma,\mu,\nu) \eqdef 1 + \theta\beta r + \alpha r,\] the borderline inequality yields
\begin{equation}
\left(\tau^{1/3}\right)^{\frac{r-2}{r}}\|L^\alpha v\|_{\Lebw_{\rho_{d=3}(\sigma,\mu,\nu)}^r}\lesssim \left( \|L^\alpha v\|_{\Lebw_\sigma^2}\right)^{2/r} \cdot \left( \| L^\alpha v\|_{\Sobw_{\rho_{pq3}(\mu,\nu)}^{1,3}}\right)^{\frac{r-2}{r}}.\label{KGder-d=3-border}
\end{equation}
One explicitly computes the weights 
\begin{align*}
&\rho_{d=3}(1,1,-1)  = r-1, && \rho_{d=3}(-1,1,-1)  = r-3, \\
&\rho_{d=3}(1,1,1)  = 3r/2-2, && \rho_{d=3}(-1,1,1) = 3r/2 - 4, \\
&\rho_{d=3}(1,-1,-1)  = r/2, & &\rho_{d=3}(-1,-1,-1)  = r/2-2, \\
&\rho_{d=3}(1,-1,1)  = r-1, & &\rho_{d=3}(-1,-1,1) = r-3.
\end{align*}
Note that even though 
\begin{align*}
\rho_{d=3}(1,1,-1) & = \rho_{d=3}(1,-1,1), \\
\rho_{d=3}(-1,1,-1) & = \rho_{d=3}(-1,-1,1),
\end{align*} that $\mu,\nu$ are different implies that we have different estimates. We note that replacing $v \mapsto u,\ L^iu$ whenever $\sigma = 1, -1$ (respectively) is not enough to prove the estimates because the second factor in \eqref{KGder-d=3-border} is
\[ \|L^\alpha v\|_{\Sobw_{\rho_{pq3}(\mu,\nu)}^{1,3}} = \| L^\alpha v\|_{\Lebw_{\rho_{pq3}(\mu,\nu)}^3} +  \|L^\alpha v\|_{\mr \Sobw_{\rho_{pq3}(\mu,\nu)}^{1,3}}.\]
Consequently, special care must be taken to analyze the two \emph{different} derivative terms because the left hand sides in \eqref{pqrGNS-KGder-d=3-ver1} - \eqref{pqrGNS-KGder-d=3-ver4} are all with respect to the \emph{homogeneous} spaces $\mr \Sobw_*^{*,r}$. 

Fix $v = u$. When $\mu = 1, - 1$ we can estimate 
\[ \tau^{-1/3} \|L^\alpha u\|_{\Sobw_1^{1,3}} \lesssim \mathfrak{E}_{k+1}[u]\]
by using $\mathfrak{E}_k \le \mathfrak{E}_{k+1}$.  Arguing in the same way, when $\mu = \nu = 1$ one finds
\[ \tau^{-1/3} \| L^\alpha u \|_{\Sobw_{5/2}^{1,3}} \lesssim \mathfrak{E}_{k+1}[u]^{1/2} \cdot \mathfrak{E}_{k+2}[u]^{1/2}.\] 
For $\mu = \nu = -1$, on the other hand, we estimate
\[ \tau^{-1/3} \| L^\alpha u \|_{\Sobw_{-1/2}^{1,3}} \le \tau^{-1/3}\left( \| L^\alpha u\|_{\Lebw_{5/2}^{3}} + \| L^\alpha u \|_{\mr \Sobw_{-1/2}^{1,2}}\right)\lesssim \mathfrak{E}_{k}[u]^{1/2}\cdot \mathfrak{E}_{k+1}[u]^{1/2}.\]
The first term was controlled again using \eqref{pqrGNS-KGder-d=3-ver2}. Finally, when $\mu = -1, \nu = -1$ we see 
\begin{align*}
\tau^{-1/3} \| L^\alpha u \|_{\Sobw_{1}^{1,3}} & =\tau^{-1/3} \left( \|L^\alpha u\|_{\Lebw_1^3} + \| L^\alpha u\|_{\mr \Sobw_{1}^{1,3}}\right)\\
&  \lesssim \mathfrak{E}_{k}[u]^{1/2}\cdot  \mathfrak{E}_{k+1}[u]^{1/2} +  \mathfrak{E}_{k}[u]^{1/2}\cdot  \mathfrak{E}_{k+2}[u]^{1/2} \\
&\lesssim \mathfrak{E}_{k}[u]^{1/2}\cdot  \mathfrak{E}_{k+2}[u]^{1/2}.
\end{align*}
Using these estimates in \eqref{KGder-d=3-border} with $\sigma = 1$ proves \eqref{p3rGNS-KGder-ver1} - \eqref{p3rGNS-KGder-ver4} after appealing to the definition of the energy with the respective weights. 

Fix now $v = L^i u$. Then, arguing as above with $\mathfrak{E}_{k} \le \mathfrak{E}_{k+ 1}$ for arbitrary $k$ to control $\|L^\alpha L^i u\|_{\Sobw_{\rho_{pq3}(\mu,\nu)}^{1,3}}$, equation \eqref{KGder-d=3-border} with $\sigma = -1$ and the estimates \eqref{pqrGNS-KGder-d=3-ver1} - \eqref{pqrGNS-KGder-d=3-ver4} with the respective choices of $\mu,\nu = \pm 1$ prove \eqref{p3rGNS-KGder-ver5} - \eqref{p3rGNS-KGder-ver8}.

The time derivative estimates are more straight forward, the $\sigma = \pm 1$ cases are treated separately but similarly. The first factor in \eqref{KGder-d=3-border} is treated by 
\[\|L^\alpha u_t\|_{\Lebw_1^2} \le \tau^{1/2} \mathfrak{E}_k[u_t],\qquad \| L^\alpha u_t\|_{\Lebw_{-1}^2} \le \tau^{-1/2} \mathfrak{E}_k[u].\] 
Simply replacing $v \mapsto u_t$ in \eqref{KGder-d=3-border} and using \eqref{pqrGNS-KG-ut-d=3-ver1} - \eqref{pqrGNS-KG-ut-d=3-ver4} to control the second factor $\|L^\alpha u_t\|_{\Sobw_{\rho_{pq3}(\mu,\nu)}^{1,3}}$ with the respective choices of $\mu,\nu = \pm1$ proves \eqref{p3rGNS-KG-ut-ver1} - \eqref{p3rGNS-KG-ut-ver8} after appealing to the energies with the respective weights. 
\end{proof}
\begin{rmk}
	For the estimates when $r > 6$ in the previous proof we made the choice of interpolating $\Lebw^2_{*}$ with $\Sobw^{1,3}_{*}$, see \eqref{KGder-d=3-border}.
	As we saw previously in the wave case, specifically the proof of \eqref{p3rGNS-waveder-ver1}, we can also obtain estimates interpolating $\Lebw^6_*$ with $\Sobw_*^{1,3}$ instead. 
	For brevity we leave out these cases and various other interpolations.
\end{rmk}

\begin{prop}[$d= 4$] \label{KG-d=4}
When $r \in [2,4]$, 
\begin{align}
\tau^{-1/r}\| u\|_{\mr \Sobw_1^{k,r}(\Sigma_\tau)} & \lesssim \mathfrak{E}_k[u](\tau) \label{pqrGNS-KGder-d=4-ver1} ,\\
\tau^{-1/r}\| u\|_{\mr \Sobw_{2r-3}^{k,r}(\Sigma_\tau)} & \lesssim \left( \mathfrak{E}_k[u]\right)^{\frac{4-r}{r}} \cdot \left( \mathfrak{E}_{k+1}[u]\right)^{\frac{2r-4}{r}}\label{pqrGNS-KGder-d=4-ver2}, \\
\tau^{-1/r}\| u\|_{\mr \Sobw_{r-3}^{k+1,r}(\Sigma_\tau)} & \lesssim \left( \mathfrak{E}_k[u]\right)^{\frac{4-r}{r}} \cdot \left( \mathfrak{E}_{k+1}[u]\right)^{\frac{2r-4}{r}} \label{pqrGNS-KGder-d=4-ver3}, \\
\tau^{-1/r}\| u\|_{\mr \Sobw_{3r-7}^{k+1,r}(\Sigma_\tau)} & \lesssim \left( \mathfrak{E}_k[u]\right)^{\frac{4-r}{r}} \cdot \left( \mathfrak{E}_{k+2}[u]\right)^{\frac{2r-4}{r}}\label{pqrGNS-KGder-d=4-ver4}.
\end{align}
For the time derivatives, the following estimates hold:
\begin{align}
 \tau^{2 - 5/r} \| u_t\|_{\mr \Sobw_1^{k,r}(\Sigma_\tau)} & \lesssim \left( \mathfrak{E}_k[u_t](\tau)\right)^{\frac{4-r}{r}}\cdot \left(\mathfrak{E}_{k+1}[u](\tau)\right)^{\frac{2r-4}{r}},\label{pqrGNS-KG-ut-d=4-ver1} \\
  \tau^{-1/r}  \| u_t\|_{\mr \Sobw_{2r-3}^{k,r}(\Sigma_\tau)} & \lesssim \left( \mathfrak{E}_k[u_t](\tau)\right)^{\frac{4-r}{r}} \cdot \left( \mathfrak{E}_{k+1}[u_t](\tau)\right)^{\frac{2r-4}{r}}\label{pqrGNS-KG-ut-d=4-ver2} ,\\
 \tau^{1-1/r} \|u_t\|_{\mr \Sobw_{r-3}^{k,r}(\Sigma_\tau)} & \lesssim \left(\mathfrak{E}_k[u](\tau)\right)^{\frac{4-r}{r}} \cdot \left( \mathfrak{E}_{k+1}[u](\tau)\right)^{\frac{2r-4}{r}}, \label{pqrGNS-KG-ut-d=4-ver3}\\
 \tau^{-1 + 3/r} \| u_t\|_{\mr \Sobw_{3r-7}^{k,r}(\Sigma_\tau)} & \lesssim \left( \mathfrak{E}_k[u](\tau)\right)^{\frac{4-r}{r}} \cdot \left( \mathfrak{E}_{k+1}[u_t](\tau)\right)^{\frac{2r-4}{r}}  \label{pqrGNS-KG-ut-d=4-ver4}.
\end{align}
When $r > 4$, the following estimates hold:
 \begin{align}
 \tau^{-1/r} \| u\|_{\mr \Sobw_{r-1}^{k,2}(\Sigma_\tau)} & \lesssim \begin{cases} \left( \mathfrak{E}_k[u] (\tau)\right)^{2/r} \cdot \left( \mathfrak{E}_{k+1}[u]\right)^{\frac{r-2}{r}},  \\
\left( \mathfrak{E}_k[u] (\tau)\right)^{\frac{r+2}{2r}} \cdot \left( \mathfrak{E}_{k+1}[u](\tau)\right)^{\frac{r-2}{2r}},
 \end{cases} \label{p4rGNS-KGder-ver1}  \\
 \tau^{-1/r} \| u\|_{\mr \Sobw_{2r-3}^{k,2}(\Sigma_\tau)} & \lesssim \begin{cases}
 \left( \mathfrak{E}_k[u] (\tau)\right)^{2/r}\cdot \left( \mathfrak{E}_{k+1}[u] (\tau)^{1/2} \cdot  \mathfrak{E}_{k+2}[u](\tau)^{1/2}\right)^{\frac{r-2}{r}},\\
 \left( \mathfrak{E}_k[u] (\tau)\right)^{\frac{r+2}{2r}} \cdot \left( \mathfrak{E}_{k+2}[u]\right)^{\frac{r-2}{2r}},
 \end{cases} \label{p4rGNS-KGder-ver2} \\
  \tau^{-1/r} \| u\|_{\mr \Sobw_{r-3}^{k+1,2}(\Sigma_\tau)} & \lesssim  \begin{cases}
  \left( \mathfrak{E}_k[u] (\tau)\right)^{2/r} \cdot\left( \mathfrak{E}_{k+2}[u] (\tau)\right)^{\frac{r-2}{r}}, \\
   \left( \mathfrak{E}_k[u] (\tau)\right)^{2/r} \cdot\left( \mathfrak{E}_{k+1}[u] (\tau)^{1/2} \cdot  \mathfrak{E}_{k+2}[u](\tau)^{1/2}\right)^{\frac{r-2}{r}}, 
   \end{cases} \label{p4rGNS-KGder-ver3} \\
   \tau^{-1/r} \| u\|_{\mr \Sobw_{2r-5}^{k+1,2}(\Sigma_\tau)} & \lesssim \begin{cases}
   \left( \mathfrak{E}_k[u] (\tau)\right)^{2/r} \cdot\left( \mathfrak{E}_{k+2}[u] (\tau)^{1/2} \cdot  \mathfrak{E}_{k+3}[u](\tau)^{1/2}\right)^{\frac{r-2}{r}}, \\
    \left( \mathfrak{E}_k[u] (\tau)\right)^{2/r} \cdot\left( \mathfrak{E}_{k+1}[u] (\tau)^{1/2} \cdot  \mathfrak{E}_{k+3}[u](\tau)^{1/2}\right)^{\frac{r-2}{r}}.
    \end{cases} \label{p4rGNS-KGder-ver4} 
  \end{align}
For the time derivatives, we have:
\begin{align}
 \tau^{1- 3/r} \| u_t\|_{\mr \Sobw_{r-1}^{k,2}(\Sigma_\tau)} & \lesssim \begin{cases}
 \left( \mathfrak{E}_k[u_t] (\tau)\right)^{2/r} \cdot\left( \mathfrak{E}_{k+1}[u_t] (\tau)^{1/2} \cdot  \mathfrak{E}_{k+2}[u](\tau)^{1/2}\right)^{\frac{r-2}{r}},  \\
 \left( \mathfrak{E}_k[u_t] (\tau)\right)^{2/r} \cdot\left( \mathfrak{E}_{k+1}[u] (\tau)^{1/2} \cdot  \mathfrak{E}_{k+2}[u](\tau)^{1/2}\right)^{\frac{r-2}{r}},
 \end{cases}  \label{p4rGNS-KG-ut-ver1} \\
 \tau^{-1/r} \| u_t\|_{\mr \Sobw_{2r-3}^{k,2}(\Sigma_\tau)} & \lesssim \begin{cases}
\left( \mathfrak{E}_k[u_t] (\tau)\right)^{2/r} \cdot\left( \mathfrak{E}_{k+1}[u_t] (\tau)^{1/2} \cdot  \mathfrak{E}_{k+2}[u_t](\tau)^{1/2}\right)^{\frac{r-2}{r}}, \\
 \left( \mathfrak{E}_k[u_t] (\tau)\right)^{2/r} \cdot\left( \mathfrak{E}_{k+1}[u] (\tau)^{1/2} \cdot  \mathfrak{E}_{k+2}[u_t](\tau)^{1/2}\right)^{\frac{r-2}{r}}, 
 \end{cases}  \label{p4rGNS-KG-ut-ver2}
\\
 \tau^{1-1/r} \| u_t\|_{\mr \Sobw_{r-3}^{k,2}(\Sigma_\tau)} & \lesssim \begin{cases}
 \left( \mathfrak{E}_k[u] (\tau)\right)^{2/r} \cdot\left( \mathfrak{E}_{k+1}[u_t] (\tau)^{1/2} \cdot  \mathfrak{E}_{k+2}[u](\tau)^{1/2}\right)^{\frac{r-2}{r}}, \\
  \left( \mathfrak{E}_k[u] (\tau)\right)^{2/r} \cdot \left( \mathfrak{E}_{k+1}[u] (\tau)^{1/2} \cdot  \mathfrak{E}_{k+2}[u](\tau)^{1/2}\right)^{\frac{r-2}{r}},
\end{cases} \label{p4rGNS-KG-ut-ver3} 
\\
   \tau^{1/r} \| u_t\|_{\mr \Sobw_{2r-5}^{k,2}(\Sigma_\tau)} & \lesssim \begin{cases}
  \left( \mathfrak{E}_k[u] (\tau)\right)^{2/r} \cdot\left( \mathfrak{E}_{k+1}[u_t] (\tau)^{1/2} \cdot  \mathfrak{E}_{k+2}[u_t](\tau)^{1/2}\right)^{\frac{r-2}{r}}, \\
 \left( \mathfrak{E}_k[u] (\tau)\right)^{2/r} \cdot \left( \mathfrak{E}_{k+1}[u] (\tau)^{1/2} \cdot  \mathfrak{E}_{k+2}[u_t](\tau)^{1/2}\right)^{\frac{r-2}{r}}.
\end{cases}  \label{p4rGNS-KG-ut-ver4}
\end{align}

\end{prop}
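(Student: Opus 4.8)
The plan is to reproduce, with $d = 4$, the two-part argument used in the proof of Proposition~\ref{KG-d=3}; no genuinely new idea is needed. Throughout, $\alpha$ denotes a $k$-tuple with entries in $\{1,2,3,4\}$ and $v$ an arbitrary function.

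For the range $r \in [2,4]$ I would apply the non-borderline interpolation \eqref{eq:wtpqrGNS} with $d = 4$ and $q = p = 2$, so that $p^* = 2^* = 4$ and $\theta$ solves $1/r = \theta/2 + (1-\theta)/4$, i.e.\ $\theta = (4-r)/r$. Solving $1 + \beta q = \mu$ and $1 - p + \alpha p = \nu$ for $\mu,\nu \in \{\pm 1\}$ produces the four left-hand weights
\[ \rho_{pqr}(1,-1) = 1, \quad \rho_{pqr}(1,1) = 2r-3, \quad \rho_{pqr}(-1,-1) = r-3, \quad \rho_{pqr}(-1,1) = 3r-7, \]
and the inequality, with $L^\alpha v$ in place of $u$, reads
\[ \tau^{1/2 - 1/r}\, \| L^\alpha v\|_{\Lebw_{\rho_{pqr}(\mu,\nu)}^r} \lesssim \bigl( \| L^\alpha v\|_{\Lebw_\mu^2}\bigr)^{\frac{4-r}{r}} \cdot \bigl( \| L^\alpha v\|_{\Sobw_\nu^{1,2}}\bigr)^{\frac{2r-4}{r}}. \]
Exactly as in dimension $3$, the second factor is the \emph{inhomogeneous} norm $\Sobw^{1,2}_\nu$; one absorbs it into the energy using the mass term (spending one extra derivative for the weight $\nu = 1$), so that $\|L^\alpha u\|_{\Sobw^{1,2}_{-1}} \lesssim \tau^{1/2}\mathfrak{E}_k[u]$ while $\|L^\alpha u\|_{\Sobw^{1,2}_{1}} \lesssim \tau^{1/2}\mathfrak{E}_{k+1}[u]$. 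Substituting $v \mapsto u$ and $v \mapsto u_t$ then yields \eqref{pqrGNS-KGder-d=4-ver1}, \eqref{pqrGNS-KGder-d=4-ver2}, \eqref{pqrGNS-KG-ut-d=4-ver1}, \eqref{pqrGNS-KG-ut-d=4-ver2}, and substituting instead $v \mapsto L^i u$ and $v \mapsto u_t$ yields the $(k+1)$-order statements \eqref{pqrGNS-KGder-d=4-ver3}, \eqref{pqrGNS-KGder-d=4-ver4}, \eqref{pqrGNS-KG-ut-d=4-ver3}, \eqref{pqrGNS-KG-ut-d=4-ver4}; in the time-derivative cases one distributes the roles of $\mathfrak{E}_\bullet[u]$ and $\mathfrak{E}_\bullet[u_t]$ according to the structure of the Klein-Gordon energy, using that $\|L^\alpha u_t\|_{\Lebw^2_1}\lesssim\tau^{1/2}\mathfrak{E}_\bullet[u_t]$ but $\|L^\alpha u_t\|_{\Lebw^2_{-1}}\lesssim\tau^{-1/2}\mathfrak{E}_\bullet[u]$.

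For the range $r > 4$ I would instead invoke the borderline inequality \eqref{eq:pdrGNAWS} with $d = 4$ (hence $p = 4$, $1^* = 4/3$) and $q \in \{2,4\}$, the two choices of $q$ accounting for the two cases displayed in each of \eqref{p4rGNS-KGder-ver1}--\eqref{p4rGNS-KG-ut-ver4}. For $q = 2$ one solves
\[ 1 + \beta q + \alpha q = \sigma, \qquad (1-d)(1 + \theta\beta r) + \alpha d = \rho_{pq4}(\mu,\nu), \qquad \frac{1}{r} = \frac{\theta}{q} + \frac{1-\theta}{r + 1^*}, \]
with $\sigma \in \{\pm 1\}$ and $\rho_{pq4}(\mu,\nu)$ one of the four weights above read at general $r$, obtaining
\[ \bigl(\tau^{1/4}\bigr)^{\frac{r-2}{r}} \| L^\alpha v\|_{\Lebw_{\rho_{d=4}(\sigma,\mu,\nu)}^r} \lesssim \bigl( \| L^\alpha v\|_{\Lebw_\sigma^2}\bigr)^{2/r} \cdot \bigl( \| L^\alpha v\|_{\Sobw_{\rho_{pq4}(\mu,\nu)}^{1,4}}\bigr)^{\frac{r-2}{r}}, \]
with the analogous identity (first factor in $\Lebw^4$) when $q = 4$. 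The new ingredient, the same device as in the $d=3$, $r>6$ part, is that the second factor is the inhomogeneous norm $\Sobw^{1,4}_\rho = \Lebw^4_\rho + \mr\Sobw^{1,4}_\rho$; since $2^* = 4$ in dimension $4$ one controls $\|L^\alpha v\|_{\Lebw^4_\rho}$ by a weighted $\mr\Sobw^{1,2}$ norm and $\|L^\alpha v\|_{\mr\Sobw^{1,4}_\rho}$ by a weighted $\mr\Sobw^{2,2}$ norm via \eqref{eq:pGNS}--\eqref{eq:wtpGNS}, and then absorbs lower-order pieces using $\mathfrak{E}_k \le \mathfrak{E}_{k+1}$ and the mass term --- in the worst cases re-applying an $r \le 4$ estimate from the first part --- so that the result is expressed through $\mathfrak{E}_{k+1}$, $\mathfrak{E}_{k+2}$, and occasionally $\mathfrak{E}_{k+3}$. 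Substituting $v \mapsto u$ when $\sigma = 1$, $v \mapsto L^i u$ when $\sigma = -1$, and $v \mapsto u_t$ for the time-derivative estimates then gives \eqref{p4rGNS-KGder-ver1}--\eqref{p4rGNS-KG-ut-ver4}.

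As with Proposition~\ref{KG-d=3}, the main obstacle is purely bookkeeping: correctly propagating the weights through the composition of the borderline interpolation with the endpoint embeddings \eqref{eq:pGNS}--\eqref{eq:wtpGNS}, and identifying which levels of the energy hierarchy ($k$, $k+1$, $k+2$, $k+3$) and which of $\mathfrak{E}_\bullet[u]$, $\mathfrak{E}_\bullet[u_t]$ survive once the inhomogeneous $\Sobw^{1,4}$ factor on the right is split into its homogeneous constituents and each is re-embedded into the energy norms. No idea beyond the $d=3$ argument is required, so the proof reduces to a careful enumeration of the sign choices $\sigma,\mu,\nu=\pm1$ and the two values of $q$.
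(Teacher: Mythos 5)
Your treatment of the range $r\in[2,4]$ coincides with the paper's: the non-borderline inequality \eqref{eq:wtpqrGNS} with $d=4$, $p=q=2$, the four sign choices $(\mu,\nu)=\pm1$ giving the weights $1,\,2r-3,\,r-3,\,3r-7$, the mass term absorbing the inhomogeneous $\Sobw^{1,2}_\nu$ factor, and the substitutions $v\mapsto u,\,L^iu,\,u_t$. That half is fine.

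The range $r>4$ is where your plan diverges from the paper and, as structured, does not prove the statement. You attribute the two cases in each of \eqref{p4rGNS-KGder-ver1}--\eqref{p4rGNS-KG-ut-ver4} to the two choices $q\in\{2,4\}$ in the borderline inequality \eqref{eq:pdrGNAWS}. In the paper \emph{both} cases come from $q=2$: with $d=4$ there are eight sign triples $(\sigma,\mu,\nu)$ but only four distinct left-hand weights $\rho_{d=4}$, so each displayed weight is realized by two different $(\mu,\nu)$, and the two cases record the two different ways the resulting inhomogeneous factor $\Sobw^{1,4}_{\rho_{pq4}(\mu,\nu)}$ gets re-expressed through the energy hierarchy via the $r=4$ endpoint instances of the first part (this is exactly the content of the remark following the proposition, and note that $\rho_{pq4}(\mu,\nu)$ must be read at $r=4$, not at general $r$, since the second factor is an $L^4$-based norm). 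Your $q=4$ alternative produces a genuinely different family of estimates: the exponents become $4/r$ and $(r-4)/r$ (as in the wave-equation variants \eqref{p4rGNS-wave-ver2}, \eqref{p4rGNS-waveder-ver2}), whereas every case claimed in the proposition has the $q=2$ structure $2/r$ and $(r-2)/r$ (the second cases being of the form $\mathfrak{E}_k^{2/r}\bigl(\mathfrak{E}_\bullet^{1/2}\mathfrak{E}_\bullet^{1/2}\bigr)^{(r-2)/r}$). Since the energies are monotone in $k$, a bound of the form $\mathfrak{E}_k^{4/r}\mathfrak{E}_{k+1}^{(r-4)/r}$ does \emph{not} imply, e.g., the second case of \eqref{p4rGNS-KGder-ver1}, $\mathfrak{E}_k^{(r+2)/(2r)}\mathfrak{E}_{k+1}^{(r-2)/(2r)}$, once $r\geq 6$ (it places strictly more weight on the larger energy), so half of the claimed estimates would be left unproven on most of the range $r>4$. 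The fix is simply to drop the $q=4$ branch, run \eqref{eq:pdrGNAWS} with $q=2$ for all eight triples $(\sigma,\mu,\nu)$, and, exactly as in the $d=3$ proof, distinguish the estimates by how the $\Sobw^{1,4}$ factor (with weight $1$ or $5$) is controlled by the $r=4$ estimates of the first part.
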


\begin{proof}
The proofs of these estimates are treated in the same way as the proof of Proposition \ref{KG-d=3}, so we merely highlight the differences. For estimates \eqref{pqrGNS-KGder-d=4-ver1} - \eqref{pqrGNS-KG-ut-d=4-ver4} we solve \eqref{eq:wtpqrGNS} with
\begin{gather*}
	d = 4\\
	q = 2\\ 
	p = 2\\
	1 + \beta q = \mu\\
	1-p + \alpha p =  \nu\\
	1/r = \theta /q + (1-\theta) /p^*,
\end{gather*}
where $\mu,\nu$ can again take the values $\pm 1$. Denoting the weight 
\[\rho_{pqr}(\mu,\nu) \eqdef 1 + (\theta \beta + (1-\theta)\cdot \alpha)r,\]
the interpolation inequality yields
\begin{equation}
\tau^{1/2 - 1/r} \| L^\alpha v\|_{\Lebw_{\rho_{pqr}(\mu,\nu)}^r} \lesssim \left( \| L^ \alpha v\|_{\Lebw_\mu^2}\right)^{\frac{4-r}{r}}\cdot \left( \| L^\alpha v\|_{\Sobw^{1,2}_\nu}\right)^{\frac{2r-4}{r}}. \label{KGder-d=4}
\end{equation}
One explicitly computes the weights as 
\[ \rho_{pqr}(1,-1) = 1, \quad \rho_{pqr}(1,1) = 2r-3, \quad \rho_{pqr}(-1,-1) = r-3, \quad \rho_{pqr}(-1,1) = 3r-7.\]
Replacing $\mu,\nu = \pm 1$ and $v \mapsto (u,u_t)$ or $(L^i u,u_t)$ in \eqref{KGder-d=4} then proves \eqref{pqrGNS-KGder-d=4-ver1} - \eqref{pqrGNS-KG-ut-d=4-ver4} by following the same analysis as in the proof of Proposition \ref{KG-d=3}.

For the estimates when $r > 4$, we appeal to the borderline \eqref{eq:pdrGNAWS} inequality with 
\begin{gather*}
		d = 4,\\
		q = 2,\\
		1 + \beta q + \alpha q = \sigma,\\
		(1-d)(1 + \theta\beta r) + \alpha d = \rho_{pq4}(\mu,\nu),\\
		1/r = \theta/q + (1-\theta)/(r + 1^*),
\end{gather*}
where $\sigma$ can take the values $\pm 1$ and $\rho_{pq4}(\mu,\nu)$ is as above.  Denoting the weight 
\[\rho_{d=4}(\sigma,\mu,\nu) \eqdef 1 + \theta\beta r + \alpha r,\] the borderline inequality yields
\begin{equation}
\left(\tau^{1/4}\right)^{\frac{r-2}{r}}\|L^\alpha v\|_{\Lebw_{\rho_{d=4}(\sigma,\mu,\nu)}^r}\lesssim \left( \|L^\alpha v\|_{\Lebw_\sigma^2}\right)^{2/r} \cdot \left( \| L^\alpha v\|_{\Sobw_{\rho_{pq4}(\mu,\nu)}^{1,4}}\right)^{\frac{r-2}{r}}.\label{KGder-d=4-border}
\end{equation}
This inequality is valid for $r > 2$ so in particular $r > 4$. One explicitly computes the weights 
\begin{align*}
&\rho_{d=4}(1,1,-1)  = r-1, && \rho_{d=4}(-1,1,-1)  = r-3, \\
&\rho_{d=4}(1,1,1)  = 2r-3, && \rho_{d=4}(-1,1,1) = 2r-5, \\
&\rho_{d=4}(1,-1,-1)  = r-1, & &\rho_{d=4}(-1,-1,-1)  =r-3, \\
&\rho_{d=4}(1,-1,1)  = 2r-3, & &\rho_{d=4}(-1,-1,1) = 2r-5.
\end{align*}
Replacing $\sigma,\mu,\nu = \pm 1$ and $v \mapsto (u,u_t)$ or $(L^i u,u_t)$ in \eqref{KGder-d=4} then proves \eqref{p4rGNS-KGder-ver1} - \eqref{p4rGNS-KG-ut-ver4} by following the same analysis as in the proof of Proposition \ref{KG-d=3}.
\end{proof}
\begin{rmk}
We note that even though the estimates for $r > 4$ in Proposition \ref{KG-d=4} had almost the same proofs as the ones for $r > 6$ in Proposition \ref{KG-d=3}, there is a notable difference between the two: there are {only} \emph{four} distinct weights for $\rho_{d=4}(\pm1 , \pm 1, \pm 1)$ while there are \emph{six} distinct weights for $\rho_{d=3}(\pm 1,\pm 1, \pm 1)$. The reason for this is that we controlled the second factor of \eqref{KGder-d=3} using the non-borderline estimates derived from \eqref{eq:wtpqrGNS} with $3 \in [2,6]$. On the other hand, the second factor of $\eqref{KGder-d=4}$ was estimated with the \emph{end point} $4 \in [2,4]$. 
\end{rmk}

\section{A nonlinear application}
In a previous paper we studied the stability of traveling wave solutions to the membrane equation \cite{Planewaves}. Key to our understanding there is the study of the following semilinear problem. 
\begin{equation}\label{eq:semi:model}
	\Box \phi = \Upsilon(t - x^1) (\partial_t \phi + \partial_{x^1} \phi)^2,
\end{equation}
where $\Upsilon \in C^\infty_0(\R)$ is arbitrary. We wish to study here the small-data Cauchy problem for \eqref{eq:semi:model} on $\R^{1+d}$ with $d = 2,3$, where for convenience we will prescribe the data at $t = 2$, such that
\begin{equation}
	\phi(2,x) = \phi_0(x), \quad \partial_t \phi(2,x) = \phi_1(x)
\end{equation}
for some $\phi_0,\phi_1\in C^\infty_0(B(0,1))$. 
For convenience of notation we will write $v = t - x^1$. 
Note that 
\[ L^1 v = -v; \quad L^i v = x^i.\]

By standard local existence theory and finite speed of propagation we can assume that for sufficiently small initial data, the solution exists up to $\Sigma_2$. The breakdown criterion for the wave equation implies that so long as we can show that the first derivatives $|L^i\phi|$ and $|\partial_t\phi|$ remain bounded on $\Sigma_\tau$ for all $\tau> 2$, we can guarantee global existence of solutions. 
A sufficient condition for global existence is therefore \emph{a priori bounds on the second-order energies}, in view of the (Morrey-) Sobolev inequalities such as those described in \cite{Wong2017} and recalling we fixed $d = 2,3$. 

Following our previous work \cite[Sections 4 and 5]{Planewaves} we will study the prolonged system satisfied by both $\phi$ and its derivative $L^1\phi$. 
First, observe that 
\[ \partial_t \phi + \partial_{x^1}\phi = \frac{1}{t} L^1\phi + \frac{v}{t} \partial_t \phi \]
Since $L^1$ is Killing, we see that after a small computation
\begin{multline*}
	\Box (L^1\phi) = -(v \Upsilon'(v) + \Upsilon(v)) \cdot \frac{1}{t^2} (L^1 \phi + v \partial_t \phi)^2 \\
	+ \frac{1}{t^2} \Upsilon(v) (L^1 \phi + v\partial_t\phi) (L^1 L^1\phi + \partial_t L^1 \phi).
\end{multline*}
Writing $\psi = L^1\phi$, then we are down to considering the following system of nonlinear wave equations
\begin{equation}
	\left\{
		\begin{aligned}
			\Box \phi &= \frac{1}{t^2} \Upsilon(v) (\psi + v \phi_t)^2; \\
			\Box \psi &= - \frac{1}{t^2} (v\Upsilon'(v) + \Upsilon(v)) (\psi + v\phi_t)^2 + \frac{1}{t^2} (\psi + v \phi_t) (L^1\psi + v \psi_t).
		\end{aligned}
	\right.
\end{equation}

Next, letting $v_<$ and $v_>$ be real numbers such that $\supp \Upsilon \subset [v_<, v_>]$, we can define as in \cite[Section 5]{Planewaves} the schematic notation $\mathscr{P}_k$ which will stand for any arbitrary function $f$ on $\R^{1+d}$ satisfying
\begin{itemize}
	\item $\supp f \subset \{ v\in [v_<, v_>]\}$; and
	\item restricting to the forward light-cone $\{t > |x|+1\}$, we have the uniform bound $|f| \lesssim \tau^k$. 
\end{itemize}
Quite clearly if $f(t,x) = \chi(v)$ (where $\chi$ is any compactly supported smooth function), then $f = \mathscr{P}_0$. 
By the computations in \cite[Sections 3.2, 6.2]{Planewaves}, we further have that higher $L$ derivatives of $f$ are
\begin{equation}
	L^\alpha f = \mathscr{P}_{|\alpha|}.
\end{equation}
Note, as we saw already in the derivation of our prolonged system, $L^1\Upsilon(v) = -v \Upsilon'(v) = \mathscr{P}_0$, so the above bound is not optimal when differentiating in $L^1$. 

\begin{rmk}[Bounds for $w_\tau$]
	Notice that on the subset $\{ v\in [v_<, v_>]\} \cap \{t > |x| + 1\}$, we have the following comparison
	\begin{equation}\label{eq:ttau:comp}
		\tau^2 \approx w_\tau.
	\end{equation}
	We will make use of this throughout.
\end{rmk}

Therefore if we apply the weighted commutator algebra developed in \cite[Section 3.2]{Planewaves}, we see that the higher derivatives of $\phi$ and $\psi$ satisfy the following system of differential inequalities:
\begin{equation}\label{eq:main:NLsystem}
	\left\{
		\begin{aligned}
			\bigl|\Box L^\alpha \phi\bigr| &\lesssim \frac{1}{t^2} \sum_{k + |\beta| + |\gamma| \leq |\alpha|} \bigl| \mathscr{P}_k \cdot(L^\beta \psi + \partial_t (L^\beta \phi))\cdot(L^\gamma \psi + \partial_t (L^\gamma\phi)) \bigr|;\\
			\bigl|\Box L^\alpha \psi\bigr| & \lesssim \frac{1}{t^2} \sum_{k + |\beta| + |\gamma| \leq |\alpha|} \bigl| \mathscr{P}_k \cdot (L^\beta \psi + \partial_t (L^\beta \phi))\cdot (L^\gamma L^1\psi + \partial_t (L^\gamma\psi))\bigr|.
		\end{aligned}
	\right.
\end{equation}
We remark that the coordinate function $t$ is equivalent to $w_\tau$ when restricted to $\Sigma_\tau$ (by definition). 
Below we will discuss the \textit{a priori} estimates that can be proven for the system \eqref{eq:main:NLsystem}. 
Specifically, we will describe the improvements that can be made as a consequence of the interpolation inequalities described in Section \ref{sect:lin:est}.

\subsection{The basic energy estimates}
We will denote by 
\[ \mathcal{E}_\tau[u] = \int_{\Sigma_{\tau}} \frac{1}{\tau w_\tau} \sum |L^i u|^2 + \frac{\tau}{w_\tau} |\partial_tu|^2 ~\dvol = \frac{1}{\tau} \|u\|_{\mr\Sobw_{-1}^{1,2}(\Sigma_\tau)}^2 + \tau \|u_t\|_{\Lebw_{-1}^2(\Sigma_\tau)}^2\]
where $u\in \R^{1+d}\to\R$. This energy integral satisfies the basic energy inequality for wave equations: if $\tau_0 < \tau_1$ we have
\begin{equation}
	\mathcal{E}_{\tau_1}[u] - \mathcal{E}_{\tau_0}[u] \lesssim \int_{\tau_0}^{\tau_1}\int_{\Sigma_\tau} |\Box u| \cdot |u_t| \dvol_{\Sigma_\tau} \D\tau.
\end{equation}
By Cauchy-Schwarz, we then have 
\begin{equation}
	\mathcal{E}_{\tau_1}[u] - \mathcal{E}_{\tau_0}[u] \lesssim \int_{\tau_0}^{\tau_1} \tau^{-1/2} \|\Box u\|_{\Lebw_1^2(\Sigma_\tau)} \mathcal{E}_{\tau}^{1/2}[u] ~\D\tau.
\end{equation}

Returning to \eqref{eq:main:NLsystem}, let us introduce the notations
\begin{gather}
	\mathfrak{E}_k(\tau) := \tau^{-1/2} \sum_{j = 1}^{k+1} \|\phi\|_{\mr\Sobw_{-1}^{j,2}(\Sigma_\tau)} + \tau^{1/2} \|\phi_t \|_{\Sobw_{-1}^{k,2}(\Sigma_\tau)},\\
	\mathfrak{F}_k(\tau) := \tau^{-1/2} \|\psi\|_{\Sobw_{-1}^{k+1,2}(\Sigma_\tau)} + \tau^{1/2} \|\psi_t \|_{\Sobw_{-1}^{k,2}(\Sigma_\tau)}.
\end{gather}
Using the commutator algebra properties (specifically those of $[L^i,\partial_t]$) described in \cite[Section 3.2]{Planewaves}, we see that 
\begin{gather*}
	(\mathfrak{E}_k(\tau))^2 \approx \sum_{|\alpha| \leq k} \mathcal{E}_\tau[L^\alpha \phi], \\
	(\mathfrak{F}_k(\tau))^2 \lesssim \sum_{|\alpha| \leq k} \mathcal{E}_\tau[L^\alpha \psi] + \mathcal{E}_\tau[\phi],\\
	\sum_{|\alpha| \leq k} \mathcal{E}_\tau[L^\alpha \psi] \lesssim (\mathfrak{F}_k(\tau))^2.
\end{gather*}
So our fundamental energy estimates read as
\begin{gather}
	\label{eq:energest:e}\mathfrak{E}_k(\tau_1)^2 - \mathfrak{E}_k(\tau_0)^2 \lesssim \int_{\tau_0}^{\tau_1} \tau^{-1/2} \|\Box \phi\|_{\Sobw_1^{k,2}} \mathfrak{E}_k(\tau) ~\D\tau,\\
	\label{eq:energest:f}\mathfrak{F}_k(\tau_1)^2 - \mathfrak{F}_k(\tau_0)^2 \lesssim \int_{\tau_0}^{\tau_1} \tau^{-1/2} \|\Box \psi\|_{\Sobw_1^{k,2}} \mathfrak{F}_k(\tau) ~\D\tau + \int_{\tau_0}^{\tau_1} \tau^{-1/2} \|\Box \phi\|_{\Lebw_1^{2}} \mathfrak{E}_0(\tau) ~\D\tau.
\end{gather}
Finally, using \eqref{eq:main:NLsystem}, we can estimate the inhomogeneities by
\begin{equation}\label{eq:main:NLest1}
	\left\{
		\begin{aligned}
			\| \Box\phi\|_{\Sobw_1^{k,2}} &\lesssim \sum_{j + |\beta| + |\gamma| \leq k} \Bigl\| \mathscr{P}_j \cdot (L^\beta\psi + \partial_t (L^\beta \phi)) \cdot (L^\gamma \psi + \partial_t(L^\gamma\phi)) \Bigr\|_{\Lebw_{-3}^{2}}, \\
			\| \Box\psi\|_{\Sobw_1^{k,2}} &\lesssim \sum_{j + |\beta| + |\gamma| \leq k} \Bigl\| \mathscr{P}_j \cdot (L^\beta\psi +          \partial_t (L^\beta \phi)) \cdot (L^\gamma L^1\psi + \partial_t(L^\gamma\psi)) \Bigr\|_{\Lebw_{-3}^{2}}.
		\end{aligned}
	\right.
\end{equation}
Note that we have absorbed the $t^{-2}$ weight into the weighted Lebesgue space on the right. 

\subsection{The bootstrap using only Morrey-Sobolev-type estimates}
In this section we will estimate the terms in \eqref{eq:main:NLest1} using only the Morrey-Sobolev-type inequality \eqref{Morrey}, when dimension $d = 2$ or $3$. 
In these cases we have
\[ \tau^{1/2} \| w_\tau^{d/2-1} u\|_{L^\infty(\Sigma_\tau)} \lesssim \|u\|_{\Sobw_{-1}^{2,2}(\Sigma_\tau)}.\]
First we treat the nonlinearity for $\Box \phi$. By symmetry we can assume that $|\beta| \leq |\gamma|$ in \eqref{eq:main:NLest1}. 
This implies
\begin{multline*}
	\Bigl\| \mathscr{P}_j \cdot (L^\beta\psi + \partial_t (L^\beta \phi)) \cdot (L^\gamma \psi + \partial_t(L^\gamma\phi)) \Bigr\|_{\Lebw_{-3}^{2}}\\
	\lesssim \tau^{j-d} \| w_\tau^{d/2 - 1} (L^\beta \psi + \partial_t (L^\beta \phi)) \|_{L^\infty} \| L^\gamma \psi + \partial_t (L^\gamma\phi) \|_{\Lebw_{-1}^2}
\end{multline*}
in which derivation we freely used \eqref{eq:ttau:comp}. Our Morrey-type inequality implies then
\[ \lesssim \tau^{j-d} \left( \mathfrak{F}_{1+|\beta|}(\tau) + \tau^{-1} \mathfrak{E}_{2+|\beta|}(\tau)\right)\left( \tau^{1/2} \mathfrak{F}_{|\gamma| - 1}(\tau) + \tau^{-1/2} \mathfrak{E}_{|\gamma|}(\tau) \right) \]
when $|\gamma| > 0$. When $|\beta| = |\gamma| = 0$ we have instead 
\[ \lesssim \tau^{j-d+1/2} \left( \mathfrak{F}_{1}(\tau) + \tau^{-1} \mathfrak{E}_{2}\right) \mathfrak{E}_0(\tau).\]
We summarize our result in the following proposition.
\begin{prop}[Estimates for $\Box \phi$] \label{prop:morrey:boxphi}
	Fix $d = 2$ or $3$, then 
	\begin{equation}
		\|\Box \phi\|_{\Lebw_{1}^2} \lesssim \tau^{1/2 - d} (\mathfrak{F}_1(\tau) + \tau^{-1} \mathfrak{E}_2(\tau)) \mathfrak{E}_0(\tau).
	\end{equation}
	When $k > 0$ we also have
	\begin{multline}
		\| \Box\phi\|_{\Sobw_1^{k,2}} \lesssim \tau^{k + 1/2 - d} (\mathfrak{F}_1(\tau) + \tau^{-1}\mathfrak{E}_2(\tau)) \mathfrak{E}_0(\tau) \\
		+ \sum_{j = 0}^{k-1} \sum_{\ell = \lceil (k-j)/2 \rceil}^{k-j} \tau^{j - d}\left(\mathfrak{F}_{1 + k - j - \ell}(\tau) + \tau^{-1} \mathfrak{E}_{2+k - j - \ell}(\tau)\right)\left(\tau^{1/2}\mathfrak{F}_{\ell - 1}(\tau) + \tau^{-1/2} \mathfrak{E}_{\ell}(\tau)\right).
	\end{multline}
\end{prop}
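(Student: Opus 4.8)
The pointwise estimates for the individual summands of \eqref{eq:main:NLest1} have essentially been recorded already in the two displays immediately preceding the statement, so the remaining work is combinatorial. Concretely, the plan is: start from \eqref{eq:main:NLest1}; for a single summand $\mathscr{P}_j\cdot(L^\beta\psi+\partial_t L^\beta\phi)\cdot(L^\gamma\psi+\partial_t L^\gamma\phi)$ use the symmetry under $\beta\leftrightarrow\gamma$ of the quadratic nonlinearity in \eqref{eq:main:NLsystem} to assume $|\beta|\le|\gamma|$, then place the lower-order factor in $L^\infty$ via the Morrey inequality \eqref{Morrey} and the higher-order factor in $\Lebw_{-1}^2$. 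Pulling out $\|\mathscr{P}_j\|_{L^\infty}\lesssim\tau^{j}$ on the support, using $w_\tau\approx\tau^2$ there (by \eqref{eq:ttau:comp}), and the weight identity $w_\tau^{-3}=(w_\tau^{d/2-1})^2\,w_\tau^{-1}\,w_\tau^{-d}$, one gets
\[ \bigl\|\mathscr{P}_j\cdot(L^\beta\psi+\partial_t L^\beta\phi)\cdot(L^\gamma\psi+\partial_t L^\gamma\phi)\bigr\|_{\Lebw_{-3}^2}\lesssim\tau^{j-d}\bigl(\mathfrak{F}_{1+|\beta|}+\tau^{-1}\mathfrak{E}_{2+|\beta|}\bigr)\bigl(\tau^{1/2}\mathfrak{F}_{|\gamma|-1}+\tau^{-1/2}\mathfrak{E}_{|\gamma|}\bigr) \]
for $|\gamma|\ge 1$, and the bound $\lesssim\tau^{j-d+1/2}(\mathfrak{F}_1+\tau^{-1}\mathfrak{E}_2)\mathfrak{E}_0$ when $|\beta|=|\gamma|=0$; in the latter case one replaces $\psi$ by $L^1\phi$ so that $\|\psi\|_{\Lebw_{-1}^2}\lesssim\tau^{1/2}\mathfrak{E}_0$. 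Here one uses $\|\psi\|_{\Sobw_{-1}^{m+1,2}}\lesssim\tau^{1/2}\mathfrak{F}_m$ and $\|\phi_t\|_{\Sobw_{-1}^{m,2}}\lesssim\tau^{-1/2}\mathfrak{E}_m$, read off from the definitions of $\mathfrak{E}_k,\mathfrak{F}_k$.

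With these in hand, the proposition follows by summing over all triples $(j,\beta,\gamma)$ with $j+|\beta|+|\gamma|\le k$. For $k=0$ only the triple $(0,0,0)$ occurs, which is the first display. For $k>0$, split into the stratum $|\gamma|=0$ (which forces $|\beta|=0$) and the stratum $|\gamma|\ge 1$. In the first stratum only $j\in\{0,\dots,k\}$ remains, and since $\tau\gtrsim 1$ the sum $\sum_{j\le k}\tau^{j-d+1/2}$ is controlled by its top term $\tau^{k-d+1/2}$, producing the first line of the multline. In the second stratum, I would use the monotonicity of $\mathfrak{E}_\bullet,\mathfrak{F}_\bullet$ in the order index to reduce to the worst case $|\beta|+|\gamma|=k-j$ with $|\beta|=\min(k-j-|\gamma|,|\gamma|)$; setting $\ell=|\gamma|$, the constraint $|\beta|\le|\gamma|$ then reads $\ell\ge\lceil(k-j)/2\rceil$, the constraint $|\beta|\ge 0$ reads $\ell\le k-j$, and $|\gamma|\ge 1$ excludes $j=k$, so $j$ runs from $0$ to $k-1$. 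A further one-line monotonicity comparison shows that the terms with $\ell<(k-j)/2$, where $|\beta|$ saturates at $\ell$ rather than at $k-j-\ell$, are dominated by terms already present with index $k-j-\ell$. Substituting $|\beta|=k-j-\ell$ into the pointwise bound reproduces the claimed double sum.

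The steps are all routine; the one place requiring genuine care — and where I would expect to slip — is the index bookkeeping in the $|\gamma|\ge 1$ stratum: one must check that passing to $|\beta|+|\gamma|=k-j$ and to the range $\lceil(k-j)/2\rceil\le\ell\le k-j$ genuinely dominates every admissible summand, which is exactly the point at which monotonicity of the energy hierarchy in the number of derivatives is invoked. One must also track the single extra power $\tau^{1/2}$ correctly — it multiplies the whole bound in the $|\gamma|=0$ stratum but is instead carried by the factor $\tau^{1/2}\mathfrak{F}_{|\gamma|-1}$ once $|\gamma|\ge 1$, so that no spurious $\tau$-growth is introduced — and verify the $\tau$-powers against the definitions of $\mathfrak{E}_k$ and $\mathfrak{F}_k$ used in this section. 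Everything else — the weight arithmetic, $\|\mathscr{P}_j\|_{L^\infty}\lesssim\tau^{j}$ on the support, and the appeal to \eqref{Morrey} — is exactly as in the displays preceding the statement.
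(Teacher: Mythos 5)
Your proposal is correct and follows the same route as the paper: the two pointwise bounds you record for the individual summands are exactly those displayed immediately before the proposition (symmetrize to $|\beta|\le|\gamma|$, Morrey on the low-order factor, $\Lebw_{-1}^2$ on the high-order factor, with the $\tau^{j-d}$ weight arithmetic via \eqref{eq:ttau:comp}), and the paper then states the proposition without spelling out the summation, which you correctly carry out by monotonicity of the energy hierarchy. Your index bookkeeping in the $|\gamma|\ge 1$ stratum does check out — every admissible $(j,|\beta|,|\gamma|)$ with $j+|\beta|+|\gamma|\le k$, $|\beta|\le|\gamma|$, $|\gamma|\ge1$ is dominated by the summand with $(j,\ell)$ where $\ell=\max(|\gamma|,\lceil(k-j)/2\rceil)$ — so the step you flagged as the likely slip is in fact sound.
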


Similarly we can analyze the nonlinearity for $\Box \psi$. We split into two cases: first with $|\gamma| \geq |\beta|$, and second with $|\gamma| < |\beta|$. 
In the first case, we have 
\begin{multline*}
	\Bigl\| \mathscr{P}_j \cdot (L^\beta\psi + \partial_t (L^\beta \phi)) \cdot (L^\gamma L^1 \psi + \partial_t(L^\gamma\psi)) \Bigr\|_{\Lebw_{-3}^{2}}\\
	\lesssim \tau^{j-d} \| w_\tau^{d/2 - 1} (L^\beta \psi + \partial_t (L^\beta \phi)) \|_{L^\infty} \| L^\gamma L^1 \psi + \partial_t (L^\gamma\psi) \|_{\Lebw_{-1}^2}
\end{multline*}
which leads us to 
\[ \lesssim \tau^{j-d + 1/2} \left( \mathfrak{F}_{1 + |\beta|}(\tau) + \tau^{-1} \mathfrak{E}_{2 + |\beta|}(\tau) \right) \mathfrak{F}_{|\gamma|}(\tau).
\]
For the second case, we have
\begin{multline*}
	\Bigl\| \mathscr{P}_j \cdot (L^\beta\psi + \partial_t (L^\beta \phi)) \cdot (L^\gamma L^1 \psi + \partial_t(L^\gamma\psi)) \Bigr\|_{\Lebw_{-3}^{2}}\\
	\lesssim \tau^{j-d} \| L^\beta \psi + \partial_t (L^\beta \phi) \|_{\Lebw_{-1}^2} \| w_{\tau}^{d/2 - 1} (L^\gamma L^1 \psi + \partial_t (L^\gamma\psi)) \|_{L^\infty}
\end{multline*}
which leads us to 
\[ \lesssim \tau^{j-d} \left( \tau^{1/2} \mathfrak{F}_{|\beta|-1}(\tau) + \tau^{-1/2} \mathfrak{E}_{|\beta|}(\tau) \right) \mathfrak{F}_{|\gamma|+2}(\tau).
\]
These can be summarized in the following proposition.
\begin{prop}[Estimate for $\Box \psi$] \label{prop:morrey:boxpsi}
	Fix $d = 2$ or $3$, then
	\begin{multline}
		\| \Box \psi\|_{\Sobw_1^{k,2}} \lesssim \sum_{j = 0}^k \sum_{\ell = \lceil (k-j)/2\rceil}^{k-j} \tau^{j - d + 1/2} 
		\left( \mathfrak{F}_{1 + k - j - \ell}(\tau) + \tau^{-1} \mathfrak{E}_{2 + k - j - \ell}(\tau) \right) \mathfrak{F}_{\ell}(\tau)\\
		+ \sum_{j = 0}^{k-1} \sum_{\ell = \lfloor (k-j)/2 \rfloor + 1}^{k - j} \tau^{j-d + 1/2} \left( \mathfrak{F}_{\ell - 1}(\tau) + \tau^{-1} \mathfrak{E}_{\ell}(\tau) \right) \mathfrak{F}_{2 + k - j - \ell}(\tau).
	\end{multline}
\end{prop}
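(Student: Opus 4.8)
The plan is to sum, over all admissible multi-indices, the two per-term estimates already displayed just above the statement, and then re-index the resulting sum; no new analytic input is needed beyond what precedes the proposition.

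First I would start from the second line of \eqref{eq:main:NLest1}, which bounds $\|\Box\psi\|_{\Sobw_1^{k,2}}$ by a sum over $j+|\beta|+|\gamma|\le k$ of the $\Lebw_{-3}^2$-norms of the triple products $\mathscr{P}_j\cdot(L^\beta\psi+\partial_t(L^\beta\phi))\cdot(L^\gamma L^1\psi+\partial_t(L^\gamma\psi))$. For a fixed such term I would split according to whether $|\gamma|\ge|\beta|$ or $|\gamma|<|\beta|$, i.e.\ according to which of the two dynamical factors is lower order, and in both cases apply the same recipe. By H\"older's inequality in $\Lebw_{-3}^2$, place $\mathscr{P}_j$ and the lower-order factor in $L^\infty$ — the latter weighted by $w_\tau^{d/2-1}$ — and the higher-order factor in $\Lebw_{-1-d}^2$; the pointwise bound $|\mathscr{P}_j|\lesssim\tau^j$ and the comparison \eqref{eq:ttau:comp} turn the weight $w_\tau^{-1-d}$ into $\tau^{-2d}w_\tau^{-1}$, so one gains a factor $\tau^{j-d}$ and is left with $\|w_\tau^{d/2-1}(\cdot)\|_{L^\infty}\,\|(\cdot)\|_{\Lebw_{-1}^2}$. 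The Morrey-type inequality \eqref{Morrey} then trades the $L^\infty$ norm for $\tau^{-1/2}\|\cdot\|_{\Sobw_{-1}^{2,2}}$, and the commutator algebra for $[L^i,\partial_t]$ from \cite[Section 3.2]{Planewaves}, together with the definitions of $\mathfrak{E}_\bullet$ and $\mathfrak{F}_\bullet$, converts the remaining $\Sobw_{-1}^{2,2}$- and $\Lebw_{-1}^2$-norms into pieces of the energies. This is exactly the computation in the two displays preceding the statement, and it yields the per-term bounds
\[
\tau^{j-d+1/2}\bigl(\mathfrak{F}_{1+|\beta|}+\tau^{-1}\mathfrak{E}_{2+|\beta|}\bigr)\mathfrak{F}_{|\gamma|}\quad(|\gamma|\ge|\beta|),\qquad \tau^{j-d+1/2}\bigl(\mathfrak{F}_{|\beta|-1}+\tau^{-1}\mathfrak{E}_{|\beta|}\bigr)\mathfrak{F}_{|\gamma|+2}\quad(|\gamma|<|\beta|);
\]
in the second case $|\beta|\ge1$, so $\mathfrak{F}_{|\beta|-1}$ is defined and $j\le k-1$, whereas in the first case $j$ may be as large as $k$.

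Next I would sum these bounds over all admissible $(j,\beta,\gamma)$ and re-index. In the first case I set $\ell=|\gamma|$; since $|\beta|\le k-j-\ell$, monotonicity of $\mathfrak{F}_\bullet$ and $\mathfrak{E}_\bullet$ gives $\mathfrak{F}_{1+|\beta|}+\tau^{-1}\mathfrak{E}_{2+|\beta|}\le \mathfrak{F}_{1+k-j-\ell}+\tau^{-1}\mathfrak{E}_{2+k-j-\ell}$, which is the summand of the first double sum for $\ell\in[\lceil(k-j)/2\rceil,\,k-j]$; for $\ell<\lceil(k-j)/2\rceil$ I instead promote $\ell$ up to $\lceil(k-j)/2\rceil$, which is legitimate because then $|\beta|\le|\gamma|=\ell\le\lceil(k-j)/2\rceil-1\le\lfloor(k-j)/2\rfloor=k-j-\lceil(k-j)/2\rceil$, so both factors remain controlled by quantities carrying the promoted index (and the prefactor $\tau^{j-d+1/2}$ is unchanged). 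The second case is handled identically with $\ell=|\beta|$ (so $|\gamma|\le k-j-\ell$, and the range of $\ell$ becomes $[\lfloor(k-j)/2\rfloor+1,\,k-j]$). Collecting terms produces precisely the two double sums in the statement.

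The step I expect to be the main — and really the only — obstacle is this last one: one must check that after replacing the two free indices by $\ell$ and $k-j-\ell$, the $\lceil\cdot\rceil$ and $\lfloor\cdot\rfloor$ bounds on $\ell$ are exactly what makes the monotonicity promotion available for both factors simultaneously, so that every original term is accounted for and none is lost. The analytic ingredients — the H\"older split, the use of \eqref{Morrey} and \eqref{eq:ttau:comp}, and the passage to $\mathfrak{E}_\bullet,\mathfrak{F}_\bullet$ — are identical to, and no harder than, the corresponding steps in the proof of Proposition \ref{prop:morrey:boxphi}.
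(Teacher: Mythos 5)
Your proposal is correct and follows the same route as the paper: split on which of the two dynamical factors is lower order, place $\mathscr{P}_j$ and the $w_\tau^{d/2-1}$-weighted lower-order factor in $L^\infty$ (via the Morrey estimate \eqref{Morrey} together with \eqref{eq:ttau:comp}) and the higher-order factor in $\Lebw_{-1}^2$, convert to the energies $\mathfrak{E}_\bullet,\mathfrak{F}_\bullet$, then sum. The re-indexing and monotonicity promotion you spell out is exactly the bookkeeping the paper leaves implicit between its two displayed per-term bounds and the proposition, and your arithmetic with $\lceil\cdot\rceil$ and $\lfloor\cdot\rfloor$ is correct.
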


Based on the two propositions above, we can close the bootstrap argument for global existence with polynomially growing energies. More precisely, we have the following two theorems. 
\begin{thm}[$d = 3$ GWP bootstrap using Morrey] \label{thm:d3:morrey}
	Fix $d = 3$ and $\kappa \geq 3$. Assume the initial data satisfies
	\begin{equation}
		\mathfrak{E}_\kappa(2), \mathfrak{F}_\kappa(2) \leq \epsilon 
	\end{equation}
	and that for some $T > 2$, the bootstrap assumptions
	\begin{equation}
		\left\{
			\begin{gathered}
				\mathfrak{E}_0(\tau), \mathfrak{E}_1(\tau), \mathfrak{F}_0(\tau), \mathfrak{F}_1(\tau) \leq \delta \\
				\mathfrak{E}_2(\tau), \mathfrak{F}_2(\tau) \leq \delta \ln(\tau)\\
				\mathfrak{E}_k(\tau), \mathfrak{F}_k(\tau) \leq \delta \tau^{k-2}, \quad 3 \leq k \leq \kappa
			\end{gathered}
		\right.
	\end{equation}
	hold for all $\tau\in [2,T]$. Then there exists some constant $C$ which \emph{depends only on the background $\Upsilon$ and the number of derivatives $\kappa$}, such that the improved estimates
	\begin{equation}
		\left\{
			\begin{gathered}
				\mathfrak{E}_0(\tau), \mathfrak{E}_1(\tau), \mathfrak{F}_0(\tau), \mathfrak{F}_1(\tau) \leq \epsilon + C \delta^{3/2} \\
				\mathfrak{E}_2(\tau), \mathfrak{F}_2(\tau) \leq \epsilon + C \delta^{3/2} \ln(\tau) \\
				\mathfrak{E}_k(\tau), \mathfrak{F}_k(\tau) \leq \epsilon + C \delta^{3/2} \tau^{k-2}, \quad 3 \leq k \leq \kappa
			\end{gathered}
		\right.
	\end{equation}
	hold on $\tau \in [2,T]$. 
\end{thm}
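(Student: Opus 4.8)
The plan is to run a Gr\"onwall-type argument on the energy inequalities \eqref{eq:energest:e}--\eqref{eq:energest:f}, substituting into their right-hand sides the nonlinear source estimates of Propositions \ref{prop:morrey:boxphi} and \ref{prop:morrey:boxpsi} together with the bootstrap hypotheses, all of it carried out for $\tau\in[2,T]$ where those hypotheses are available. The structural reason the argument closes is that the nonlinearities are \emph{quadratic}: every source term carries two powers of the bootstrapped energies, hence a factor $\delta^2$, while the energy inequality has the shape $\mathfrak{E}_k(\tau_1)^2 \le \mathfrak{E}_k(2)^2 + C\int_2^{\tau_1}\tau^{-1/2}\|\Box\phi\|_{\Sobw_1^{k,2}}\,\mathfrak{E}_k\,\D\tau$, contributing one further power of $\delta$. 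After integrating in $\tau$, using $\sqrt{a^2+b}\le a+\sqrt{b}$, and noting $\mathfrak{E}_k(2)\le\mathfrak{E}_\kappa(2)\le\epsilon$, $\mathfrak{F}_k(2)\le\mathfrak{F}_\kappa(2)\le\epsilon$ for $k\le\kappa$ (by monotonicity of the energies in their order), one obtains a bound of exactly the form $\epsilon + C\delta^{3/2}\cdot(\text{bootstrap growth rate})$.

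For the low orders $k=0,1$, feeding the bootstrap bounds (all relevant $\mathfrak{E}_j,\mathfrak{F}_j$ with $j\le1$ being $\lesssim\delta$, and $\mathfrak{E}_2,\mathfrak{F}_2\lesssim\delta\ln\tau$) into Propositions \ref{prop:morrey:boxphi}--\ref{prop:morrey:boxpsi} with $d=3$, the $\tau^{-d}=\tau^{-3}$ weight there produces $\|\Box\phi\|_{\Sobw_1^{k,2}}+\|\Box\psi\|_{\Sobw_1^{k,2}}\lesssim\delta^2\tau^{k-5/2}$. Then $\tau^{-1/2}\cdot\delta^2\tau^{k-5/2}\cdot\mathfrak{E}_k(\tau)\lesssim\delta^3\tau^{2k-5}$ is integrable on $[2,\infty)$ for $k\le1$, so \eqref{eq:energest:e}--\eqref{eq:energest:f} give $\mathfrak{E}_k(\tau_1)^2,\mathfrak{F}_k(\tau_1)^2\le\epsilon^2+C\delta^3$ (the extra $\int\tau^{-1/2}\|\Box\phi\|_{\Lebw_1^2}\mathfrak{E}_0\,\D\tau$ in \eqref{eq:energest:f} is harmless, since $\|\Box\phi\|_{\Lebw_1^2}\lesssim\delta^2\tau^{-5/2}$), and taking square roots closes those orders.

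For the borderline order $k=2$ the source bound is still $\|\Box\phi\|_{\Sobw_1^{2,2}},\|\Box\psi\|_{\Sobw_1^{2,2}}\lesssim\delta^2\tau^{-1/2}$, but now $\tau^{-1/2}\cdot\delta^2\tau^{-1/2}\cdot\mathfrak{E}_2(\tau)\lesssim\delta^3\tau^{-1}\ln\tau$ using $\mathfrak{E}_2\lesssim\delta\ln\tau$, whose $\tau$-integral is $\lesssim\delta^3(\ln\tau_1)^2$, matching the square of the target $\delta^{3/2}\ln\tau$. For $3\le k\le\kappa$, substituting the bootstrap rates into the multi-index sums of Propositions \ref{prop:morrey:boxphi}--\ref{prop:morrey:boxpsi} one checks that the dominant contributions are the ``$\mathfrak{F}_1\mathfrak{E}_0$'' term of Proposition \ref{prop:morrey:boxphi} and the $j=k,\ \ell=0$ term of Proposition \ref{prop:morrey:boxpsi}, both of size $\delta^2\tau^{k-5/2}$, while every other distribution of the $\le k$ available derivatives among the two quadratic factors is strictly lower order in $\tau$. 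Hence $\tau^{-1/2}\cdot\delta^2\tau^{k-5/2}\cdot\mathfrak{E}_k(\tau)\lesssim\delta^3\tau^{2k-5}$ with $\mathfrak{E}_k\lesssim\delta\tau^{k-2}$, and $\int_2^{\tau_1}\tau^{2k-5}\,\D\tau\lesssim\tau_1^{2(k-2)}$, which after square-rooting yields $\mathfrak{E}_k(\tau_1),\mathfrak{F}_k(\tau_1)\le\epsilon+C\delta^{3/2}\tau_1^{k-2}$. Throughout, $C$ depends only on $d=3$, on $\kappa$ (through the number of summands), and on $\Upsilon$ (through the implicit constants in Propositions \ref{prop:morrey:boxphi}--\ref{prop:morrey:boxpsi}); it is $T$-independent since every $\tau$-integral either converges absolutely on $[2,\infty)$ or grows at precisely the rate recorded in the bootstrap assumptions.

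The only substantive work is the $\tau$-power bookkeeping in the last step: one must verify that across the finitely many multi-indices $(j,\ell)$ occurring in Propositions \ref{prop:morrey:boxphi}--\ref{prop:morrey:boxpsi}, the $\tau$-exponent produced by inserting the piecewise bootstrap growth rates (bounded, logarithmic at order $2$, then $\tau^{k-2}$) is maximized by the extreme ``unbalanced'' allocation above, so that the $\tau^{-3}$ decay built into the nonlinearity defeats the polynomial growth of the higher energies. I would carry this out as a short table of exponents indexed by $(j,\ell)$; it is routine but lengthy. This is also exactly where $d=3$ matters: in $d=2$ the analogous weight is only $\tau^{-2}$, the Morrey inequality \eqref{Morrey} loses a derivative, and the estimates no longer close --- which is precisely the motivation for the $L^p$-type interpolation estimates of Section \ref{sect:lin:est}.
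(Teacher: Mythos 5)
Your proposal follows essentially the same route as the paper's proof: feed the bootstrap bounds into Propositions \ref{prop:morrey:boxphi} and \ref{prop:morrey:boxpsi}, check that the resulting source terms decay like $\tau^{-1/2}\|\Box\phi\|_{\Sobw_1^{k,2}}\lesssim\delta^2\tau^{k-3}$ with the dominant contribution being the boundary multi-index, integrate via \eqref{eq:energest:e}--\eqref{eq:energest:f}, and take square roots. (One small arithmetic slip: for $k\le1$ the bootstrap gives $\mathfrak{E}_k\lesssim\delta$, so the integrand is $\delta^3\tau^{k-3}$ rather than $\delta^3\tau^{2k-5}$, but this is still integrable so the conclusion is unaffected; the paper simply carries out the $\max$-exponent bookkeeping that you defer to a ``short table'' explicitly.)
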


\begin{rmk}
	The lower bound $\kappa \geq 3$ is chosen so that between the energy estimates \eqref{eq:energest:e} and \eqref{eq:energest:f}, and the nonlinear estimates Propositions \ref{prop:morrey:boxphi} and \ref{prop:morrey:boxpsi}, we have a closed system. 
\end{rmk}

\begin{proof}
	Applying the bootstrap assumptions to Proposition \ref{prop:morrey:boxphi} we get that 
	\begin{gather*}
		\tau^{-1/2} \|\Box\phi\|_{\Lebw_{1}^2} \lesssim \delta^2 \tau^{-3}, \\
		\tau^{-1/2} \|\Box\phi\|_{\Sobw_{1}^{k,2}} \lesssim \delta^2 \tau^{k-3} + \sum_{j = 0}^{k-1} \sum_{\ell = \lceil (k-j)/2\rceil}^{k-j} \delta^2 \tau^{j-3 + \max(k - j - \ell - 2,0) + \max(\ell - 3,0)} \ln(\tau)^2.
	\end{gather*}
	Observe that
	\begin{multline*} 
		j - 3 + \max( k-j-\ell - 2, 0) + \max(\ell - 3,0) \\
		= \max( k - 8, j + \ell - 6, k-\ell-5, j-3) \leq k - 4
	\end{multline*}
	using that $j \leq k - 1$, and $j + \ell \leq k$, we conclude that for every $k \geq 0$, 
	\begin{equation}
		\tau^{-1/2} \|\Box \phi\|_{\Sobw_{1}^{k,2}} \lesssim \delta^2 \tau^{k-3}.
	\end{equation}
	The improved estimates for $\mathfrak{E}_*$ follows from \eqref{eq:energest:e}. 

	Similarly we can apply the bootstrap assumption to Proposition \ref{prop:morrey:boxpsi} and we get that
	\begin{multline*}
		\tau^{-1/2} \|\Box\psi\|_{\Sobw_1^{k,2}} \lesssim \delta^2 \tau^{k-3} + \sum_{j = 0}^{k-1}\sum_{\ell = \lceil (k-j)/2\rceil}^{k-j} \delta^2 \tau^{j-3} \tau^{\max(k - j - \ell-1,0)} \tau^{\max(\ell - 2,0)} \ln(\tau)^2 \\
		+ \sum_{j = 0}^{k-1} \sum_{\ell = \lfloor (k-j)/2\rfloor + 1}^{k-j} \delta^2 \tau^{j-3} \tau^{\max(\ell - 3,0)} \tau^{\max(k - j - \ell,0)} \ln(\tau)^2.
	\end{multline*}
	Arguing similarly as before we have, for $j \leq k - 1$
	\begin{multline*}
		j - 3 + \max(k - j - \ell - 1, 0) + \max(\ell - 2,0) \\
		= \max(k - 6, k - \ell - 4, j + \ell - 5, j - 3) \leq k - 4.
	\end{multline*}
	and, when $j \leq k - 1$ and $\ell \geq 1$
	\begin{multline*}
		j - 3 + \max(\ell - 3,0) + \max(k - j - \ell,0) \\
		= \max( k - 6, k - \ell - 3, j + \ell - 6, j - 3) \leq k - 4.
	\end{multline*}
	This implies
	\begin{equation}
		\tau^{-1/2} \|\Box \phi\|_{\Sobw_1^{k,2}} \lesssim \delta^2 \tau^{k-3}
	\end{equation}
	and the improved estimates for $\mathfrak{F}_*$ follows from \eqref{eq:energest:f}. 
\end{proof}

The case for $d = 2$ is worse, due to certain interaction terms that appear. 
Specifically, let us consider the estimates first for $\|\Box\psi\|_{\Sobw_{1}^{1,2}}$. From Proposition \ref{prop:morrey:boxpsi} we see 
\begin{multline*} 
	\tau^{-1/2} \|\Box\psi\|_{\Sobw_1^{1,2}} \lesssim \tau^{-2} (\mathfrak{F}_1(\tau) + \tau^{-1} \mathfrak{E}_2(\tau)) \mathfrak{F}_1(\tau)\\
	+ \tau^{-1} (\mathfrak{F}_1(\tau) + \tau^{-1}\mathfrak{E}_2(\tau)) \mathfrak{F}_0(\tau) 
	+ \tau^{-2} (\mathfrak{F}_0(\tau) + \tau^{-1}\mathfrak{E}_1(\tau))\mathfrak{F}_2(\tau).
\end{multline*}
The presence of a term of the form $\tau^{-1} \mathfrak{F}_1(\tau) \mathfrak{F}_0(\tau)$ on the right indicates that the \emph{best} one can hope for in terms of a bound for the energy $\mathfrak{F}_1(\tau)$ is $\delta \tau^\gamma$ for some small $\gamma$. 
The fact that the bound by Proposition \ref{prop:morrey:boxphi} for $\tau^{-1/2} \|\Box\phi\|_{\Sobw_1^{k,2}}$ has a term of the form $\tau^{k - 2} \mathfrak{F}_1(\tau) \mathfrak{E}_0(\tau)$ signals that the best we can hope for $\mathfrak{E}_k(\tau)$ in general is $\delta \tau^{k - 1 + \gamma}$, whenever $k \geq 1$. This is a significantly heavier loss compared to the $d = 3$ case presented above. 

\begin{thm}[$d = 2$ GWP bootstrap using Morrey] \label{thm:d2:morrey}
	Fix $d = 2$ and $\kappa \geq 3$, as well as $\gamma \in (0,1/3)$. Assume the initial data satisfies
	\begin{equation}
		\mathfrak{E}_\kappa(2), \mathfrak{F}_\kappa(2) \leq \epsilon
	\end{equation}
	and that for some $T > 2$, the bootstrap assumptions
	\begin{equation}
		\left\{
			\begin{gathered}
				\mathfrak{E}_0(\tau), \mathfrak{F}_0(\tau) \leq \delta\\
				\mathfrak{E}_k(\tau), \mathfrak{F}_k(\tau) \leq \delta \tau^{k-1 + \gamma}, \quad 1 \leq k \leq \kappa
			\end{gathered}
		\right.
	\end{equation}
	hold for all $\tau\in [2,T]$. Then there exists a constant $C$ depending only on $\kappa, \gamma$ and the initial profile $\Upsilon$, such that the improved estimates
	\begin{equation}
		\left\{
			\begin{gathered}
				\mathfrak{E}_0(\tau), \mathfrak{F}_0(\tau) \leq \epsilon  + C \delta^{3/2}\\
				\mathfrak{E}_k(\tau), \mathfrak{F}_k(\tau) \leq \epsilon + C \delta^{3/2} \tau^{k-1+\gamma}, \quad 1 \leq k \leq \kappa
			\end{gathered}
		\right.
	\end{equation}
	hold on $\tau\in [2,T]$. 
\end{thm}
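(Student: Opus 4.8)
The proof is the standard closure of a continuity argument: one inserts the bootstrap assumptions into the nonlinear estimates of Propositions~\ref{prop:morrey:boxphi} and \ref{prop:morrey:boxpsi} with $d=2$, reads off power--of--$\tau$ bounds for $\tau^{-1/2}\|\Box\phi\|_{\Sobw_1^{k,2}}$ and $\tau^{-1/2}\|\Box\psi\|_{\Sobw_1^{k,2}}$, and then integrates these against the energies via \eqref{eq:energest:e} and \eqref{eq:energest:f}. The only real work is in the bookkeeping of the $\tau$--exponents, much as in the proof of Theorem~\ref{thm:d3:morrey}, but now with the degraded rate $\tau^{k-1+\gamma}$ in the bootstrap.

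The first step is to substitute $\mathfrak{E}_0,\mathfrak{F}_0\lesssim\delta$ and $\mathfrak{E}_k,\mathfrak{F}_k\lesssim\delta\tau^{k-1+\gamma}$ ($1\le k\le\kappa$) into every summand. The elementary observations are that, for $m\ge0$, $\mathfrak{F}_{1+m}+\tau^{-1}\mathfrak{E}_{2+m}\lesssim\delta\tau^{m+\gamma}$; that $\tau^{1/2}\mathfrak{F}_{\ell-1}+\tau^{-1/2}\mathfrak{E}_\ell\lesssim\delta\tau^{\max(\ell-3/2+\gamma,\,1/2)}$; and that $\mathfrak{F}_{\ell}\lesssim\delta\tau^{\max(\ell-1+\gamma,\,0)}$, $\mathfrak{F}_{\ell-1}+\tau^{-1}\mathfrak{E}_\ell\lesssim\delta\tau^{\max(\ell-2+\gamma,\,0)}$. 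Feeding these into Propositions~\ref{prop:morrey:boxphi}--\ref{prop:morrey:boxpsi} and collapsing the double sums (using $j\le k-1$ and $j+\ell\le k$, exactly as in Theorem~\ref{thm:d3:morrey}), I expect to obtain, after dividing by $\tau^{1/2}$,
\[ \tau^{-1/2}\|\Box\phi\|_{\Lebw_1^2},\ \tau^{-1/2}\|\Box\psi\|_{\Lebw_1^2}\ \lesssim\ \delta^2\tau^{-2+\gamma}, \qquad \tau^{-1/2}\|\Box\phi\|_{\Sobw_1^{k,2}},\ \tau^{-1/2}\|\Box\psi\|_{\Sobw_1^{k,2}}\ \lesssim\ \delta^2\tau^{k-2+\gamma}\quad(1\le k\le\kappa). \]
The dominant contribution is precisely the interaction term $\tau^{k-3/2}(\mathfrak{F}_1+\tau^{-1}\mathfrak{E}_2)\mathfrak{E}_0$ in the estimate for $\Box\phi$, and the corresponding $\tau^{k-3/2}(\mathfrak{F}_1+\tau^{-1}\mathfrak{E}_2)\mathfrak{F}_0$ and $\tau^{k-5/2}(\mathfrak{F}_0+\tau^{-1}\mathfrak{E}_1)\mathfrak{F}_2$ pieces for $\Box\psi$ --- the very terms highlighted in the discussion before the statement as forcing the ansatz $\mathfrak{E}_k,\mathfrak{F}_k\lesssim\delta\tau^{k-1+\gamma}$. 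A number of the remaining summands in the $\Box\psi$ bound (those carrying only the low order factors $\mathfrak{F}_0,\mathfrak{F}_1,\mathfrak{E}_1,\mathfrak{E}_2$) are also of this main size; all others are lower order, of size $O(\delta^2\tau^{k-3+2\gamma})$, hence strictly subordinate since $\gamma<1$.

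The second step is to integrate. Plugging the above into \eqref{eq:energest:e} and \eqref{eq:energest:f}: for $k=0$ the integrand is $\lesssim\delta^3\tau^{-2+\gamma}$, which is integrable on $[2,\infty)$ because $\gamma<1$, so $\mathfrak{E}_0(\tau)^2,\mathfrak{F}_0(\tau)^2\lesssim\epsilon^2+\delta^3$, i.e.\ $\mathfrak{E}_0,\mathfrak{F}_0\lesssim\epsilon+\delta^{3/2}$; in \eqref{eq:energest:f} the cross term $\int\tau^{-1/2}\|\Box\phi\|_{\Lebw_1^2}\,\mathfrak{E}_0\,\D\tau$ contributes only $O(\delta^3)$ and is absorbed. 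For $1\le k\le\kappa$ the integrand is $\lesssim\delta^3\tau^{2k-3+2\gamma}$, whose exponent exceeds $-1$ since $\gamma>0$, so the time integral is $\lesssim\delta^3\tau^{2k-2+2\gamma}$ and therefore $\mathfrak{E}_k(\tau)^2,\mathfrak{F}_k(\tau)^2\lesssim\epsilon^2+\delta^3\tau^{2k-2+2\gamma}$, which after taking square roots is the claimed improved bound. Since $\mathfrak{F}_k$ is coupled to $\mathfrak{E}$ only through the harmless $O(\delta^3)$ cross term and through $\mathfrak{E}_{*}$--factors at orders $\le k$ appearing against prefactors that are never larger than in the $\mathfrak{F}$ self-interactions, there is no genuine feedback loop and $\phi,\psi$ can be treated by a single simultaneous induction on the level $k$, running from $k=0$ up to $k=\kappa$.

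The hard part is the first step: there are many interaction terms, and for each one must check that, after inserting the bootstrap bounds and dividing by $\tau^{1/2}$, the resulting power of $\tau$ never exceeds $k-2+\gamma$ (resp.\ $-2+\gamma$ for $k=0$) --- otherwise the subsequent integration would either fail to reproduce the ansatz or would reproduce it only with the constant $\delta$ instead of the improved $\delta^{3/2}$. The role of the restriction on $\gamma$ is exactly to provide slack here: $\gamma>0$ keeps the $k\ge1$ time integrals of polynomial (not logarithmic) type and self-consistent, while $\gamma$ small (the stated $\gamma<1/3$ is ample) guarantees the required strict inequalities at the various borderline exponents, in particular that the $\mathfrak{F}$--type self-interactions in $\Box\psi$, which cost an extra factor $\tau^\gamma$, stay strictly subordinate to the main term. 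With these bounds the improved estimates hold throughout $[2,T]$, which closes the bootstrap.
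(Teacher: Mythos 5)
Your proposal is correct and follows essentially the same route as the paper: insert the bootstrap bounds into Propositions \ref{prop:morrey:boxphi} and \ref{prop:morrey:boxpsi}, verify via the same exponent bookkeeping (with the dominant terms you identify) that $\tau^{-1/2}\|\Box\phi\|_{\Sobw_1^{k,2}}, \tau^{-1/2}\|\Box\psi\|_{\Sobw_1^{k,2}} \lesssim \delta^2\tau^{k-2+\gamma}$, and then integrate in \eqref{eq:energest:e}, \eqref{eq:energest:f} to recover the improved bounds, using $\gamma>0$ for the $k\geq 1$ integrals and $\gamma<1$ for integrability at $k=0$. Your treatment of the cross term in \eqref{eq:energest:f} and of the borderline interaction terms matches the paper's argument.
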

\begin{proof}
	Again we will first apply our bootstrap assumptions to Proposition \ref{prop:morrey:boxphi}. 
	This shows that
	\begin{gather*}
		\tau^{-1/2} \|\Box \phi\|_{\Lebw_1^2} \lesssim \delta^2 \tau^{\gamma-2},\\
		\tau^{-1/2} \|\Box \phi\|_{\Sobw_1^{k,2}} \lesssim \delta^2 \tau^{k-2 + \gamma} + \delta^2 \sum_{j = 0}^{k-1} \sum_{\ell = \lceil (k-j)/2\rceil}^{k-j} \tau^{j-2 + \max(k-j-\ell + \gamma,0) + \max(\ell - 2 + \gamma,0)}.
	\end{gather*}
	Arguing as before
	\begin{multline*}
		j - 2 + \max(k-j-\ell + \gamma,0) + \max(\ell - 2 + \gamma, 0) \\
		= \max(k - 4 + 2\gamma, k - \ell - 2 + \gamma , j + \ell - 4 + \gamma, j - 2) \leq k - 3 + \gamma
	\end{multline*}
	noting that since $j < k$ we have $\ell \geq 1$. So we conclude that
	\[ \tau^{-1/2} \|\Box\phi\|_{\Lebw_1^2} \lesssim \delta^2 \tau^{k + \gamma-2} \]
	and the improved estimates for $\mathfrak{E}_*(\tau)$ follows from \eqref{eq:energest:e}. 

	Similarly from Proposition \ref{prop:morrey:boxpsi} we get
	\begin{multline*}
		\tau^{-1/2} \|\Box\psi\|_{\Sobw_1^{k,2}} \lesssim \delta^2 \sum_{j = 0}^k \sum_{\ell = \lceil (k-j)/2\rceil}^{k-j} \tau^{j-2 + \max(k-j-\ell + \gamma,0) + \max(\ell - 1 + \gamma,0)} \\
		+ \delta^2 \sum_{j = 0}^{k-1} \sum_{\ell = \lfloor (k-j)/2\rfloor + 1}^{k-j} \tau^{j-2 + \max(\ell - 2 + \gamma,0) + \max(1 + k - j - \ell + \gamma,0)}.
	\end{multline*}
	For the first exponent we have
	\begin{multline*}
		j - 2 + \max(k-j-\ell + \gamma,0) + \max(\ell - 1 + \gamma,0) \\
		= \max(k - 3 + 2\gamma, k - \ell - 2 + \gamma, j + \ell - 3 + \gamma, j - 2) \leq k - 2 + \gamma
	\end{multline*}
	using now that $\ell$ may be zero in our case. For the second exponent we have
	\begin{multline*}
		j - 2 + \max(\ell - 2 + \gamma, 0) + \max(1 + k - j - \ell + \gamma, 0) \\
		= \max(k - 3 + 2\gamma, j + \ell - 4 + \gamma, k - 1 - \ell + \gamma, j - 2) \leq k - 2 + \gamma
	\end{multline*}
	using for this second exponent we have a lower bound $\ell \geq 1$. 
	Applying the energy estimate \eqref{eq:energest:e} we have the improved estimates for $\mathfrak{F}_*(\tau)$. 
\end{proof}

\subsection{The bootstrap using Gagliardo-Nirenberg-Sobolev-type estimates}
It turns out that with the aid of the Gagliardo-Nirenberg-Sobolev-type estimates, in $d = 2$ we can rid ourselves of (most of) the $\gamma$ loss, and obtain an energy hierarchy more akin to what is shown in Theorem \ref{thm:d3:morrey} for the $d = 3$ case.
We expect that this improvement will also allow us to close our argument for the original \emph{quasilinear} problem in $d = 2$. 
We note here that the $d = 2$ quasilinear problem has also been treated by Liu and Zhou \cite{LiuZhou2019p} using different methods.
We defer a detailed discussion of the $d = 2$ quasilinear problem to a future manuscript, and focus here on the improvements one can make to the semilinear problem.  

The main improvement of using the Gagliardo-Nirenberg-Sobolev-type inequalities over the Morrey-type inequalities for our application lies in Remark \ref{rmk:morrey}. In $d = 2$, compared to scaling, the Morrey-type inequality \eqref{Morrey} loses \emph{one whole derivative}.
For traditional applications of the vector field method this loss is inconsequential, as energies to all orders are comparable (they are generally always all bounded, with possible the exception of the borderline top order energies). 
In our setting, however, our equation forces us to consider an energy hierarchy with polynomial growth rates. So this loss of one derivative carries a corresponding loss of \emph{decay}, which manifests as the $\tau^\gamma$ loss in the energy hierarchy in Theorem \ref{thm:d2:morrey} compared to Theorem \ref{thm:d3:morrey}. 
This loss can be overcome using the Gagliardo-Nirenberg-Sobolev-type inequalities which are scaling sharp, which converts our losses into merely a $\ln(\tau)$ one.  

In the course of the proof we will need the following simple lemma:
\begin{lem}\label{lem:tech}
	Fix $p > 0$. There exists a constant $C$ (depending on $p$) such that 
	\[ \int_1^\tau \sigma^{p-1} \ln(\sigma)^2 ~\D\sigma \leq C \tau^p \ln(\tau)^2.\]
\end{lem}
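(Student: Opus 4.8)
The plan is to prove the bound by a direct estimate on the integrand followed by an elementary integration, splitting into the two regimes according to the sign of $p$ relative to how $\sigma^{p-1}\ln(\sigma)^2$ behaves. Since $\ln(\sigma)^2$ grows slower than any positive power of $\sigma$, the leading-order contribution to $\int_1^\tau \sigma^{p-1}\ln(\sigma)^2\,\D\sigma$ should come from $\sigma$ near $\tau$, and the factor $\sigma^p$ produced by integrating $\sigma^{p-1}$ will absorb the logarithm with room to spare. Concretely, first I would fix $\epsilon \in (0,p)$ and use the standard fact that there is a constant $C_\epsilon$ with $\ln(\sigma)^2 \leq C_\epsilon \sigma^{\epsilon}$ for all $\sigma \geq 1$. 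Then
\[ \int_1^\tau \sigma^{p-1}\ln(\sigma)^2\,\D\sigma \leq C_\epsilon \int_1^\tau \sigma^{p-1+\epsilon}\,\D\sigma \leq \frac{C_\epsilon}{p+\epsilon}\,\tau^{p+\epsilon},\]
which however only gives $\tau^{p+\epsilon}$ rather than the sharp $\tau^p\ln(\tau)^2$. To recover the sharp statement, instead of the crude power bound I would integrate by parts twice (or simply recall the antiderivative): for $p > 0$,
\[ \int \sigma^{p-1}\ln(\sigma)^2\,\D\sigma = \frac{\sigma^p}{p}\ln(\sigma)^2 - \frac{2}{p}\int \sigma^{p-1}\ln(\sigma)\,\D\sigma = \frac{\sigma^p}{p}\ln(\sigma)^2 - \frac{2\sigma^p}{p^2}\ln(\sigma) + \frac{2\sigma^p}{p^3},\]
so that evaluating between $1$ and $\tau$ gives exactly
\[ \int_1^\tau \sigma^{p-1}\ln(\sigma)^2\,\D\sigma = \frac{\tau^p}{p}\ln(\tau)^2 - \frac{2\tau^p}{p^2}\ln(\tau) + \frac{2\tau^p}{p^3} - \frac{2}{p^3}.\]

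From this explicit expression the claimed bound is immediate: for $\tau \geq 1$ we have $\ln(\tau) \geq 0$, the negative terms only help, and the remaining positive terms are all $\lesssim_p \tau^p\ln(\tau)^2 + \tau^p \leq (1 + \text{const})\,\tau^p(\ln(\tau)^2 + 1)$. To get the clean form $C\tau^p\ln(\tau)^2$ as stated — with no additive $\tau^p$ — one must be slightly careful near $\tau = 1$, where $\ln(\tau)^2$ vanishes; but there the left side also vanishes, and near $\tau=1$ one checks via Taylor expansion that $\int_1^\tau \sigma^{p-1}\ln(\sigma)^2\,\D\sigma = O((\tau-1)^3)$ while $\tau^p\ln(\tau)^2 \sim (\tau-1)^2$, so the ratio stays bounded; for $\tau$ bounded away from $1$, say $\tau \geq 2$, the $\tau^p$ term is absorbed into $\tau^p\ln(\tau)^2$ since $\ln(\tau)^2 \geq \ln(2)^2 > 0$. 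Taking $C$ to be the maximum of the resulting constants over $[1,2]$ and $[2,\infty)$ completes the argument.

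The only mildly delicate point — and the one I would flag as the "main obstacle," though it is quite minor — is the behavior at the lower endpoint $\tau = 1$, where one cannot simply bound $\tau^p \leq \tau^p\ln(\tau)^2$. This is handled by the continuity/Taylor argument above, or alternatively one can sidestep it entirely by observing that in the intended application $\tau \geq 2$ always, so the statement is only ever invoked in the regime where $\ln(\tau)$ is bounded below by a positive constant; in that regime the bound follows trivially from the explicit antiderivative. Everything else is routine integration by parts, and no deep input is needed.
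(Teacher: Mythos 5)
Your proof is correct and takes essentially the same route as the paper: both compute the explicit antiderivative $\frac{\sigma^p}{p}\ln(\sigma)^2 - \frac{2\sigma^p}{p^2}\ln(\sigma) + \frac{2\sigma^p}{p^3}$ and read off the bound, with the paper phrasing this as a differential identity and stating $C = \frac{1}{p} + \frac{2}{p^3}$. (A small simplification you could make: rather than dropping the $-\frac{2\tau^p}{p^2}\ln(\tau)$ term and then patching up the leftover $\tau^p$ via Taylor/continuity near $\tau=1$, note that $\frac{2\tau^p}{p^3} - \frac{2}{p^3} - \frac{2\tau^p}{p^2}\ln(\tau)$ is $\leq 0$ for all $\tau\geq 1$ since it vanishes at $\tau=1$ and has derivative $-\frac{2}{p}\tau^{p-1}\ln(\tau)\leq 0$, so $C = 1/p$ already suffices with no endpoint case analysis.)
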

\begin{proof}
	The lemma follows immediately from the differential identity 
	\[
		\frac{d}{dx} \left( x^p \ln(x)^2 - \frac{2}{p} x^p \ln(x) + \frac{2}{p^2} x^p\right) = p x^{p-1} \ln(x)^2.
	\]
	In fact we can take $C = \frac{1}{p} + \frac{2}{p^3}$. 
\end{proof}

\begin{thm}[$d = 2$ GWP bootstrap using GNS]
	Fix $d = 2$ and $\kappa \geq 2$. Assume the initial data satisfies 
	\begin{equation}
		\mathfrak{E}_\kappa(2), \mathfrak{F}_\kappa(2) \leq \epsilon
	\end{equation}
	and that for some $T > 2$, the bootstrap assumptions 
	\begin{equation}
		\left\{
			\begin{gathered}
				\mathfrak{E}_0(\tau), \mathfrak{F}_0(\tau) \leq \delta\\
				\mathfrak{E}_1(\tau) \leq \delta \ln(\tau)\\
				\mathfrak{F}_1(\tau) \leq \delta \ln(\tau)^2\\
				\mathfrak{E}_k(\tau) \leq \delta \tau^{k - 1}, \qquad 2 \leq k \leq \kappa \\
				\mathfrak{F}_k(\tau) \leq \delta \tau^{k-1} \ln(\tau), \qquad 2 \leq k \leq \kappa
			\end{gathered}
		\right.
	\end{equation}
	hold for all $\tau\in [2,T]$. Then there exists some constant $C$ which depends only on $\kappa$ and the background profile $\Upsilon$, such that the improved estimates
	\begin{equation}
		\left\{
			\begin{gathered}
				\mathfrak{E}_0(\tau), \mathfrak{F}_0(\tau) \leq \epsilon + C\delta^{3/2}\\
				\mathfrak{E}_1(\tau) \leq \epsilon + C\delta^{3/2} \ln(\tau)\\
				\mathfrak{F}_1(\tau) \leq \epsilon + C\delta^{3/2} \ln(\tau)^2\\
				\mathfrak{E}_k(\tau) \leq \epsilon + C\delta^{3/2} \tau^{k - 1}, \qquad 2 \leq k \leq \kappa\\
				\mathfrak{F}_k(\tau) \leq \epsilon + C\delta^{3/2} \tau^{k - 1} \ln(\tau), \qquad 2 \leq k \leq \kappa
			\end{gathered}
		\right.
	\end{equation}
	hold on $\tau\in [2,T]$. 
\end{thm}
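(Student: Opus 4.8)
The plan is to run a continuity (bootstrap) argument of exactly the same shape as the proofs of Theorems \ref{thm:d3:morrey} and \ref{thm:d2:morrey}. Assuming the bootstrap inequalities hold on $[2,T]$, I would re-derive them with $\delta$ replaced by $C\delta^{3/2}$ (plus the $\epsilon$ from the data), whence the standard continuity argument closes. The parts that do not change are the energy identities \eqref{eq:energest:e}--\eqref{eq:energest:f} and the algebraic structure \eqref{eq:main:NLest1} of the inhomogeneities. The only change --- and the entire content of the theorem --- is that $\Box\phi$ and $\Box\psi$ are now controlled using the $d=2$ borderline Gagliardo--Nirenberg--Sobolev inequality \eqref{p2rGNS-waveder} at $r=4$ (a weighted Ladyzhenskaya inequality), together with its obvious counterpart with $\mathfrak{E}_*$ replaced by $\mathfrak{F}_*$ (which follows by feeding the coercivity of $\mathfrak{F}_*$ over the relevant $\Lebw_{-1}^2$-norms into \eqref{eq:pdrGNAWS}), rather than the Morrey inequality \eqref{Morrey}. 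Since in $d=2$ the weighted $\Lebw^4$ Sobolev estimate costs only one derivative while \eqref{Morrey} costs two, every nonlinear bound improves by one derivative; this is what allows $\kappa\geq 2$ instead of $\kappa\geq 3$, and what turns the $\tau^\gamma$ losses of Theorem \ref{thm:d2:morrey} into at worst powers of $\ln\tau$.

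The heart of the argument is to establish $\Lebw^4$-based replacements for Propositions \ref{prop:morrey:boxphi} and \ref{prop:morrey:boxpsi}. For a typical summand of \eqref{eq:main:NLest1}, namely $\bigl\| \mathscr{P}_j\cdot F\cdot G\bigr\|_{\Lebw_{-3}^2}$ with $F,G$ strings of $L^\alpha$-derivatives of $\psi$, $\phi_t$, $L^1\psi$, or $\psi_t$, I would first use the support properties of $\mathscr{P}_j$ and the comparison \eqref{eq:ttau:comp} to replace $|\mathscr{P}_j|\lesssim\tau^j$ and, crucially, to trade powers of $w_\tau$ for powers of $\tau^2$ at will on the relevant region; then Cauchy--Schwarz gives $\bigl\| \mathscr{P}_j FG\bigr\|_{\Lebw_{-3}^2}\lesssim\tau^{j}\bigl\| F\bigr\|_{\Lebw_{-1}^4}\bigl\| G\bigr\|_{\Lebw_{-2}^4}\approx\tau^{j-1/2}\bigl\| F\bigr\|_{\Lebw_{-1}^4}\bigl\| G\bigr\|_{\Lebw_{-1}^4}$. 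Each $\Lebw_{-1}^4$-norm of an $m$-th order derivative is then bounded by a suitable negative power of $\tau$ times $\mathfrak{E}_{m-1}^{1/2}\mathfrak{E}_m^{1/2}$ or $\mathfrak{F}_{m-1}^{1/2}\mathfrak{F}_m^{1/2}$, via \eqref{p2rGNS-waveder} (with $r=4$), its $\mathfrak{F}$-analogue, or the order-zero bound $\bigl\|\psi\bigr\|_{\Lebw_{-1}^4}\lesssim\tau^{1/4}\mathfrak{F}_0$. Distributing the $|\alpha|\leq k$ derivatives between $F$ and $G$ (with the lower-order factor playing the role of $F$), summing over the finitely many admissible triples $(j,\beta,\gamma)$, and inserting the bootstrap assumptions yields $\tau^{-1/2}\|\Box\phi\|_{\Sobw_1^{k,2}}$ and $\tau^{-1/2}\|\Box\psi\|_{\Sobw_1^{k,2}}$ as sums of terms $\lesssim\delta^2\,\tau^{a}(\ln\tau)^{N}$, where each exponent $a$ has to be checked to satisfy $a\leq k-2$ --- rather than $a\leq k-2+\gamma$ as in Theorem \ref{thm:d2:morrey} --- with a logarithm appearing only in the borderline combinations (the $\mathscr{P}_1$-type terms arising from differentiating $\Upsilon$, and the $\mathfrak{F}_1$ self-interaction in $\Box\psi$).

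Given these nonlinear estimates, the rest is routine. I would insert them into \eqref{eq:energest:e}--\eqref{eq:energest:f}, bound the trailing factor $\mathfrak{E}_k$ or $\mathfrak{F}_k$ by its bootstrap ansatz, and integrate in $\tau$; the integrands take the form $\sigma^{p-1}(\ln\sigma)^N$ with $p\geq0$, so Lemma \ref{lem:tech} (together with its trivial variants for lower powers of the logarithm and the elementary $\int_2^\tau\sigma^{-1}(\ln\sigma)^N\,\D\sigma\lesssim(\ln\tau)^{N+1}$) controls them. The result is cubic in $\delta$, giving the claimed $C\delta^{3/2}$ after taking square roots in \eqref{eq:energest:e}--\eqref{eq:energest:f}; the single $\ln\tau$ in the improved bound for $\mathfrak{E}_1$ and the $(\ln\tau)^2$ in that for $\mathfrak{F}_1$ come from integrating the borderline logarithmic terms against $\sigma^{-1}$, while for $k\geq 2$ the exponents are strictly positive and Lemma \ref{lem:tech} absorbs all logarithms into the $\tau^{k-1}$ (resp. $\tau^{k-1}\ln\tau$) growth with no further loss.

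I expect the main obstacle to be precisely the exponent bookkeeping of the second paragraph: since the regularity is pushed to its minimum, one must check that \emph{every} way of splitting the $\leq k$ derivatives between the two nonlinear factors produces a $\tau$-exponent no larger than $k-2$, and this uses the scaling-sharpness of the $\Lebw^4$ interpolation to the hilt --- in particular the extra $\tau^{-1/2}$ coming from $w_\tau\approx\tau^2$ in the Cauchy--Schwarz step is indispensable. One must also carefully isolate the genuinely borderline terms --- above all the quadratic self-interaction $\sim\tau^{-1}\mathfrak{F}_0\mathfrak{F}_1$ and its $\mathscr{P}_1$-weighted cousin inside $\|\Box\psi\|_{\Sobw_1^{1,2}}$ --- which force the logarithmic growth of $\mathfrak{E}_1$ and $\mathfrak{F}_1$ and may require treating the $\mathfrak{E}_1$ and $\mathfrak{F}_1$ estimates together. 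A minor additional point: since $\phi$ admits no Hardy inequality in $d=2$, one should verify that every factor occurring in \eqref{eq:main:NLest1} is either a derivative of $\phi$ of order $\geq 1$, or is $\phi_t$, or a derivative of $\psi$, so that the $\Lebw^4$ estimates above never require control of $\phi$ at order zero --- which is indeed the case for the prolonged system in $(\phi,\psi)$.
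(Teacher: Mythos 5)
Your high-level strategy matches the paper's --- replace the Morrey-type $L^\infty$ bound by the scaling-sharp weighted Ladyzhenskaya inequality \eqref{p2rGNS-waveder} at $r=4$, apply Cauchy--Schwarz to put each quadratic factor in $\Lebw^4_{-1}$, trade $w_\tau\approx\tau^2$ on the support of $\mathscr{P}_j$, and close by integrating in $\tau$. That part is correct. However, there is a genuine gap in the central step, and it is not just bookkeeping.

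The gap is the top-order term. When you estimate $\|\Box\phi\|_{\Sobw_1^{k,2}}$ and $\|\Box\psi\|_{\Sobw_1^{k,2}}$ by putting \emph{both} nonlinear factors in $\Lebw^4_{-1}$, the borderline contribution $j=0$, $|\beta|=0$, $|\gamma|=k$ forces you to apply \eqref{p2rGNS-waveder} to the $|\gamma|=k$-derivative factor. The right-hand side of that inequality then involves $\mathfrak{E}_{k+1}^{1/2}$ (respectively $\mathfrak{F}_{k+1}^{1/2}$): because \eqref{p2rGNS-waveder} reads
\[
\tau^{-1/4}\left(\|u\|_{\mr\Sobw_{-1}^{k+1,4}}+\tau\|u_t\|_{\mr\Sobw_{-1}^{k,4}}\right)\lesssim\mathfrak{E}_k^{1/2}\mathfrak{E}_{k+1}^{1/2},
\]
the $L^4$ bound on a $k$-th order $\partial_t$-derivative always costs one extra commutation. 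At $k=\kappa$ the bootstrap assumptions contain no bound on $\mathfrak{E}_{\kappa+1}$ or $\mathfrak{F}_{\kappa+1}$, so this estimate cannot be closed. This is not a matter of the $\tau$-exponent being slightly too large (the decay is in fact more than sufficient); it is that the energy you need does not exist in the hierarchy. Your proposal explicitly renounces the Morrey inequality (``rather than the Morrey inequality \eqref{Morrey}''), whereas the paper must \emph{keep} it for precisely this term.

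The paper's resolution is a mixed strategy: it proves one version of the nonlinear estimate using pure $L^4$ interpolation, valid for small $k$ (where the bootstrap hierarchy still supplies the $(k+1)$-th energy), and a second version for $k\geq 2$ that splits off the $j=0$, $|\beta|=0$, $|\gamma|=k$ borderline term and estimates \emph{that single term} as in Propositions \ref{prop:morrey:boxphi}--\ref{prop:morrey:boxpsi}, i.e.\ the low-order factor in $L^\infty$ via \eqref{Morrey} (costing only energies up to order $2\leq\kappa$) and the high-order factor in plain $\Lebw^2_{-1}$ (costing no extra order). This yields the terms $\tau^{-2}(\mathfrak{F}_1+\tau^{-1}\mathfrak{E}_2)(\mathfrak{F}_{k-1}+\tau^{-1}\mathfrak{E}_k)$ and $\tau^{-2}(\mathfrak{F}_1+\tau^{-1}\mathfrak{E}_2)\mathfrak{F}_k + \tau^{-2}(\mathfrak{F}_{k-1}+\tau^{-1}\mathfrak{E}_k)\mathfrak{F}_2$ appearing in the large-$k$ bounds \eqref{eq:boxphi:gns:bigk} and \eqref{eq:boxpsi:gns:bigk}. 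All the non-borderline splittings, and all the low-order ($k=0,1$) cases, are then handled exactly as you propose. Without this Morrey fallback for the top-order borderline term, the argument does not close, and this is the one additional idea your proposal is missing. (A minor secondary note: the gain from $L^4$ over $L^\infty$ is closer to three-halves of a derivative than ``one'', but this does not affect the structure of the argument.)
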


\begin{proof}
	The key to our proof is to replace Propositions \ref{prop:morrey:boxphi} and \ref{prop:morrey:boxpsi} using the Gagliardo-Nirenberg-Sobolev inequalities instead of Morrey-type inequalities. 
	Rather naturally, since we have a quadratic term measured in $\Lebw_{-3}^2$ in \eqref{eq:main:NLest1}, we will put each of the quadratic terms in $\Lebw_{-1}^4$ and take advantage of the remaining decaying weights. 
	Recall here \eqref{p2rGNS-waveder}, for which we have set $r = 4$:
	\begin{gather}
		\tau^{-1/4} \left( \|\phi\|_{\mr\Sobw_{-1}^{k+1,4}} + \tau \|\partial_t\phi\|_{\mr\Sobw_{-1}^{k,4}} \right) \lesssim (\mathfrak{E}_k)^{1/2} (\mathfrak{E}_{k+1})^{1/2},\\
		\tau^{-1/4} \left( \|\psi\|_{\mr\Sobw_{-1}^{k+1,4}} + \tau \|\partial_t\psi\|_{\mr\Sobw_{-1}^{k,4}} \right) \lesssim               (\mathfrak{F}_k)^{1/2} (\mathfrak{F}_{k+1})^{1/2},\\
		\tau^{-1/4} \|\psi\|_{\Lebw_{-1}^4} \lesssim \mathfrak{F}_0.
	\end{gather}

	Let us now estimate $\Box\phi$. Returning to \eqref{eq:main:NLest1}, we will assume again that $|\beta| \leq |\gamma|$. The inhomogeneities give (where for convenience of notation we will set $\mathfrak{F}_{-1} \eqdef \mathfrak{F}_0$)
	\begin{multline*}
		\|\mathscr{P}_j w_\tau^{-1} (L^\beta\psi + \partial_tL^\beta\phi)(L^\gamma\psi + \partial_t L^\gamma\psi) \|_{\Lebw_{-1}^2} \\
		\lesssim \tau^{j-2} \bigl( \tau^{1/4} \mathfrak{F}_{|\beta|-1}^{1/2} \mathfrak{F}_{|\beta|}^{1/2} + \tau^{-3/4} \mathfrak{E}_{|\beta|}^{1/2} \mathfrak{E}_{|\beta|+1}^{1/2}\bigr) \bigl(\tau^{1/4} \mathfrak{F}_{|\gamma| - 1}^{1/2} \mathfrak{F}_{|\gamma|}^{1/2} + \tau^{-3/4} \mathfrak{E}_{|\gamma|}^{1/2} \mathfrak{E}_{|\gamma|+1}^{1/2}\bigr).
	\end{multline*}
	When $k$ is small, we can use purely this estimate to get
	\begin{multline}\label{eq:boxphi:gns:smallk}
		\tau^{-1/2} \|\Box\phi\|_{\Sobw_1^{k,2}} \lesssim \sum_{j + \ell \leq k} \tau^{j-2} \bigl(  \mathfrak{F}_{\ell-1}^{1/2} \mathfrak{F}_{\ell}^{1/2} + \tau^{-1} \mathfrak{E}_{\ell}^{1/2} \mathfrak{E}_{\ell+1}^{1/2}\bigr)\cdot\\
		\bigl(\mathfrak{F}_{k - j -\ell - 1}^{1/2} \mathfrak{F}_{k - j - \ell}^{1/2} + \tau^{-1} \mathfrak{E}_{k - j - \ell}^{1/2} \mathfrak{E}_{k - j - \ell+1}^{1/2}\bigr).
	\end{multline}
	We cannot close using only this estimate, as the right hand side depends on energies of order higher than $k$. For large $k$, we will isolate the borderline cases and use \eqref{Morrey} for those terms. This gives
	\begin{multline}\label{eq:boxphi:gns:bigk}
		\tau^{-1/2} \|\Box\phi\|_{\Sobw_1^{k,2}} \lesssim \tau^{-2} (\mathfrak{F}_1 + \tau^{-1}\mathfrak{E}_2)(\mathfrak{F}_{k-1} + \tau^{-1}\mathfrak{E}_k)\\
		+ \sum_{\ell = 1}^{k-1} \tau^{-2}  \bigl(  \mathfrak{F}_{\ell-1}^{1/2} \mathfrak{F}_{\ell}^{1/2} + \tau^{-1} \mathfrak{E}_{\ell}^{1/2} \mathfrak{E}_{\ell+1}^{1/2}\bigr)
		\bigl(\mathfrak{F}_{k  -\ell - 1}^{1/2} \mathfrak{F}_{k  - \ell}^{1/2} + \tau^{-1} \mathfrak{E}_{k - \ell}^{1/2} \mathfrak{E}_{k - \ell+1}^{1/2}\bigr)\\
		+ \sum_{j = 1}^k \sum_{\ell \leq k - j} \tau^{j-2} \bigl(  \mathfrak{F}_{\ell-1}^{1/2} \mathfrak{F}_{\ell}^{1/2} + \tau^{-1} \mathfrak{E}_{\ell}^{1/2} \mathfrak{E}_{\ell+1}^{1/2}\bigr)
		\bigl(\mathfrak{F}_{k - j -\ell - 1}^{1/2} \mathfrak{F}_{k - j - \ell}^{1/2} + \tau^{-1} \mathfrak{E}_{k - j - \ell}^{1/2} \mathfrak{E}_{k - j - \ell+1}^{1/2}\bigr).
	\end{multline}

	Similarly we can estimate $\Box \psi$ starting from \eqref{eq:main:NLest1}. The inhomogeneities give
	\begin{multline*}
		\|\mathscr{P}_j w_\tau^{-1} (L^\beta \psi + \partial_t L^\beta\phi)(L^\gamma L^1\psi + \partial_t L^\gamma \psi) \|_{\Lebw_{-1}^2} \\
		\lesssim \tau^{j-2+1/4} \bigl(\tau^{1/4} \mathfrak{F}_{|\beta|-1}^{1/2}\mathfrak{F}_{|\beta|}^{1/2} + \tau^{-3/4} \mathfrak{E}_{|\beta|}^{1/2} \mathfrak{E}_{|\beta|+1}^{1/2}\bigr)  \mathfrak{F}_{|\gamma|}^{1/2} \mathfrak{F}_{|\gamma|+1}^{1/2}
	\end{multline*}
	For small $k$ this implies the estimate
	\begin{equation}\label{eq:boxpsi:gns:smallk}
		\tau^{-1/2} \|\Box\psi\|_{\Sobw_1^{k,2}} \lesssim \sum_{j + \ell \leq k} \tau^{j-2} \bigl( \mathfrak{F}_{\ell - 1}^{1/2} \mathfrak{F}_{\ell}^{1/2} + \tau^{-1} \mathfrak{E}_{\ell}^{1/2} \mathfrak{E}_{\ell+1}^{1/2} \bigr) \mathfrak{F}_{k - j - \ell}^{1/2} \mathfrak{F}_{k - j - \ell + 1}^{1/2}.
	\end{equation}
	For large $k$ we have to also handle the top-order borderline terms differently, using Morrey. This gives
	\begin{multline}\label{eq:boxpsi:gns:bigk}
		\tau^{-1/2} \|\Box\psi\|_{\Sobw_1^{k,2}} \lesssim \tau^{-2} (\mathfrak{F}_1 + \tau^{-1}\mathfrak{E}_2) \mathfrak{F}_k + \tau^{-2} (\mathfrak{F}_{k-1} + \tau^{-1} \mathfrak{E}_k) \mathfrak{F}_2 \\
		+ \sum_{\ell = 1}^{k-1} \tau^{-2} \bigl( \mathfrak{F}_{\ell - 1}^{1/2} \mathfrak{F}_{\ell}^{1/2} + \tau^{-1} \mathfrak{E}_{\ell}^{1/2} \mathfrak{E}_{\ell+1}^{1/2} \bigr) \mathfrak{F}_{k - \ell}^{1/2} \mathfrak{F}_{k - \ell + 1}^{1/2}\\
		+ \sum_{j = 1}^k \sum_{\ell \leq k - j} \tau^{j-2} \bigl( \mathfrak{F}_{\ell - 1}^{1/2} \mathfrak{F}_{\ell}^{1/2} + \tau^{-1} \mathfrak{E}_{\ell}^{1/2} \mathfrak{E}_{\ell+1}^{1/2} \bigr) \mathfrak{F}_{k - j - \ell}^{1/2} \mathfrak{F}_{k - j - \ell + 1}^{1/2}.
	\end{multline}

	The estimates \eqref{eq:boxphi:gns:smallk}, \eqref{eq:boxphi:gns:bigk}, \eqref{eq:boxpsi:gns:smallk}, \eqref{eq:boxpsi:gns:bigk} together implies we can close the bootstrap using no more than 2 commutations: for $k = 0,1$ we will use the versions for small $k$; and for $k = 2$ we will use the versions for big $k$. 
	We now implement this scheme and plug in our bootstrap assumptions. We treat each of the 6 cases separately. 

	\textbf{The estimates for $\mathfrak{E}_0$.} For $\mathfrak{E}_0$ we will use \eqref{eq:boxphi:gns:smallk}, which gives
	\[ \tau^{-1/2}\|\Box\phi\|_{\Lebw_1^2} \lesssim \tau^{-2} (\mathfrak{F}_0 + \tau^{-1} \mathfrak{E}_0^{1/2} \mathfrak{E}_1^{1/2})^2 \lesssim \delta^2 \tau^{-2} \]
	by the bootstrap assumptions. Hence from \eqref{eq:energest:e} we see that the improved estimate follows. 

	\textbf{The estimates for $\mathfrak{F}_0$.} For $\mathfrak{F}_0$ we will use \eqref{eq:boxpsi:gns:smallk}, which gives 
	\[ \tau^{-1/2} \|\Box\psi\|_{\Lebw_1^2} \lesssim \tau^{-2} (\mathfrak{F}_0 + \tau^{-1} \mathfrak{E}_0^{1/2} \mathfrak{E}_1^{1/2})\mathfrak{F}_0^{1/2} \mathfrak{F}_1^{1/2} \lesssim \delta^2 \tau^{-2} \ln(\tau) \]
	by the bootstrap assumptions. Hence from \eqref{eq:energest:f} we see that the improved estimate follows. 

	\textbf{The estimates for $\mathfrak{E}_1$.} For $\mathfrak{E}_1$, \eqref{eq:boxphi:gns:smallk} gives
	\begin{multline*}
		\tau^{-1/2} \|\Box\phi\|_{\Sobw_1^{1,2}} \lesssim \tau^{-2} (\mathfrak{F}_0 + \tau^{-1} \mathfrak{E}_0^{1/2} \mathfrak{E}_1^{1/2})(\mathfrak{F}_0^{1/2} \mathfrak{F}_1^{1/2} + \tau^{-1} \mathfrak{E}_1^{1/2} \mathfrak{E}_2^{1/2}) \\
		+ \tau^{-1} (\mathfrak{F}_0 + \tau^{-1} \mathfrak{E}_0^{1/2} \mathfrak{E}_1^{1/2})^2 \lesssim \delta^2 \tau^{-2}\ln(\tau) + \delta^2 \tau^{-1}.
	\end{multline*}
	From \eqref{eq:energest:e} we see that 
	\[ \mathfrak{E}_1(\tau)^2 - \epsilon^2 \lesssim \int_2^{\tau} \delta^3 \sigma^{-1} \ln(\sigma)~\D\sigma \lesssim \delta^3 \ln(\tau)^2 \]
	and the improved estimate follows. 

	\textbf{The estimates for $\mathfrak{F}_1$.} From \eqref{eq:boxpsi:gns:smallk} with $k = 1$ we get
	\begin{multline*}
		\tau^{-1/2} \|\Box\psi\|_{\Sobw_1^{1,2}} \lesssim \tau^{-2} (\mathfrak{F}_0 + \tau^{-1} \mathfrak{E}_0^{1/2} \mathfrak{E}_1^{1/2})\mathfrak{F}_1^{1/2} \mathfrak{F}_2^{1/2} \\
		+ \tau^{-2} (\mathfrak{F}_0^{1/2} \mathfrak{F}_1^{1/2} + \tau^{-1}\mathfrak{E}_1^{1/2} \mathfrak{E}_2^{1/2}) \mathfrak{F}_0^{1/2} \mathfrak{F}_1^{1/2} + \tau^{-1} (\mathfrak{F}_0 + \tau^{-1} \mathfrak{E}_0^{1/2} \mathfrak{E}_1^{1/2}) \mathfrak{F}_0^{1/2} \mathfrak{F}_1^{1/2}
	\end{multline*}
	Plugging in the bootstrap assumptions we get
	\[ \lesssim \delta^2 \tau^{-2} \ln(\tau) \tau^{1/2} \ln(\tau)^{1/2} + \delta^2 \tau^{-2} \ln(\tau)^2 + \delta^2 \tau^{-1} \ln(\tau) \lesssim \delta^2 \tau^{-1} \ln(\tau).\]
	This means by \eqref{eq:energest:f} we get
	\[ \mathfrak{F}_1(\tau)^2 - \epsilon^2 \lesssim \int_2^{\tau} \delta^3 \sigma^{-1} \ln(\sigma)^3 ~\D\sigma \lesssim \delta^3 \ln(\tau)^4\]
	and the improved estimate for $\mathfrak{F}_1$ follows. 

	\textbf{The estimates for $\mathfrak{E}_k$, where $k \geq 2$.}
	For the higher order estimates we will use \eqref{eq:boxphi:gns:bigk}. 
	We will use the very rough estimate that 
	\[ \mathfrak{E}_k \leq \delta \tau^{\max(k-1,0)}\ln(\tau), \qquad \mathfrak{F}_k \leq \delta \tau^{\max(k-1,0)}\ln(\tau)^2,\]
	and only be very careful about the cases where $j = k$. 
	This gives
	\begin{multline}
		\tau^{-1/2} \|\Box\phi\|_{\Sobw_1^{k,2}} \lesssim \delta^2 \tau^{-2} \ln(\tau)^2 \tau^{k-2} \ln(\tau)^2 \\
		+ \sum_{\ell = 1}^{k-1} \delta^2 \tau^{-2}  \tau^{\frac12(\max(\ell-2,0) + \max(\ell-1,0))} \ln(\tau)^2 \tau^{\frac12 ( \max(k-\ell - 2,0) + \max(k-\ell-1,0))} \ln(\tau)^2\\
		+ \sum_{j = 1}^{k-1} \sum_{\ell \leq k - j} \delta^2 \tau^{j-2} \tau^{\frac12(\max(\ell-2,0)+\max(\ell-1,0))}\ln(\tau)^2 \tau^{\frac12(\max(k - j - \ell - 2,0) + \max(k -j - \ell - 1,0))} \ln(\tau)^2 \\
		+ \tau^{k-2} (\mathfrak{F}_0 + \tau^{-1} \mathfrak{E}_0^{1/2} \mathfrak{E}_1^{1/2})^2.
	\end{multline}
	We next note (since $k,\ell$ are integers and we restricted $\ell \in [1,k-1]$, and $k \geq 2$)
	\begin{multline*}
		\max(\ell - 2,0) + \max(\ell-1,0) + \max(k - \ell - 2,0) + \max (k - \ell - 1,0) \\
		= \max(2k - 6,2k - 2\ell - 3, 2\ell - 3, 0) \leq 2k-4.
	\end{multline*}
	Similarly (now $j \in [1,k-1]$ and $\ell \in [0,k-j]$)
	\begin{multline*}
		\max(\ell - 2,0) + \max(\ell-1,0) + \max(k-j-\ell -2,0) + \max(k - j - \ell - 1,0) + 2j \\
		= \max( 2k - 2\ell - 3, 2k - 6, 2\ell +2j - 3, 2j) \leq 2k-2.
	\end{multline*}
	So 
	\begin{equation}
		\tau^{-1/2} \|\Box\phi\|_{\Sobw_1^{k,2}} \lesssim \delta^2 \tau^{k-4} \ln(\tau)^4 + \delta^2 \tau^{k-3} \ln(\tau)^4 + \delta^2 \tau^{k-2}.
	\end{equation}
	This implies that in \eqref{eq:energest:e} we see
	\[
		\mathfrak{E}_k(\tau)^2 - \epsilon^2 \lesssim \int_2^\tau \delta^3 \sigma^{2k-3} ~\D\sigma \lesssim \delta^3 \tau^{2k-2}
	\]
	and the improved estimate follows. 

	\textbf{The estimates for $\mathfrak{F}_k$, where $k \geq 2$.}
	Finally we apply \eqref{eq:boxpsi:gns:bigk}. Again when $j < k$ we will estimate very roughly. This gives
	\begin{multline}
		\tau^{-1/2} \|\Box\psi\|_{\Sobw_1^{k,2}} \lesssim \delta^2 \tau^{-2} \ln(\tau)^2 \tau^{k-1} \ln(\tau)^2 + \delta^2 \tau^{-2} \tau^{k-2} \ln(\tau)^2 \tau \ln(\tau) \\
		+ \sum_{\ell = 1}^{k-1}\delta^2 \tau^{-2} \tau^{\max(\ell-3/2,0)} \ln(\tau)^2 \tau^{\max(k - \ell -1/2,0)} \ln(\tau)^2 \\
		+ \sum_{j = 1}^{k-1} \sum_{\ell \leq k - j} \delta^2 \tau^{j-2} \tau^{\max(\ell -3/2,0)} \ln(\tau)^2 \tau^{\max(k - j -\ell - 1/2,0)} \ln(\tau)^2 \\
		+ \tau^{k-2} \bigl( \mathfrak{F}_{ - 1}^{1/2} \mathfrak{F}_{0}^{1/2} + \tau^{-1} \mathfrak{E}_{0}^{1/2} \mathfrak{E}_{1}^{1/2} \bigr) \mathfrak{F}_{0}^{1/2} \mathfrak{F}_{1}^{1/2}.
	\end{multline}
	A similar analysis to before shows that the first three terms can be bounded by $\delta^2 \tau^{k - 5/2} \ln(\tau)^4$. The final term however is bounded by $\delta^2 \tau^{k-2} \ln(\tau)$. Hence inserting into \eqref{eq:energest:f} we see
	\[ \mathfrak{F}_k^2(\tau) - \epsilon^2 \lesssim \int_2^\tau \delta^3 \sigma^{2k-3} \ln(\sigma)^2 ~\D\sigma \lesssim \delta^3 \tau^{2k-2} \ln(\sigma)^2.\]
	In the final inequality we used Lemma \ref{lem:tech}. This implies that the improved estimates are obtained, and our theorem is proved. 
\end{proof}

\begin{rmk}
	Our results are compatible with \emph{boundedness} of generic higher derivatives of the solution. Indeed, using \eqref{Morrey} we see that $\|\partial_t \phi\|_{L^\infty(\Sigma_\tau)} \lesssim \frac{1}{\tau} \mathfrak{E}_2(\tau)$, and $\|L^i \phi\|_{L^\infty(\Sigma_\tau)} \lesssim \mathfrak{E}_2(\tau)$. The latter implies that $\|\partial_{x^i}\phi\|_{L^\infty(\Sigma_\tau)} \lesssim (\frac{1}{t} + \frac{x^i}{t\tau}) \mathfrak{E}_2(\tau)$ which is bounded. Similarly for higher derivatives. On the other hand, we have improved decay for the derivative tangential to the travelling background. Specifically, we have $\|L^1 \phi\|_{L^\infty(\Sigma_\tau)} \lesssim \mathfrak{F}_1(\tau)$. If we were to also analyze the equation satisfied by $\partial_t\phi$, we would find that $|\partial_t \phi + \partial_{x^1}\phi|$ decays like $\ln(\tau)^2 / t$. 

	In particular, our results are compatible with a \emph{lack of peeling}, where all higher derivatives exhibit a ``decay rate'' that is the same as the first derivatives of the free wave equation in dimension 2, \emph{insofar as provable by using only the $\partial_t$ energy}. (Recall from \cite{Wong2017} that to get improved interior decay one should use instead the energy corresponding to the Morawetz $K$ multiplier.) 
	If one were to wish to obtain point-wise decay of the solution and its derivatives in $d = 2$ for this problem (such as what one would need to study the quasilinear problem), one is certainly bound to use the Morawetz energy instead of the $\partial_t$-energy as given above. 
	The linear estimates described in Section \ref{sect:lin:est} remain useful in such a setting, as the Morawetz energy still controls $\Lebw_{-1}^2$ integrals of the solution, just with different $\tau$ weights. 
\end{rmk}

\bibliographystyle{amsalpha}
\bibliography{WM.bib}

\end{document}